\documentclass[a4paper,12pt]{article}
\usepackage{amsfonts}
\usepackage{amsmath}
\usepackage{amssymb}
\usepackage{amsthm}
\usepackage{enumerate}
\usepackage[mathscr]{eucal}
\usepackage{eqlist}
\usepackage{graphicx}
\usepackage{esint}
\usepackage{ulem}
\usepackage{color}
\usepackage[colorlinks=true,linkcolor=red,citecolor=blue]{hyperref}

\usepackage[body={16.4true cm, 26.2true cm}]{geometry}

\newtheorem{Theorem}{Theorem}[section]
\newtheorem{Definition}{Definition}
\newtheorem{Lemma}[Theorem]{Lemma}
\newtheorem{Corollary}[Theorem]{Corollary}
\newtheorem{Remark}[Theorem]{Remark}
\newtheorem{Proposition}[Theorem]{Proposition}
\newtheorem{Example}{Example}
\newtheorem{Conjecture}{Conjecture}
\numberwithin{equation}{section} \allowdisplaybreaks
\usepackage{mathdots}

\begin{document}
\title{Symmetry and critical points of second Neumann eigenfunctions on isosceles trapezoids and kites\footnote{\footnotesize H. Deng is supported by NSFC Program (Grant No.12326303, No.12001276).
C. Gui is supported by NSFC Key Program (Grant No.12531010), University of Macau research grants CPG2024-00016-FST, CPG2025-00032-FST, CPG2026-00027-FST, SRG2023-00011-FST, MYRGGRG2023-00139-FST-UMDF, UMDF Professorial Fellowship of Mathematics, Macao SAR FDCT 0003/2023/RIA1 and Macao SAR FDCT 0024/2023/RIB1.
X. Yang and J. Zou are supported by NSFC Program (Grant No. 12431018). R. Yao is supported by NSFC Program (Grant No. 12671237) and Guangdong Basic and Applied Basic Research Foundation (Grant No. 2025A1515011856).}}

\author{\small Haiyun Deng, Changfeng Gui, Xuyong Jiang, Xiaoping Yang, Ruofei Yao, Jun Zou}
%\date{Received:  / Accepted: }

\date{}
\maketitle
\renewcommand{\labelenumi}{[\arabic{enumi}]}
	
	\begin{abstract}{\bf}{\footnotesize  
        In this paper, we determine the symmetry properties and non-vertex critical points of the second Neumann eigenfunction $u$, as well as the multiplicity of the corresponding eigenvalue, on isosceles trapezoids and kites. By exploiting reflection symmetry, we reduce the problem to a comparison between the second Neumann eigenvalue and the first mixed Dirichlet--Neumann eigenvalue on the half-domain. More precisely, for isosceles trapezoids with base angle $\alpha\leq \frac{\pi}{3}$, the second eigenfunction is antisymmetric. If $\frac{\pi}{3}<\alpha<\frac{\pi}{2}$, there exists a critical height $\hat{h}(\alpha)$ at which the two symmetry branches cross: $u$ is antisymmetric when height $h<\hat{h}(\alpha)$ and symmetric when $h>\hat{h}(\alpha)$, while at $h=\hat{h}(\alpha)$ the second Neumann eigenvalue has multiplicity two. For a convex kite $P_1P_2P_3P_4$, where $P_1=(0,0)$, $P_2=(a,-h)$, $P_3=(1,0)$, and $P_4=(a,h)$, an analogous result holds: there exists a critical height $\tilde{h}(a)$ such that $u$ is symmetric with respect to the $x$-axis when $h<\tilde{h}(a)$ and antisymmetric with respect to the $x$-axis when $h>\tilde{h}(a)$, while at $h=\tilde{h}(a)$ the second Neumann eigenvalue has multiplicity two. In all cases where the second Neumann eigenvalue is simple, we determine all non-vertex critical points of the corresponding eigenfunction, thereby verifying the \textit{hot spots conjecture}.
     }

	\end{abstract}
	
	{\bf Keywords:} Hot spots; Neumann eigenfunctions; Isosceles trapezoids; Kites; Monotonicity
	
	{{\bf 2020 Mathematics Subject Classification.} Primary: 35B38, 35J05; Secondary: 35J25, 58J50.}
	
\section{Introduction}
	~~~~In this paper, we investigate the following eigenvalue problem with Neumann boundary condition 
\begin{equation}\label{1.1}\begin{array}{l}
			\left\{
			\begin{array}{l}
				\Delta u+\mu u=0~~\mbox{in}~~\Omega\subset \mathbb{R}^2,\\
				\frac{\partial u}{\partial n}=0 ~~\mbox{on}~~\partial\Omega,
			\end{array}
			\right.
	\end{array}\end{equation}
where $n$ denotes the unit outward normal vector to $\partial\Omega.$ The corresponding eigenvalues $\mu_j$ are nonnegative and can be arranged as
$$0=\mu_1<\mu_2\le\mu_3\le\cdots\to\infty.$$

In 1974, Rauch \cite{Rauch1974} proposed a conjecture at an academic conference: for the second Neumann eigenfunction $u$ of the boundary value problem \eqref{1.1}, the maximum and minimum values of $u$ are attained only on the boundary of the domain. This problem, known as the hot spot conjecture, inaugurated a half-century of exploration in the mathematical community. For general domains in $\mathbb{R}^n$, the conjecture is false. For instance, even for planar domains with interior holes, Neumann eigenfunctions can attain their maxima inside the region. The conjecture has been proven false for certain planar multiply connected domains \cite{Burdzy1999Ann,Burdzy2005Duke} and convex domains in higher dimensions (for sufficiently large dimension) \cite{deDios2024Arxiv}. To be more precise, the hot spots conjecture regarding second Neumann eigenfunctions is now often stated as follows.

\begin{Conjecture}\label{conj1.1} The second Neumann eigenfunction attains its maximum and minimum only on the boundary
of the domain $\Omega$ provided that  $\Omega$ is a convex domain in $\mathbb{R}^2$, or a simply connected planar domain.
\end{Conjecture}
	
The first positive result was due to Kawohl for cylindrical domains (see \cite[Corollary 2.15]{Kawohl1985} ); he further noted that the conjecture holds for specific domains including parallelepipeds, balls, and annuli (see \cite[page 46]{Kawohl1985}). In 1999, Ba\~{n}uelos and Burdzy achieved the first major progress through probabilistic methods in \cite{Banuelos1999JFA}. For the simple second Neumann eigenvalue, they coupled Brownian motion to the eigenfunction by deforming initial conditions, combined with eigenvalue estimates, to prove that the conjecture holds for various planar convex domains including certain special lip domains, where a ``lip domain" refers to a planar domain situated between the graphs of two Lipschitz functions whose Lipschitz constants are at most one. Subsequently, many important advances have been made in this aspect. In 2000, Jerison and Nadirashvili introduced the method of domain deformation in \cite{Jerison2000JAMS}, proving the hot spots conjecture via a continuity method on planar domains with two axes of symmetry. Their method can be applied to handle more complex situations. For example, in the case of multiple eigenvalues, they proved that the odd eigenfunctions associated with the smallest non-trivial eigenvalue exhibit monotonic properties. Another advance in this field was made by Atar and Burdzy in \cite{Atar2004JAMS}, who proved the simplicity of the second Neumann eigenvalue in lip domains and the hot spots conjecture in such domains. In 2012, an online collaborative project named ``Polymath 7" \cite{Polymath2012} was officially launched with the primary goal of tackling the hot spots conjecture in acute triangles.
In 2022, building on the momentum of the project, Judge and Mondal \cite{Judge2020Ann,Judge2022Ann} proved that second Neumann eigenfunctions on triangles have no interior critical points, thereby establishing the hot-spots conclusion in the triangular setting. Building on this work, in 2026, Chen, Gui, and Yao \cite{Chen2026Invent} obtained a finer classification: they showed that a second Neumann eigenfunction
has at most one non-vertex critical point and established directional monotonicity, together with a sharp description of the extremal locations. For the related results, see \cite{Chen2025AMPA,Hatcher2025JST,Mai2026DCDS} and references therein.

Meanwhile, Judge and Mondal \cite{Judge2022CPDE,Judge2025Que} also studied the critical points of eigenfunctions on polygonal domains. In addition, in 2002, Atar and Burdzy \cite{Atar2002ECP} investigated the location of the nodal lines of the second Neumann eigenfunctions on certain triangular domains. In 2020, Nigam, Siudeja and Young \cite{Nigam2020FCM}  provided a computer-assisted proof of a modified version of Schiffer's conjecture on the regular pentagon, establishing the existence of a nontrivial Neumann eigenfunction that remains strictly positive on the boundary. 

In higher-dimensional domains, the conjecture also holds for balls, annular domains, and parallelepipeds \cite{Kawohl1985,Pütter1992}. To the best of our knowledge, the first published result was obtained by Kawohl in \cite{Kawohl1985} for cylinders using the method of monotone rearrangement. However, Kawohl also pointed out in \cite{Kawohl1985} (Chapter 2, Section 5, Remark 2.36) that this method is difficult to apply to general symmetric convex domains. In 2019, Chen, Li, and Wang \cite{Chen2019JMPA} proved the hot spots conjecture for long rotationally symmetric domains in $\mathbb{R}^n$ by employing the continuity method from \cite{Jerison2000JAMS}. They showed that the second Neumann eigenfunction, which is odd in the $x_n$ variable, is a Morse function on the boundary, has exactly two critical points and is monotone in the direction from its minimum point to its maximum point.

\subsection{Main results}

~~~However, several aspects of the hot spots problem on polygonal domains remain to be understood, particularly the relation between the geometry of a symmetric domain and the symmetry type and critical point structure of its second Neumann eigenfunction. In this paper, we study these questions for two classes of symmetric quadrilaterals: isosceles trapezoids and kites, with non-square rhombuses included as a special case. Our main objective is to determine how the symmetry type of the second Neumann eigenfunction depends on the geometry of the domain and to identify the corresponding distribution of its critical points. In particular, we characterize transitions between symmetric and antisymmetric eigenfunctions through the comparison of the associated spectral branches, determine the corresponding simplicity or multiplicity of the second Neumann eigenvalue, and describe the location of the non-vertex critical points.
 The symbol $Q$ usually denotes an open quadrilateral.  Throughout this paper, we focus on non‑vertex (i.e., nontrivial) critical points, i.e., set 
$$\mathcal{C}_{nv}(u)=\{p\in \overline{Q}:\nabla u=0, p~\mbox{is not a vertex of}~Q\}.$$

For convenience, throughout this paper we assume that the longer base $P_1P_2$ of the trapezoid is placed on the bottom, lying on the $x$‑axis, and the shorter base $P_3P_4$ is on top. The first main result is as follows.

	\begin{Theorem}\label{thm1.1}
	Let $Q_h$ be the isosceles trapezoid $P_1P_2P_3P_4$ with vertices
    \[\quad P_1=(-1,0),\quad P_2=(1,0),\quad P_3=(1-h\cot \alpha,h),\quad P_4=(-1+h\cot \alpha,h).\]
   where the base angle $\alpha\in(0,\pi/2)$ is fixed and $h\in(0,\tan\alpha)$. Let $u$ be the second Neumann eigenfunction on $Q_h$ and $\mu_2$ be the corresponding eigenvalue. Then
    \begin{enumerate}[(1)]
        \item  If  $\alpha\le \frac{\pi}{3}$, then $\mu_2$ is simple and $u$ is
		antisymmetric about the $y$-axis.
        \item If  $\alpha> \frac{\pi}{3}$, then there exists a critical height $\hat{h}(\alpha)$,
		such that:
        \begin{enumerate}[(a)]
         \item if $h<\hat{h}(\alpha)$, then $\mu_2$ is simple and $u$  is antisymmetric about the $y$-axis;
         \item if $h>\hat{h}(\alpha)$, then $\mu_2$ is simple and $u$ is symmetric about the $y$-axis;  \item if  $h=\hat{h}(\alpha)$, then $\mu_2$ has multiplicity exactly two. Its second eigenspace is spanned by one symmetric eigenfunction $u^s$ and one antisymmetric eigenfunction $u^a$.
          \end{enumerate}
    \end{enumerate}
In consequence, if $u$ is antisymmetric, $\mathcal{C}_{nv}(u)=\varnothing$; if $u$ is symmetric, $\mathcal{C}_{nv}(u)$ consists precisely of the midpoints of the two bases. 
	\end{Theorem}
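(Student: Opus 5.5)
We plan to reduce the problem to the right half-trapezoid $Q_h^+=Q_h\cap\{x>0\}$. Every Neumann eigenfunction on $Q_h$ splits into a part symmetric and a part antisymmetric about the $y$-axis. Hence $\mu_2(Q_h)=\min\{\mu^s(h),\mu^a(h)\}$, where
\[
\mu^a(h)=\lambda_1^{DN}(Q_h^+)
\]
is the first mixed eigenvalue with Dirichlet condition on the segment $\{x=0\}\cap Q_h$ and Neumann condition elsewhere, and
\[
\mu^s(h)=\mu_2^{N}(Q_h^+)
\]
is the second Neumann eigenvalue of the half-domain. The first mixed eigenvalue is simple, with a positive eigenfunction. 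The key step is to compare the two branches as functions of $h$.

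\textbf{Monotonicity.} For the antisymmetric branch we will show that $\mu^a(h)$ is strictly increasing in $h$. Heuristically, raising the top shortens the region far from the Dirichlet side. We will prove it by a Hadamard-type domain variation formula, using that the mixed eigenfunction $v>0$ is monotone in $x$; this monotonicity comes from a maximum principle applied to $\partial_x v$ on the half-trapezoid. For the symmetric branch, $\mu^s(h)$ is governed by oscillation in the $y$-direction, and we expect it to be strictly decreasing in $h$.

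\textbf{Limits.} As $h\to0$, $\mu^a\to\pi^2/4$, matching the interval $(-1,1)$, while $\mu^s$ blows up like $\pi^2/h^2$ up to geometry. So for small $h$ the antisymmetric branch wins.

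\textbf{Upper endpoint.} As $h\to\tan\alpha$ the trapezoid degenerates to the isosceles triangle with apex angle $\pi-2\alpha$. By the known result for isosceles triangles (Laugesen--Siudeja type), the second eigenfunction is antisymmetric exactly when the apex angle is at least $\pi/3$, i.e.\ when $\alpha\le\pi/3$, and symmetric when $\alpha>\pi/3$. Continuity of both branches up to this triangle limit then gives the following. For $\alpha\le\pi/3$ we have $\mu^a<\mu^s$ throughout; this needs an extra argument near the equilateral case, where equality occurs at the endpoint, and there we use strict monotonicity. For $\alpha>\pi/3$ the two branches cross, and by strict monotonicity they cross exactly once, at $\hat h(\alpha)$. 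At the crossing the eigenspace is spanned by one symmetric and one antisymmetric eigenfunction. Multiplicity is exactly two because each branch is simple: the first mixed eigenvalue is simple, and simplicity of $\mu_2^N(Q_h^+)$ on the convex half-trapezoid follows from lip-domain type results (Atar--Burdzy), after suitable orientation.

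\textbf{Critical points.} In the antisymmetric case, $u=v$ on $Q_h^+$ with $v>0$. Write $w=\partial_x v$. On the Neumann boundary pieces $w$ has a sign: on the bottom and top, $\partial_y$ of $v$ vanishes, and on the slanted side we argue via the directional derivative. A maximum principle then gives $w>0$ in $Q_h^+$ and on the boundary away from vertices, so there are no non-vertex critical points. In the symmetric case, the half-domain Neumann eigenfunction is monotone in $y$; we would run the Jerison--Nadirashvili or lip-domain argument on $\partial_y u$ to get $\partial_y u\neq0$ except on the bases. On the bases, $\partial_y u=0$ and $\partial_x u$ vanishes only at $x=0$ by symmetry plus monotonicity. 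So the critical points are exactly the two midpoints.

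\textbf{Main obstacle.} I expect the hardest step to be the strict monotonicity of $\mu^s(h)$. It can be circumvented by proving the sign of $\frac{d}{dh}(\mu^a-\mu^s)$ using the explicit eigenfunction monotonicity in the Hadamard formula, together with the endpoint analysis at $\alpha=\pi/3$.
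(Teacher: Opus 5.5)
\textbf{The step that fails is the claim that $\mu^s(h)$ is strictly decreasing on all of $(0,\tan\alpha)$, which you need for ``by strict monotonicity they cross exactly once.''} That claim is false.

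Your small-$h$ picture is where it goes wrong. The symmetric class contains $\cos(\pi x)$, corrected by its tiny mean, and its Rayleigh quotient tends to $\pi^2$. So $\mu^s(h)$ stays bounded as $h\to0$; it does not blow up like $\pi^2/h^2$. For small $h$ the symmetric half-domain eigenfunction looks like $\cos(\pi x)$, whose nodal line runs from the bottom edge to the top edge.

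If $\mu^s$ were decreasing, it would stay at or below $\pi^2$ all the way to the triangle limit. But for small $\alpha$ the flat isosceles triangle has symmetric eigenvalue close to $j_{1,1}^2\approx 14.7>\pi^2$. Its thin-domain limit is $-((1-x)\psi')'=\mu(1-x)\psi$ on $(0,1)$ with $\psi'(0)=0$. So $\mu^s$ genuinely increases for some parameters. Neither the uniqueness of $\hat h(\alpha)$ nor your $\alpha=\pi/3$ argument follows.

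Your fallback does not close the gap either. The Hadamard formula gives $d\mu^s/dh=\int_{\mathrm{top}}u\,\partial_{yy}u\,dx$. Monotonicity $\partial_yu<0$ plus Hopf gives $\partial_{yy}u>0$ on the top edge. But the sign of $u$ there depends on where the nodal line ends, and eigenfunction monotonicity alone does not tell you that.

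\textbf{The missing idea is that you only need monotonicity of $\mu^s$ where $\mu^s\le\mu^a$.} In that regime the symmetric eigenfunction is a genuine second Neumann eigenfunction of $Q_h$, so you can pin down its nodal arc in the half-trapezoid:
\begin{itemize}
\item By Judge--Mondal, its nodal set there is a single arc, and the arc cannot have both ends on one Neumann edge.
\item By Courant, the arc cannot join two adjacent outer edges or the two bases, since reflecting would create at least three nodal domains.
\item The arc cannot join $OC$ to a base, since reflecting would give an arc with both ends on one Neumann edge.
\end{itemize}
Hence the arc joins the slanted edge to the symmetry segment. Then $u$ has one sign on the top edge, and the Hadamard formula gives $d\mu^s/dh<0$.

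Since $\mu^a$ is increasing everywhere, a first-crossing argument then shows that after the first meeting point $\hat h$ one has $\mu^s<\mu^a$ for all $h>\hat h$. This gives at most one crossing for every $\alpha$. For $\alpha=\pi/3$ it makes $\mu^a-\mu^s$ positive and increasing after a hypothetical crossing, which contradicts its zero limit at the equilateral triangle.

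Three smaller corrections:
\begin{itemize}
\item For $\mu^a$, the sign you need in the Hadamard integrand is $\partial_yv<0$, so that $\int_{\mathrm{top}}v\,\partial_{yy}v\,dx>0$. Monotonicity in $x$ does not give this.
\item A bare maximum principle for $\partial_xv$ lacks a boundary sign on the slanted edge, because there $\partial_xv$ is a multiple of the unknown tangential derivative. Take both signs from Jerison--Nadirashvili applied to the doubly reflected convex domain.
\item For small $h$ you need an actual lower bound on $\mu^s$ exceeding $\pi^2/4$, for example a comparison map to a rectangle or a thin-domain limit, since the blow-up you invoke does not occur.
\end{itemize}
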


 \begin{Example}\label{exam1}
Let $\Omega$ be the isosceles trapezoidal domains with the base angle $\alpha$.
We give two examples of $\alpha>\frac{\pi}{3}$ and $\alpha<\frac{\pi}{3}$, respectively.

\noindent\textbf{Case 1 (\(\alpha >\frac{\pi}{3}\)).} In this case, we consider two domains with the same base angle but different heights. More specifically,
let $P_1 = (-1,0), P_2 = (1,0), P_3 = \bigl(\frac{7}{8},2\bigr), P_4 = \bigl(-\frac{7}{8},2\bigr)$ and $Q_1 = (-1,0), Q_2 = (1,0), Q_3 = \bigl(\frac{15}{16},1\bigr), Q_4 = \bigl(-\frac{15}{16},1\bigr)$. By direct calculation, the base angle satisfies $\alpha > \frac{\pi}{3}$. The geometric distribution of level sets of $u$ is shown in Figure~\ref{fig:level_isosceles}.
\begin{figure}[htbp]
\centering
\includegraphics[width=0.6\textwidth]{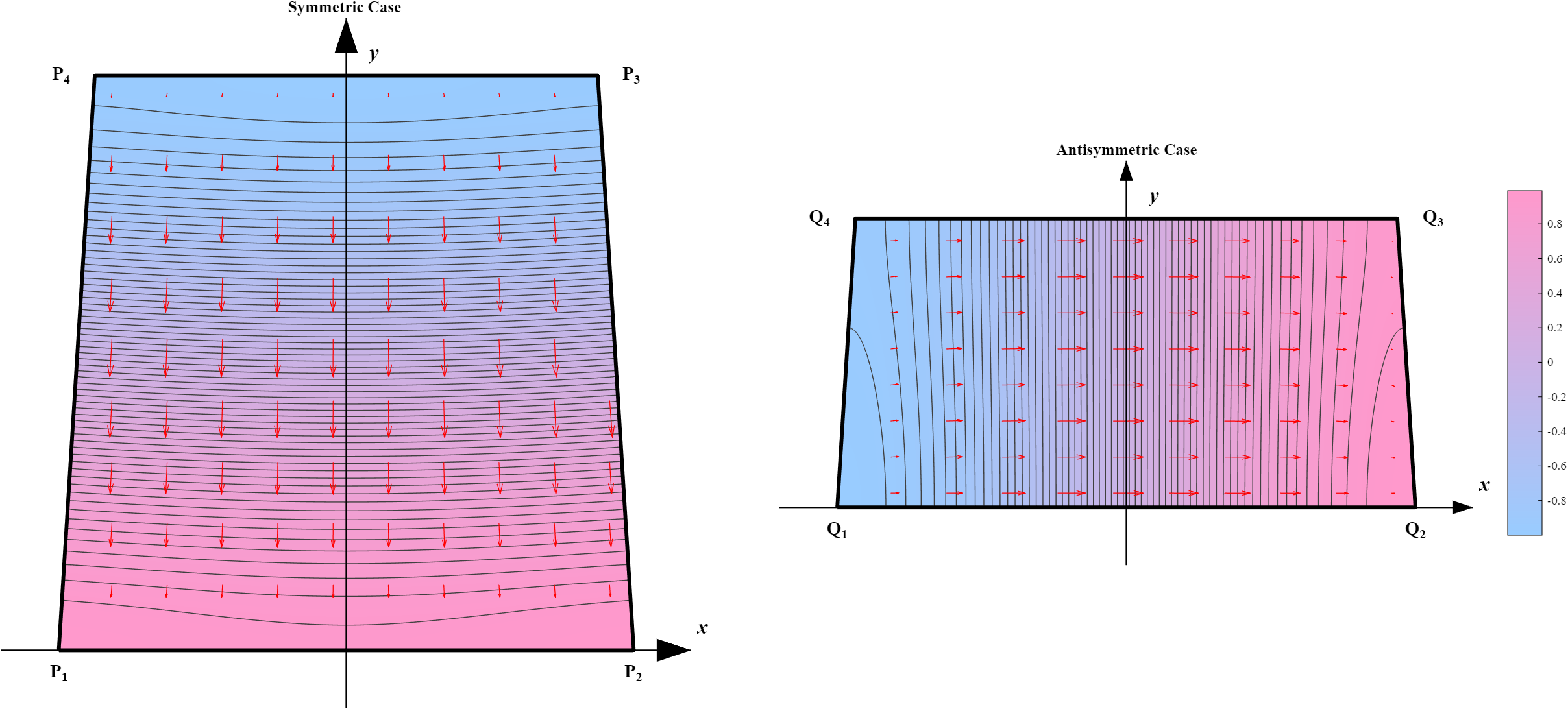}  
\caption{Geometric distribution of level sets of $u$ in isosceles trapezoidal domains.}
\label{fig:level_isosceles}
\end{figure}

\noindent\textbf{Case 2 ($\alpha <\frac{\pi}{3}$).} Similarly, we consider two domains with different heights. Let $P_1 = (-1,0), P_2 = (1,0), P_3 = \bigl(\frac{1}{9},\frac{8\sqrt3}{27}\bigr), P_4 = \bigl(-\frac{1}{9},\frac{8\sqrt3}{27}\bigr)$ and $Q_1 = (-1,0), Q_2 = (1,0), Q_3 = \bigl(\frac{1}{2},\frac{\sqrt{3}}{6}\bigr), Q_4 = \bigl(-\frac{1}{2},\frac{\sqrt{3}}{6}\bigr)$. According to Theorem \ref{thm1.1}, $u$ is antisymmetric about the $y$-axis in both domains. The geometric distribution of level sets of $u$ is shown in Figure~\ref{fig:level_isosceles_2}.
\begin{figure}[htbp]
\centering
\includegraphics[width=0.75\textwidth]{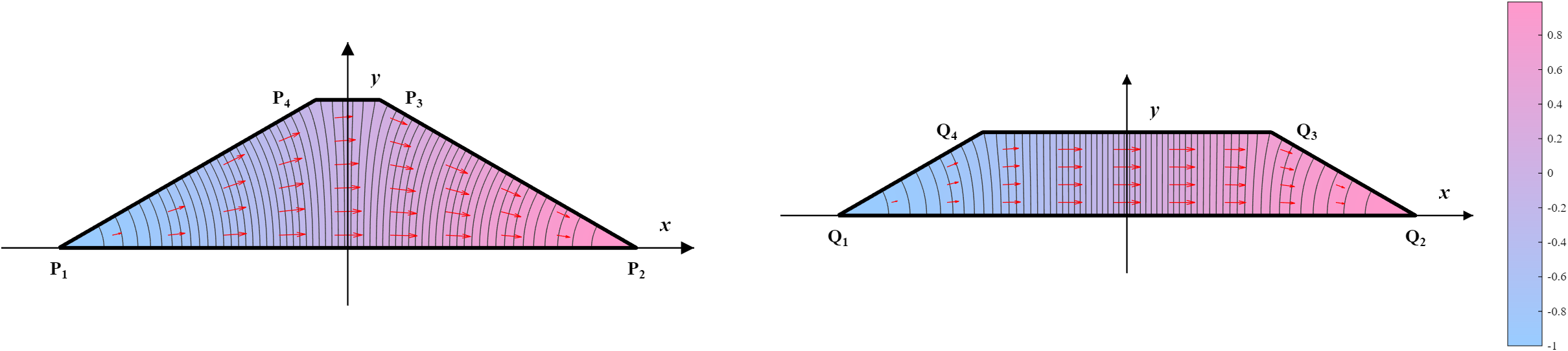}
\caption{Geometric distribution of level sets of $u$ in isosceles trapezoidal domains.}
\label{fig:level_isosceles_2}
\end{figure}
\end{Example}
 
The following results concern the hot spots conjecture on kites.	For $a,h>0$, we define the following polygonal family:
\[
K_{a,h}=P_1P_2P_3P_4, \quad P_1=(0,0),\quad P_2=(a,-h),\quad P_3=(1,0),\quad P_4=(a,h).
\]
It is a convex kite for $0<a<1$; for $a=1$, $P_2,P_3,P_4$ are collinear and the domain becomes the isosceles triangle $P_1P_2P_4$; for $a>1$, $P_3$ is reentrant and the domain is a concave kite.
\begin{Theorem}\label{thm1.2}
Let $u$ be the second Neumann eigenfunction on $K_{a,h}$ and $\mu_2$ be the corresponding eigenvalue, then the following assertions hold for the family $K_{a,h}$.
\begin{enumerate}[(1)]
\item If \(0<a<1\), there exists a critical height \(\widetilde h(a)\) such that \(u\) is symmetric about the \(x\)-axis for \(h<\widetilde h(a)\) and antisymmetric for \(h>\widetilde h(a)\), and, in either case, $\mu_2$ is simple. At \(h=\widetilde h(a)\), $\mu_2$ has multiplicity precisely two, its second eigenspace is spanned by one symmetric eigenfunction $u^s$ and one antisymmetric eigenfunction $u^a$.  In the special case \(a=1/2\), the domain \(K_{a,h}\) reduces to a rhombus, for which \(\widetilde h(1/2)=1/2\).
\item If \(a=1\), the family consists of isosceles triangles, and the critical height is given by \(\widetilde h(1)=1/\sqrt3\), which corresponds to the equilateral case.
\item If $1<a<2$, there exist two constants $h_0(a)$ and $h_1(a)$ ($0<h_0(a)\le h_1(a)$) such that $u$ is symmetric for $0<h<h_0(a)$ and antisymmetric for $h>h_1(a)$.
\item If $a\ge2$, then $\mu_2$ is simple and $u$ is antisymmetric for every $h>0$.
\end{enumerate}
 Consequently, we have the following:
    \begin{itemize}
        \item If $u$ is symmetric about the $x$-axis, then $u$ has non-vertex critical points if and only if triangle $P_1P_3P_4$ is an acute triangle and is non-superequilateral, and they lie in the interiors of shorter outer edges. Here, a superequilateral triangle is an isosceles triangle whose vertex angle is greater than $\pi/3$.

        \item If $u$ is antisymmetric about the $x$-axis,
		then $u$ has non-vertex critical points if and only if triangle $P_1P_3P_4$ is non-isosceles and $\angle P_1P_4P_3$ is obtuse,
		and they lie in the interiors of longer outer edges.
    \end{itemize}
\end{Theorem}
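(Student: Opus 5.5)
The plan is to split the problem along the reflection symmetry $y\mapsto -y$ of $K_{a,h}$. Every Neumann eigenfunction decomposes into an even and an odd part in $y$. Restricted to the upper half $T=K_{a,h}\cap\{y>0\}$, which is the triangle $P_1P_3P_4$, the even part is a Neumann eigenfunction of $T$. The odd part satisfies Dirichlet conditions on the segment $P_1P_3$ and Neumann conditions on $P_1P_4\cup P_4P_3$. Hence
\[
\mu_2(K_{a,h})=\min\{\mu_2^N(T),\ \lambda_1^{DN}(T)\},
\]
where $\lambda_1^{DN}(T)$ is the first mixed eigenvalue, which is simple with a positive eigenfunction. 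The symmetry type of $u$ and the multiplicity of $\mu_2$ are then decided entirely by the sign of
\[
F(a,h)=\mu_2^N(T)-\lambda_1^{DN}(T).
\]
If $F>0$, $u$ is antisymmetric. If $F<0$, $u$ is symmetric. If $F=0$, the eigenspace is spanned by $u^s$ and $u^a$. Multiplicity exactly two, and simplicity when $F\neq0$, additionally need $\mu_2^N(T)$ to be simple. For triangles that are not equilateral I would take this from known results (Siudeja; Judge--Mondal). In the equilateral case the symmetric branch needs a separate check.

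\textbf{Asymptotics in $h$ for fixed $a$.} For each fixed $a$ I would study $F(a,\cdot)$ by scaling.
\begin{itemize}
\item As $h\to0$, $T$ is thin. Then $\mu_2^N(T)$ converges to the first nonzero eigenvalue of a one-dimensional Sturm--Liouville problem with weight equal to the width profile, so it stays bounded. Meanwhile $\lambda_1^{DN}(T)\gtrsim h^{-2}$ by a one-dimensional Poincaré inequality in the vertical direction, with Dirichlet data at $y=0$. Hence $F<0$ and $u$ is symmetric.
\item As $h\to\infty$, rescale $y$. The Dirichlet side $P_1P_3$ becomes short compared with the Neumann sides, so $\lambda_1^{DN}=O(h^{-2})$ with a smaller constant than $\mu_2^N$, which behaves like the first Neumann eigenvalue of a tall triangle, of size about $j_{1,1}^2$ over the diameter squared. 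Hence $F>0$.
\end{itemize}

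\textbf{Uniqueness of the crossing for $0<a\le1$.} The main obstacle is to show that $F(a,\cdot)$ changes sign exactly once. My first attempt would be a Hadamard-type domain variation in $h$: the map $(x,y)\mapsto(x,ty)$ sends $T_h$ to $T_{th}$, giving
\[
\frac{d}{dh}\,\lambda=-\frac{2}{h}\,\frac{\int_T u_y^2}{\int_T u^2}
\]
for each simple branch. I would then show that, at any zero of $F$, the antisymmetric eigenfunction carries a strictly larger share of $y$-energy than the second Neumann eigenfunction of $T$. For the former I would use monotonicity of the positive mixed eigenfunction in $y$, via the Jerison--Nadirashvili style monotonicity the paper adapts. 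For the latter I would use the known monotonicity of second Neumann eigenfunctions of triangles along the longest side. This would give transversality of the crossing and hence uniqueness of $\widetilde h(a)$. A cleaner alternative would be to prove that $h^2\lambda_1^{DN}$ is decreasing and $h^2\mu_2^N$ is increasing; I would test this first.

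\textbf{Special values.} At $a=1/2$, the rhombus with $h=1/2$ is a square, where the eigenvalue is exactly double. At $a=1$, the equilateral triangle has a double $\mu_2$, which gives $\widetilde h(1)=1/\sqrt3$. For $a>1$, $T$ is obtuse at $P_4$. The same asymptotics give $h_0$ and $h_1$, but without a uniqueness argument. For $a\ge2$ I would compare $\lambda_1^{DN}(T)$ via a test function against a lower bound for $\mu_2^N(T)$, for example by the Laugesen--Siudeja triangle bounds or by monotonicity in $a$, to show $F>0$ for all $h$.

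\textbf{Critical points.} For the consequences, a critical point of $u$ restricted to $T$ is determined by the triangle result of Chen--Gui--Yao in the symmetric case, since $u|_T$ is then the second Neumann eigenfunction of $T$. That gives the stated criterion: an acute, non-superequilateral triangle, with critical points on the shorter outer edges. In the antisymmetric case I would carry out a mixed-boundary analogue of the same argument. Monotonicity of $u$ in rotated directions excludes interior critical points and critical points on $P_1P_3$, since $\partial_y u>0$ there by Hopf's lemma. A single critical point on the longer Neumann edge appears exactly when the angle at $P_4$ is obtuse and the triangle is non-isosceles.
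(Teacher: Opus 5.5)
\paragraph{Gap 1: small $h$ for $a>1$, and the case $a\ge2$.} Your small-$h$ step fails for $a>1$, and this is exactly where the threshold $a=2$ in parts (3)--(4) comes from. The vertical Poincar\'e bound $\lambda_1^{DN}(T)\ge\pi^2/(4h^2)$ needs every vertical fibre of $T$ to start on the Dirichlet edge $P_1P_3$. That holds only for $a\le1$; there your argument is fine, and more elementary than the paper's appeal to Siudeja. For $a>1$ the fibres over $1<x<a$ run between the two Neumann edges $P_3P_4$ and $P_1P_4$. In fact $\lambda_1^{DN}(T)\to j_{0,1}^2/(a-1)^2$ stays bounded. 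If your argument held for all $a$, it would make $u$ symmetric for small $h$ when $a\ge2$, contradicting (4).

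What is needed is the thin-domain limit of \emph{both} branches. Under $y=h\eta$ they converge to weighted one-dimensional problems whose weight is the vertical thickness of $T$, and these are solved by $J_0$. This gives $\lambda_0=j_{0,1}^2/(a-1)^2$, and $\mu_0$ equal to the first positive root of $J_0(\sqrt\mu)J_1(\sqrt\mu(a-1))+J_1(\sqrt\mu)J_0(\sqrt\mu(a-1))=0$. Monotonicity of $J_0/J_1$ on $(0,j_{1,1})$ then shows $\mu_0<\lambda_0$ if and only if $a<2$.

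For $a\ge2$, comparing via unspecified bounds is not an argument. The missing observation is that $a^2+h^2>1$ and $(a-1)^2+h^2>1$ for every $h>0$. So $P_1P_3$ is the shortest side of $T$, and the known inequality $\lambda^S(T)<\mu_2(T)$ (Siudeja; Chen--Gui--Yao) gives $F>0$ for all $h$. The same shortest-side inequality settles large $h$ for every $a$ once $h^2>\max\{0,2a-a^2,1-a^2\}$. Your vertical rescaling is correct in the limit ($h^2\lambda_1^{DN}\to j_{0,1}^2$, $h^2\mu_2^N\to j_{1,1}^2$), but it would need its own convergence proof.

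\paragraph{Gap 2: uniqueness of the crossing for $0<a<1$.} Your reduction to comparing $y$-energies at a crossing is the paper's. However, monotonicity of the eigenfunctions only gives signs of derivatives, not an inequality between $\int_T(\partial_y\nu)^2$ and $\int_T(\partial_y\phi)^2$. Your fallback is false: your own derivative formula gives $\frac{d}{dh}(h^2\lambda_h)=2h\int_T(\partial_x\nu_h)^2>0$, and likewise for $h^2\mu_h$, so both rescaled branches increase.

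The missing idea is an upper bound on $E=\int_T(\partial_y\phi)^2$ that exploits the crossing itself.
\begin{itemize}
\item At $\Lambda=\mu_h=\lambda_h$, the form $\mathcal Q(v,v)=\int_T|\nabla v|^2-\Lambda\int_Tv^2$ is nonnegative on $\{v\in H^1(T):v=0\text{ on }P_1P_3\}$.
\item Both $f=y\phi$ and $g=\partial_y\phi$ lie in this space; for $g$ this follows from the Neumann condition on the base.
\item Integration by parts gives $\mathcal Q(f,f)=1$ and $\mathcal Q(f,g)=-2E-\mathcal Q(g,g)/\Lambda$.
\item Cauchy--Schwarz for $\mathcal Q$ then yields $E\le\Lambda/8$.
\item The test function $y-h/3$ gives $\Lambda=\mu_h\le18/h^2$, hence $E\le 9/(4h^2)<\pi^2/(4h^2)\le\int_T(\partial_y\nu)^2$. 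The last inequality is your own vertical Poincar\'e bound.
\end{itemize}

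\paragraph{Smaller points.} In the symmetric critical-point case you must exclude that the shortest side of $T$ is the symmetry edge $P_1P_3$. If it were, then $\lambda^{P_1P_3}(T)<\mu_2(T)$, contradicting symmetry. Also, for $a>1$ the obtuse angle of $T$ is at $P_3$, not at $P_4$.
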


\begin{Example} %\label{exam3}
Let $K$ be the kite $P_1P_2P_3P_4$, where $P_1$ is the origin, $P_2=(a,-h)$ lies in the fourth quadrant, $P_3=(1,0)$ lies on the $x$-axis, and $P_4=(a,h)$ lies in the first quadrant and is the reflection of $P_2$ across the $x$-axis.

We consider three ranges of $a$ and, in each case, present two kites with different heights $h$.\\
    \noindent\textbf{Case 1 ($0<a<1$).}
Let $a=0.75$. In this case, we consider two kites of heights $0.65$ and 0.5, respectively. The geometric distribution of level sets of the second Neumann eigenfunctions $u$ is shown in Figure~\ref{fig:level_Kite_1}.

\begin{figure}[htbp]
\centering
\includegraphics[width=0.7\textwidth]{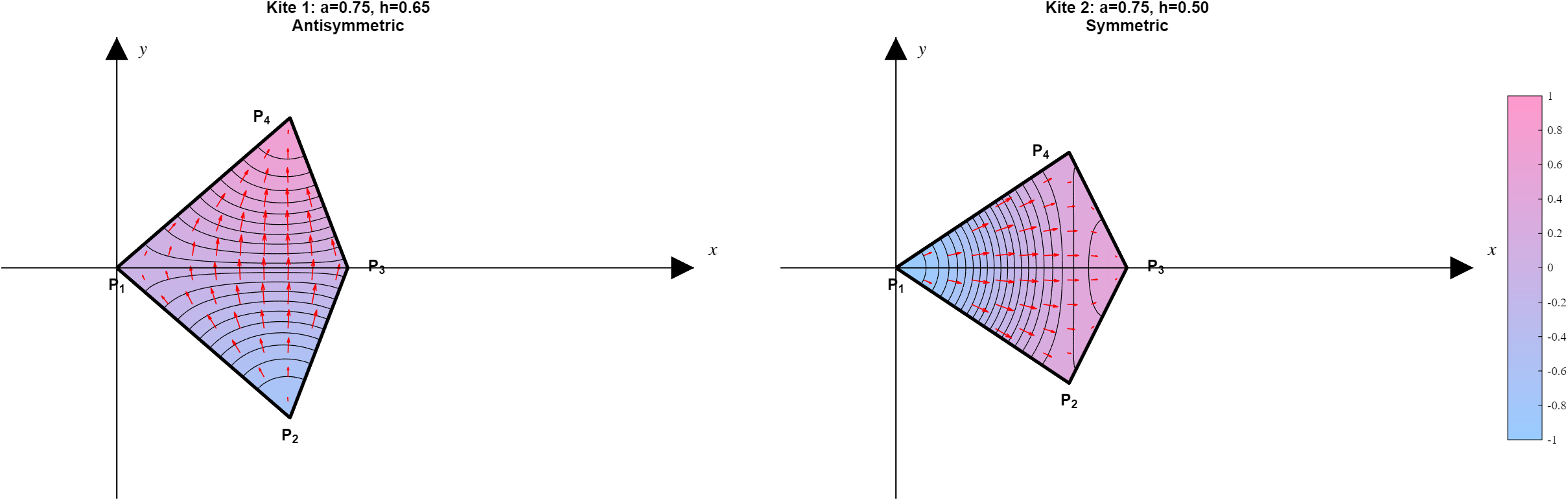}
\caption{Geometric distribution of level sets of $u$ in the kites.}
\label{fig:level_Kite_1}
\end{figure}

\noindent\textbf{Case 2 ($1<a<2$).}
Let $a=1.90$. In this case, we consider two kites of heights $0.65$ and 0.18, respectively. The geometric distribution of level sets of the second Neumann eigenfunctions $u$ is shown in Figure~\ref{fig:level_Kite_2}.

\begin{figure}[h]
\centering
\includegraphics[width=0.7\textwidth]{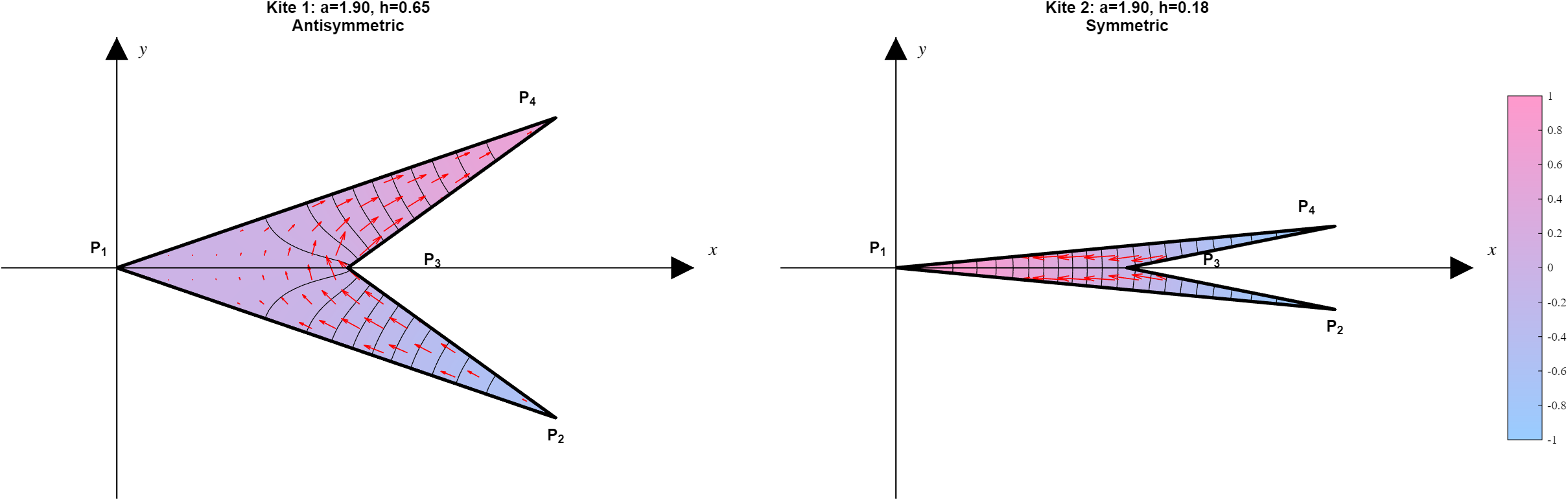}
\caption{Geometric distribution of level sets of $u$ in the kites.}
\label{fig:level_Kite_2}
\end{figure}
\noindent\textbf{Case 3 ($a>2$).}
Let $a=2.10$. In this case, we consider two kites of heights $0.65$ and 0.5, respectively. The geometric distribution of level sets of the second Neumann eigenfunctions $u$ is shown in Figure~\ref{fig:level_Kite_3}.

\begin{figure}[h]
\centering
\includegraphics[width=0.7\textwidth]{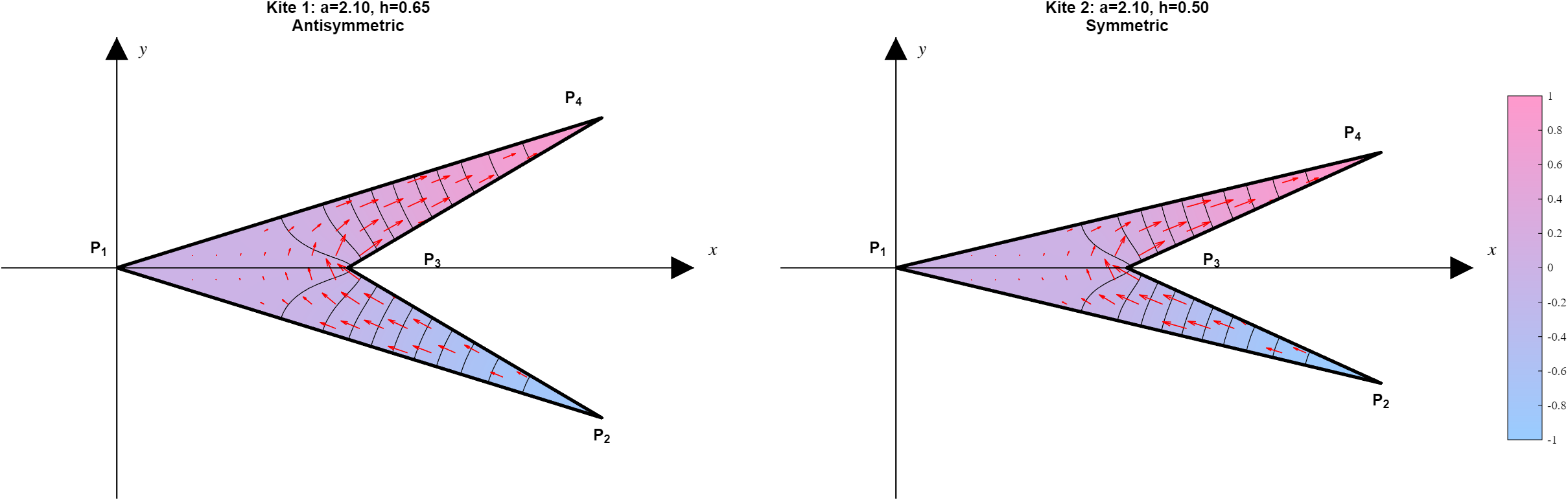}
\caption{Geometric distribution of level sets of $u$ in the kites.}
\label{fig:level_Kite_3}
\end{figure}
\end{Example}

\subsection{Sketch of the proofs}
~~~The main idea of the paper is to interpret the change of symmetry of the
second Neumann eigenfunction as a crossing of two spectral branches.
The proof strategy consists of two main steps.

{\bf 1. Symmetry decomposition}.
By the reflection symmetry of isosceles trapezoids and kites, the second
Neumann eigenvalue can be characterized as
\[
    \min\{\mu^s,\mu^a\},
\]
where $\mu^s$ and $\mu^a$ denote the lowest positive eigenvalues in the
symmetric and antisymmetric classes, respectively. Thus the symmetry
classification is reduced to determining the ordering of these two
eigenvalue branches.

{\bf 2. Comparison of eigenvalues}.
We establish suitable monotonicity properties of the two branches with
respect to the height and determine their relative ordering in the
small height and large height cases. The arguments differ substantially
between isosceles trapezoids and kites.

(1) For isosceles trapezoids $Q_h$, explicit estimates give
$\mu_h^a<\mu_h^s$ for sufficiently small $h$, while the limiting
isosceles triangle determines the ordering as $h\to\tan\alpha$.
Using the nodal structure of the relevant second Neumann eigenfunction,
together with the monotonicity results on right trapezoids, we show that
$\mu_h^a$ is strictly increasing with respect to $h$, while $\mu_h^s$ is strictly decreasing after the first intersection of the two branches $\mu_h^s$ and $\mu_h^a$. Hence the two branches can cross at most once,
which yields the symmetry transition and the multiplicity statement in
Theorem~\ref{thm1.1}.

(2) For kites $K_{a,h}$, the situation is essentially different.
Restricting $K_{a,h}$ to its upper half gives a triangle $T_h$.
The symmetric branch is the second Neumann eigenvalue $\mu_h$ of $T_h$,
whereas the antisymmetric branch is its first mixed eigenvalue
$\lambda_h$. Unlike the isosceles trapezoid case, both branches are
strictly decreasing with respect to $h$, so monotonicity alone does not
exclude repeated crossings. We therefore combine the small height and
large height comparisons with a thin domain analysis. The latter reduces
the problem to weighted one-dimensional limit problems and reveals the
threshold $a=2$.

For convex kites $0<a<1$, a transversality estimate shows that, at every
crossing,
\[
    \lambda_h'<\mu_h'.
\]
Thus all crossings have the same orientation. Together with the opposite
orderings for small and large $h$, this yields the unique transition
height in Theorem~\ref{thm1.2}. For $1<a<2$, the endpoint comparisons
still imply a change of ordering, whereas for $a\geq2$ the antisymmetric
branch remains below the symmetric branch for all $h>0$.

Finally, once the symmetry type is determined, the critical point statements follow from the corresponding Neumann and mixed problems on
the half-domains, together with reflection symmetry. The main ingredients
are therefore the eigenvalue monotonicity on right trapezoids, the
thin domain analysis for kites, and the transversality argument for convex kites.

	\subsection{ Outline of the paper}
	~~~~~~The paper is organized as follows. In Section 2, we recall Serrin's comparison principle, BNV principal eigenvalue  theorem, and other basic results needed in the proof of the main results. 
   Section 3 develops the main auxiliary results for right trapezoids, including the directional monotonicity of the relevant Neumann and mixed eigenfunctions, as well as the monotonicity of the associated eigenvalues with respect to the height. These results provide the main tools for the analysis of isosceles trapezoids.  Among others,  we will prove Theorem \ref{thm1.1} in Section 4 and Theorem \ref{thm1.2} in Section 5.

	\section{Preliminaries}
	~~~~In this section,  we will first recall some facts about the maximum principle. Throughout the paper, we assume that $\Omega$ is a bounded and connected domain in $\mathbb{R}^{2}.$ We use weak solutions whenever a polygonal corner prevents classical regularity up to the whole boundary. Let $L$ be a uniformly second order elliptic operator,  defined by
	\begin{equation}\label{2.1}
		Lu=\sum_{i,j=1}^n a_{ij}(x)u_{x_ix_j}+\sum_{i=1}^n b_{i}(x)u_{x_i}+c(x)u,
	\end{equation}
	where $a_{ij}=a_{ji}, a_{ij}\in C(\overline{\Omega}), b_i,c\in L^{\infty}(\Omega),$ and $\sum_{i,j=1}^n a_{ij}(x)\xi_i\xi_j\geq \vartheta|\xi|^2$ for any $x\in \Omega, \xi\in \mathbb{R}^n$ and some positive constant $\vartheta.$ Now let $\lambda_1(L,\Omega)$ be the first eigenvalue of the above elliptic operator $L$ in $\Omega$, given by
	$$\lambda_1(L,\Omega)=\sup \{\lambda:\exists u>0~\mbox{such~that}~(L+\lambda)u\leq 0\},$$
	where $u\in W^{2,n}_{loc}(\Omega)\cap C(\overline{\Omega}).$ 
	
	We will need the following generalized versions of the maximum principle, BNV principal eigenvalue theorem, and Hopf lemma.
	\begin{Lemma}\label{lem2.1}
		(Serrin's comparison principle, see \cite{Serrin1971ARMA}) Assume that $u\in C^2(\Omega)\cap C(\overline{\Omega})$ satisfies $Lu\geq 0,$ where operator $L$ is defined as (\ref{2.1}). If $u\leq 0$ in $\Omega$, then either $u<0$ in $\Omega$ or $u\equiv 0$ in $\Omega$. In particular, if $w$ satisfies $(\Delta+\lambda)w=0$ in a bounded and connected domain $\Omega$, and $w\ge0$ in $\Omega$, then either $w\equiv0$ or $w>0$ in $\Omega$.
	\end{Lemma}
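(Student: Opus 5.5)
This is the classical strong maximum principle for non-divergence operators, with no sign condition on $c$. It becomes available because we assume $u\le 0$. I will reduce to the case $c\le 0$ and then run the E. Hopf argument.

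\textbf{Step 1 (reduction).} Write $c=c^+-c^-$. Since $u\le 0$, we have $c^+u\le 0$. Hence
\[
L_0u:=\sum a_{ij}u_{x_ix_j}+\sum b_iu_{x_i}-c^-u \;\ge\; -c^+u \;\ge\; 0 .
\]
So $u$ is a subsolution for an operator $L_0$ whose zeroth-order coefficient $-c^-\le 0$ is bounded. From now on we may assume $c\le 0$.

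\textbf{Step 2 (Hopf's boundary point lemma).} Let $B=B_R(y)\Subset\Omega$ with $u<0$ in $B$ and $u(x_0)=0$ for some $x_0\in\partial B$. Take the barrier
\[
v(x)=e^{-\gamma|x-y|^2}-e^{-\gamma R^2}
\]
on the annulus $A=B\setminus \overline{B_{R/2}(y)}$. Using uniform ellipticity $\vartheta$ and the $L^\infty$ bounds on $b_i$ and $c$, one checks that $L_0v>0$ in $A$ once $\gamma$ is large. Choose $\varepsilon>0$ with $u+\varepsilon v\le 0$ on $\partial B_{R/2}(y)$; this is possible since $u<0$ there by compactness. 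Also $v=0$ on $\partial B$.

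The weak maximum principle for $c\le 0$ then gives $u+\varepsilon v\le 0$ in $A$. That principle is proved by contradiction at an interior positive maximum, where $L_0(u+\varepsilon v)>0$ is violated. Since $u+\varepsilon v$ vanishes at $x_0$, this forces
\[
\partial_\nu u(x_0)\ge -\varepsilon\,\partial_\nu v(x_0)>0 .
\]
On the other hand, $x_0$ is an interior point of $\Omega$ where $u$ attains its maximum value $0$, so $\nabla u(x_0)=0$. This is a contradiction. Here the $C^2(\Omega)$ regularity is used pointwise.

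\textbf{Step 3 (connectedness).} Let $Z=\{u=0\}$, which is closed in $\Omega$. Suppose $Z\neq\varnothing$ and $\Omega\setminus Z\neq\varnothing$. Pick $y\in\Omega\setminus Z$ closer to $Z$ than to $\partial\Omega$. Enlarge a ball around $y$ until it first touches $Z$; this gives $B\subset\Omega\setminus Z$ with a point $x_0\in\partial B\cap Z$, which is exactly the configuration excluded in Step 2. So either $Z=\varnothing$, meaning $u<0$ in $\Omega$, or $Z=\Omega$, meaning $u\equiv 0$.

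\textbf{The ``in particular'' statement.} Apply the result to $u=-w$ and $L=\Delta+\lambda$. Here $c=\lambda$ is constant and of arbitrary sign, which is permitted by Step 1. Interior smoothness of $w$ follows from elliptic regularity.

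\textbf{Main obstacle.} The only delicate point is the sign-free $c$. The barrier computation in Step 2 needs $c\le 0$, and Step 1 supplies this only because of the a priori sign $u\le 0$. Everything else is the standard argument.
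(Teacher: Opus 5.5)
Your proof is correct, and it is the classical argument behind the result that the paper only cites from Serrin without proving: you absorb $c^+u\le 0$ to reduce to $c\le 0$, then use the exponential barrier on an annulus and the interior-ball touching argument, as in Gilbarg--Trudinger, Lemma~3.4 and Theorem~3.5. Two steps you leave implicit are sound: a point $y$ with $\operatorname{dist}(y,Z)<\operatorname{dist}(y,\partial\Omega)$ exists because connectedness prevents the relatively closed set $Z$ from being open, and the hypothesis $u\in C(\overline{\Omega})$ is never used, which is why interior regularity of $w$ is enough for the ``in particular'' part.
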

	
	\begin{Lemma}\label{lem2.2}
		(BNV principal eigenvalue theorem, see \cite{BerestyckiVaradhan1994}) 
        For a bounded connected domain, the maximum principle holds in $\Omega$ if and only if $\lambda_1(L,\Omega)>0.$  In particular, if $\lambda<\lambda_1(L,\Omega)$ and
\[
(\Delta+\lambda)w\ge0\quad\text{in }\Omega,\qquad w\le0\quad\text{on }\partial\Omega,
\]
 then $w\le0$ in $\Omega$.
	\end{Lemma}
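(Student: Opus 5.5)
The statement just quoted is Lemma~\ref{lem2.2}, the BNV principal eigenvalue theorem. The equivalence ``maximum principle $\iff \lambda_1(L,\Omega)>0$'' is the Berestycki--Nirenberg--Varadhan result, which we take from \cite{BerestyckiVaradhan1994}. What we actually prove is the ``in particular'' consequence for $L=\Delta+\lambda$. The plan is to rewrite it as a statement about the shifted operator and then use the characterization of $\lambda_1$.

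\textbf{Step 1: reduce to the shifted operator.} Set $L_\lambda=\Delta+\lambda$. From the definition of $\lambda_1$ as a supremum,
\[
\lambda_1(L_\lambda,\Omega)=\sup\{\nu:\exists\,\phi>0,\ (\Delta+\lambda+\nu)\phi\le0\}=\lambda_1(\Delta,\Omega)-\lambda .
\]
Hence the hypothesis $\lambda<\lambda_1(\Delta,\Omega)$, where $\lambda_1(\Delta,\Omega)$ is the Dirichlet principal eigenvalue, gives $\lambda_1(L_\lambda,\Omega)>0$. By the BNV equivalence, $L_\lambda$ then satisfies the maximum principle in the BNV sense: if $L_\lambda w\ge0$ in $\Omega$ and $\limsup w\le0$ at $\partial\Omega$, then $w\le0$ in $\Omega$.

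\textbf{Step 2: a direct argument as a check.} To make Step~1 concrete and to confirm the regularity class, I would also argue directly. Since $\lambda<\lambda_1$, choose $\lambda<\lambda'<\lambda_1$. Using the definition of $\lambda_1$ and the interior positivity from Lemma~\ref{lem2.1}, take $\phi>0$ on $\overline\Omega$ with $(\Delta+\lambda')\phi\le0$. On a polygon one can take $\phi$ to be the principal Dirichlet eigenfunction of a slightly larger domain, which is positive on $\overline\Omega$. Write $w=\phi v$. Then $v$ satisfies
\[
\Delta v+2\nabla\log\phi\cdot\nabla v+\frac{(\Delta+\lambda)\phi}{\phi}\,v\ge0 ,
\]
and the zeroth-order coefficient is at most $\lambda-\lambda'<0$. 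If $v$ had a positive interior maximum, then at that point $\Delta v\le0$, $\nabla v=0$ and the zeroth-order term is negative, a contradiction. Since $v\le0$ on $\partial\Omega$, we conclude $v\le0$, and therefore $w\le0$.

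\textbf{Main obstacle.} The delicate point is producing a strictly positive $\phi$ up to the boundary, for example near polygonal corners. The domain-enlargement trick combined with continuity of the Dirichlet eigenvalue under domain perturbation resolves this. Only an interior $W^{2,n}_{loc}$ computation is needed, which is the regularity class assumed in the definition of $\lambda_1(L,\Omega)$.
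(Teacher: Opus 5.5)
Your Step~1 follows the paper's route: the paper gives no argument of its own and simply invokes Berestycki--Nirenberg--Varadhan, and your shift identity $\lambda_1(\Delta+\lambda,\Omega)=\lambda_1(\Delta,\Omega)-\lambda$ is exactly the implicit bridge from that equivalence to the ``in particular'' clause, with $w\in C(\overline\Omega)$, $w\le0$ on $\partial\Omega$ meeting the BNV boundary hypothesis. Your Step~2 is an optional Protter--Weinberger-type check, and it is more limited than Step~1: building $\phi$ from a slightly larger domain works only where the Dirichlet eigenvalue is continuous under exterior enlargement (polygons, Lipschitz domains), and the pointwise maximum argument needs $w\in C^2$ (as for the eigenfunctions used in Lemma~\ref{lem2.3}) or Bony's principle/ABP, not bare $W^{2,n}_{loc}$ regularity.
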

    
We denote $\mu_2(\Delta,\Omega)$  and $\lambda_1(\Delta,\Omega)$ by the second Neumann eigenvalue and the first Dirichlet eigenvalue of the Laplace operator $\Delta$ on domain $\Omega$ respectively. Since  $\mu_2(\Delta,\Omega)<\lambda_1(\Delta,\Omega)$ (see \cite[Theorem 4.1]{Rohleder2025}), by Lemma  \ref{lem2.2}, we have the following refined maximum principle.   
\begin{Lemma}\label{lem2.3}
		(Refined maximum principle) Suppose that $u$ satisfies $\Delta u+\mu_2 u=0~\mbox{in}~\Omega$ with $u\leq 0~\mbox{on}~\partial\Omega$, then $u\leq 0$ in $\Omega$.
	\end{Lemma}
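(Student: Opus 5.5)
The plan is to deduce Lemma~\ref{lem2.3} directly from Lemma~\ref{lem2.2}, applied with $L=\Delta$ and $\lambda=\mu_2=\mu_2(\Delta,\Omega)$. The only input needed is the strict inequality
\[
\mu_2(\Delta,\Omega)<\lambda_1(\Delta,\Omega),
\]
quoted from \cite[Theorem 4.1]{Rohleder2025}. We also need to identify $\lambda_1(\Delta,\Omega)$ with the principal eigenvalue $\lambda_1(L,\Omega)$ in the sense of the sup-definition given before Lemma~\ref{lem2.1}.

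\textbf{Step 1: identifying the principal eigenvalue.} Let $\phi_1>0$ be the first Dirichlet eigenfunction, with $(\Delta+\lambda_1(\Delta,\Omega))\phi_1=0$. Then $\phi_1$ is an admissible positive supersolution for $\lambda=\lambda_1(\Delta,\Omega)$, which gives $\lambda_1(L,\Omega)\ge\lambda_1(\Delta,\Omega)$. For bounded domains, Berestycki--Nirenberg--Varadhan show that this is in fact an equality. For polygons, which are Lipschitz and hence regular, this is the classical Dirichlet eigenvalue. Actually only the inequality $\lambda_1(L,\Omega)\ge\lambda_1(\Delta,\Omega)$ is needed below.

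\textbf{Step 2: applying Lemma~\ref{lem2.2}.} Let $u$ satisfy $\Delta u+\mu_2u=0$ in $\Omega$ and $u\le0$ on $\partial\Omega$. Set $w=u$. Then $(\Delta+\mu_2)w=0\ge0$ in $\Omega$ and $w\le0$ on $\partial\Omega$. Since $\mu_2<\lambda_1(\Delta,\Omega)\le\lambda_1(L,\Omega)$, the second part of Lemma~\ref{lem2.2} yields $w\le0$ in $\Omega$, which is the claim.

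\textbf{Main obstacle: regularity at the corners.} The point requiring care is that Lemma~\ref{lem2.2} is stated for $w\in W^{2,n}_{loc}(\Omega)\cap C(\overline\Omega)$. Interior regularity is automatic, since $u$ is real-analytic in $\Omega$. Continuity up to the boundary, including at the corners of a polygon, follows from standard elliptic regularity of Neumann or mixed eigenfunctions on polygons, or can simply be assumed whenever the lemma is used.

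If one prefers to avoid BNV altogether, there is an alternative variational route. Test the equation with $u^+\in H^1_0(\Omega)$, which is legitimate because $u\le0$ on $\partial\Omega$. This gives
\[
\int|\nabla u^+|^2=\mu_2\int (u^+)^2 .
\]
If $u^+\not\equiv0$, this contradicts $\int|\nabla u^+|^2\ge\lambda_1\int(u^+)^2>\mu_2\int(u^+)^2$. This argument is also short and relies on the same key input, the strict inequality $\mu_2<\lambda_1$.
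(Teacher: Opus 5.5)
Your proposal is correct and follows the paper's argument. The paper quotes $\mu_2(\Delta,\Omega)<\lambda_1(\Delta,\Omega)$ from \cite[Theorem 4.1]{Rohleder2025} and applies Lemma~\ref{lem2.2} with $\lambda=\mu_2$; your Step 1 (using the positive Dirichlet eigenfunction as a supersolution, which gives $\lambda_1(L,\Omega)\ge\lambda_1(\Delta,\Omega)$) just makes explicit an identification the paper leaves implicit, and your variational alternative also works once you justify testing with $u^+$ by first testing with the compactly supported $(u-\varepsilon)^+$ and letting $\varepsilon\to0$.
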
	

    \begin{Lemma}\label{lem2.4}
		(Hopf Lemma, see \cite{Gilbarg1983}) Suppose that $u$ satisfies $\Delta u+\mu_2 u=0~\mbox{in}~\Omega$,  if $u(x_0)=0$ for $x_0\in\partial\Omega$ and $u(x)<u(x_0)$ for any $x\in\Omega$, for each outward direction $\nu$ such that $\nu\cdot n(x_0)>0$,  then $\frac{\partial u}{\partial\nu }(x_0)>0$, where $n$ is the unit outward normal vector on the boundary.
	\end{Lemma}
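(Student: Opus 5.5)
This is the standard Hopf boundary point lemma specialized to $L=\Delta+\mu_2$. The only complication is the zeroth-order term $\mu_2 u$, which has the wrong sign for the classical statement. I would remove it by a positive multiplier and then reduce to the interior-ball version of the lemma from \cite{Gilbarg1983}. Throughout I assume, as is implicit in the statement, that $x_0$ is a smooth point of $\partial\Omega$: either it lies in the interior of an edge of the polygon, or $\Omega$ satisfies an interior ball condition at $x_0$. I also assume $u\in C^1$ up to $x_0$, which holds away from the vertices.

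\textbf{Step 1: interior ball.} At such an $x_0$ I choose a small ball $B=B_r(y)\subset\Omega$ whose closure meets $\partial\Omega$ only at $x_0$. On a flat edge I take $y=x_0-r\,n(x_0)$ with $r$ small.

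\textbf{Step 2: removing the zeroth-order term.} Let $w>0$ on $\overline B$ be a positive solution of $\Delta w+\mu_2 w=0$ there. For instance, take $w(x)=J_0(\sqrt{\mu_2}\,|x-z|)$ with $z$ chosen so that $\overline B$ stays within distance $j_{0,1}/\sqrt{\mu_2}$ of $z$, which is possible once $r$ is small. Alternatively, take $w=\cos(\sqrt{\mu_2}\,(x-y)\cdot e)$ for a fixed unit vector $e$, with $r<\pi/(2\sqrt{\mu_2})$. Set $v=u/w$. A direct computation gives
\[
\Delta v+2\,\frac{\nabla w}{w}\cdot\nabla v=0\quad\text{in }B,
\]
an elliptic equation with bounded drift and no zeroth-order term. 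Since $u<0=u(x_0)$ in $\Omega$ and $w>0$, we get $v<0$ in $B$ and $v(x_0)=0$, so $x_0$ is a strict boundary maximum of $v$ on $\overline B$.

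\textbf{Step 3: the barrier.} I apply the classical Hopf lemma to $v$ on $B$ in the form of \cite[Lemma 3.4]{Gilbarg1983}. Concretely, on the annulus $A=B\setminus \overline{B_{r/2}(y)}$ I use the barrier $z(x)=e^{-\beta|x-y|^2}-e^{-\beta r^2}$ with $\beta$ large, so that $\Delta z+2\frac{\nabla w}{w}\cdot\nabla z\ge0$ in $A$. Since $v<0$ on the compact inner sphere $\partial B_{r/2}(y)$, there is an $\varepsilon>0$ with $v+\varepsilon z\le 0$ on $\partial A$. The weak maximum principle (Lemma \ref{lem2.2} in the case of no zeroth-order term) then gives $v+\varepsilon z\le0$ in $A$.

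Differentiating at $x_0$ yields $\partial_n v(x_0)\ge -\varepsilon\,\partial_n z(x_0)>0$. Moreover, since $v\le -\varepsilon z$ in $A$, for any direction $\nu$ with $\nu\cdot n(x_0)>0$ the points $x_0-t\nu$ lie in $A$ for small $t$. Hence $\partial_\nu v(x_0)\ge-\varepsilon\,\partial_\nu z(x_0)=2\varepsilon\beta r e^{-\beta r^2}\,\nu\cdot n(x_0)>0$.

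\textbf{Step 4: back to $u$.} Finally, $\nabla u=w\nabla v+v\nabla w$, and $v(x_0)=0$, so $\partial_\nu u(x_0)=w(x_0)\partial_\nu v(x_0)>0$.

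The main obstacle is Step 2. Because $\mu_2>0$, the zeroth-order coefficient has the wrong sign for the plain Hopf lemma, so one must check that a positive solution $w$ exists on a neighbourhood of $\overline B$ with $\nabla w/w$ bounded. This is exactly why the ball must be taken small, i.e. of radius below the first Dirichlet radius for $\mu_2$. The regularity needed to differentiate $u$ at $x_0$ is a secondary point, and is automatic at non-vertex boundary points.
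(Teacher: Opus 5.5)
Your proof is correct. The paper gives no proof of its own and simply cites \cite{Gilbarg1983} (Lemma 3.4 there). Your Step 3 is exactly the barrier argument of that lemma. The difference lies in how you dispose of the zeroth-order term.

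\textbf{Your route.} You introduce a local positive solution $w$ of $\Delta w+\mu_2 w=0$ and pass to $v=u/w$. This forces the ball to be smaller than the first Dirichlet radius for $\mu_2$, and you must carry the drift $2\nabla w/w$ through the barrier computation.

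\textbf{The cited route.} Gilbarg--Trudinger handle the case $u(x_0)=0$ ``irrespective of the sign of $c$'' by a one-line sign argument. Since $u<0$ in $\Omega$, one has $(L-c^+)u=Lu-c^+u\ge 0$, and $L-c^+$ has nonpositive zeroth-order coefficient. For $L=\Delta+\mu_2$ this just says $\Delta u=-\mu_2u>0$ in $\Omega$. So $u$ is subharmonic, and the plain $c=0$ Hopf lemma for $\Delta$ applies to $u$ on any interior ball at $x_0$, with no multiplier and no restriction on the radius.

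\textbf{Comparison.} What you flag as the main obstacle is therefore not really one under the hypothesis $u<u(x_0)=0$. Your multiplier device is the standard generalized maximum principle (Protter--Weinberger). It is the right tool when you need a maximum or comparison principle for $\Delta+\mu_2$ on small sets and $u$ has no sign. For this Hopf statement, where the sign of $u$ is part of the hypothesis, the sign argument is shorter and more general.

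Two minor remarks:
\begin{itemize}
\item You use the letter $z$ both for the centre of the Bessel function and for the barrier; rename one.
\item The weak maximum principle for $\Delta+b\cdot\nabla$ with bounded $b$ can be quoted directly (Gilbarg--Trudinger, Theorem 3.1) rather than through Lemma \ref{lem2.2}.
\end{itemize}
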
	
\noindent We use this Hopf lemma only at relative interior points of edges, never at polygonal vertices.

\section{Right trapezoids case}
~~~~In this section, we study second Neumann eigenfunctions, as well as  first eigenfunctions of the mixed boundary value problem, on right trapezoids. We investigate the non-existence of
non-vertex critical points and establish directional monotonicity of the eigenfunctions and
height monotonicity of the relevant eigenvalues.

We first recall the definition of lip domain, which plays a key role in the following analysis.
\begin{Definition}\label{def1}
		(\cite{Atar2004JAMS}) A planar set  $D$ is called a lip domain if it is Lipschitz, open, bounded, connected, and given by
    $$D=\{(x_1,x_2):f_1(x_1)<x_2<f_2(x_1)\},$$
where $f_1,f_2$ are functions with Lipschitz constants at most 1.
\end{Definition}

The following lemma gives the key properties of the second Neumann eigenfunction on lip domains.

\begin{Lemma} \label{lem.lip} (\cite{Atar2004JAMS,Rohleder2023}) 
	Assume that $\Omega$ is a lip domain with a piecewise $C^\infty$ boundary
	whose corners are convex. Then
    \begin{enumerate}[(1)]
        \item If $\Omega$ is not a square, then the second Neumann eigenvalue $\mu$ is simple.
        \item If $\Omega$ is not a rectangle, then the second Neumann eigenfunction,
		possibly after a sign change, satisfies that the directional derivatives in the directions of
		$e_1 + e_2$ and $e_1 - e_2$ are positive inside $\Omega$.
    \end{enumerate}
	where $e_1$ and $e_2$ are the standard basis vectors in $\mathbb{R}^2$.
\end{Lemma}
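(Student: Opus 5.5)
Since this lemma is quoted from \cite{Atar2004JAMS,Rohleder2023}, I would follow the analytic route of \cite{Rohleder2023} rather than the probabilistic coupling of \cite{Atar2004JAMS}. The idea is to work with the two directional derivatives
\[
w_1=\partial_{e_1+e_2}u,\qquad w_2=\partial_{e_1-e_2}u,
\]
so that $\nabla u=\tfrac12(w_1+w_2,\,w_1-w_2)$. Each $w_i$ satisfies $\Delta w_i+\mu w_i=0$ in $\Omega$. The Neumann condition $n\cdot\nabla u=0$ on a smooth boundary piece becomes
\[
(n_1+n_2)\,w_1+(n_1-n_2)\,w_2=0 .
\]
The lip hypothesis enters exactly here. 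On the upper graph $x_2=f_2(x_1)$ with $|f_2'|\le1$ we have $n_2\ge|n_1|$, so $n_1+n_2\ge0\ge n_1-n_2$. On the lower graph all signs are reversed. In either case the boundary relation forces $w_1$ and $w_2$ to have the \emph{same} sign on $\partial\Omega$, up to degenerate pieces where one coefficient vanishes, i.e. boundary slopes $\pm1$. Convexity of the corners gives enough regularity, $u\in H^2$ near vertices, for $(w_1,w_2)$ to lie in $H^1$.

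The key step is a variational characterization. For vector fields $W=(W_1,W_2)\in H^1(\Omega)^2$ satisfying the boundary constraint above, I would consider the quadratic form $a(W)=\int_\Omega|\nabla W_1|^2+|\nabla W_2|^2$ plus the boundary curvature terms that appear when $W$ is a gradient. On polygons these terms vanish on the flat pieces, and on curved pieces they have a sign controlled by convexity. I would show that the infimum of $a(W)/\|W\|^2$ over this class equals $\mu_2$. The inequality "$\le$" comes from inserting $W=(w_1,w_2)$ for a second eigenfunction, after checking via an integration by parts that $\int|\nabla\nabla u|^2=\mu_2\int|\nabla u|^2$ up to the boundary terms. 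The inequality "$\ge$" follows from the standard fact that every $\mu_j$ with $j\ge2$ is an eigenvalue of this vector problem, together with the observation that the constant mode is excluded.

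Next comes positivity. If $W$ is a minimizer, then $(|W_1|,|W_2|)$ is still admissible: the boundary condition only links two quantities of the same sign, so taking absolute values keeps the relation, possibly after replacing $W$ by a suitable sign combination. It also has the same energy, so it is again a minimizer and hence solves $\Delta|W_i|+\mu_2|W_i|=0$. By Lemma~\ref{lem2.1}, each component is either $\equiv0$ or strictly positive. I would then identify the minimizer as the gradient pair of a second eigenfunction. If one component vanished identically, $u$ would depend on a single diagonal variable; the Neumann condition on a lip domain then forces $\Omega$ to be a rectangle with sides in the diagonal directions. Excluding rectangles therefore gives $w_1,w_2>0$, which proves (2).

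For (1), I would apply the positivity argument to every eigenfunction in the $\mu_2$-eigenspace. Suppose $u,v$ are independent second eigenfunctions, both normalized so that $\partial_{e_1+e_2}>0$. The combination $u-tv$, with $t=\partial_{e_1+e_2}u(x_0)/\partial_{e_1+e_2}v(x_0)$ at some interior point $x_0$, is again a second eigenfunction whose $e_1+e_2$ derivative vanishes at $x_0$. By the dichotomy above it must then be an eigenfunction of a rectangle in diagonal position, with one derivative identically zero. A short computation shows that a non-square rectangle has a simple $\mu_2$, so the only rectangle with a two-dimensional eigenspace is the square, and this gives (1).

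The main obstacle I expect is the rigorous proof that the minimizer of the vector problem is a gradient, together with the careful handling of the boundary terms and of corners. This is where the $H^2$-regularity coming from convex corners and the precise form of the curvature term from \cite{Rohleder2023} are needed.
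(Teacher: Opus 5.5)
The paper does not prove this lemma; it quotes it from Atar--Burdzy, who use couplings of reflected Brownian motions, and from Rohleder, whose analytic route (a variational principle for gradients of Neumann eigenfunctions, applied in $45^\circ$-rotated components) is the one you follow. The lip-specific parts of your outline are sound. The Neumann condition forces $w_1,w_2$ to have a common sign on $\partial\Omega$, so $(|W_1|,|W_2|)$ stays admissible. The dichotomy argument for (1) and the reduction of the degenerate case to a diagonal rectangle are correct.

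The gap is in the step you yourself flag, and the justification you give for it runs the wrong way. Knowing that every $\mu_j$, $j\ge2$, belongs to the spectrum of the vector problem says nothing about what lies \emph{below} $\mu_2$. Something else does lie in that spectrum: for each Dirichlet eigenfunction $\phi_k$, the field $\nabla^\perp\phi_k$ is tangential on $\partial\Omega$ and has Rayleigh quotient $\lambda_k^D$. Two ingredients are missing.
(i) Decompose a tangential $H^1$ field as $W=\nabla\psi+\nabla^\perp\phi$, with $\partial_n\psi=0$ and $\phi\in H^1_0(\Omega)$. This needs $\Omega$ simply connected, which lip domains are; otherwise tangential harmonic fields have quotient $0$. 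Combine it with the identity $a(W)=\|\operatorname{div}W\|^2+\|\operatorname{rot}W\|^2=\|\Delta\psi\|^2+\|\Delta\phi\|^2\ge\mu_2\|\nabla\psi\|^2+\lambda_1^D\|\nabla\phi\|^2$.
(ii) Use the strict inequality $\mu_2<\lambda_1^D$ (P\'olya, Friedlander, Filonov; the same fact the paper invokes for Lemma~\ref{lem2.3}).
Only (ii) gives both $\inf a(W)/\|W\|^2\ge\mu_2$ and the fact that every minimizer has $\phi=0$, i.e.\ is the rotated gradient of a second Neumann eigenfunction. Without it, $(|W_1|,|W_2|)$ could be the rotated gradient of a Dirichlet eigenfunction, and the positivity you obtain would carry no information about $u$.

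Two smaller repairs. First, the curvature term has no sign in general, since a lip domain has convex corners, not a convex boundary. This is harmless: the term only involves $\kappa|W|^2$ on $\partial\Omega$, and $|W|$ is unchanged by $W\mapsto(|W_1|,|W_2|)$, which is exactly what ``same energy'' requires. Second, applying the positivity step to the pair $(w_1,w_2)$ of a given $u$ only shows that each $w_i$ is either $\equiv0$ or of constant sign; it does not rule out $w_1>0>w_2$. Prove (1) first, since your argument there uses nothing beyond this dichotomy. Simplicity then gives $(|w_1|,|w_2|)=c\,(w_1,w_2)$, hence a common sign, which completes (2).
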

Combining Lemma \ref{lem.lip}, we have the following result.
\begin{Lemma} \label{lem.y}
	Let $Q$ be a right trapezoid $P_1P_2P_3P_4$ with $|P_1P_2|>|P_3P_4|$,
	where $P_1$ is the origin, $P_2$ lies on the $x$-axis,
	$P_3$ lies in the first quadrant, and $P_4$ lies on the $y$-axis.
	Let $u$ be a second Neumann eigenfunction of $Q$. Then
    \begin{enumerate}[(1)]
        \item The second Neumann eigenvalue is simple.
        \item $u$ does not have any non-vertex critical points, and  $u$ is monotonic along the four edge directions.
        \item $u$ attains its global extrema at $P_2$ and $P_4$, $P_1$ and $P_3$ are saddle points.
    \end{enumerate}
\end{Lemma}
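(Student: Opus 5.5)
Since $Q$ is a lip domain after a rotation by $\pi/4$, the plan is to obtain everything from Lemma \ref{lem.lip} applied in rotated coordinates. Set $P_1=(0,0)$, $P_2=(b,0)$, $P_3=(c,h)$, $P_4=(0,h)$ with $0<c<b$. The four edge directions are $e_1$ (bottom and top), $e_2$ (left), and the direction of $P_2P_3$, namely $(c-b,h)$, which points up and to the left.

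The first step is to check the lip property after rotating by $\pi/4$. Put new coordinates $\xi=(x+y)/\sqrt2$ and $\eta=(y-x)/\sqrt2$. In these coordinates the horizontal and vertical edges have slopes $\pm1$. The slanted edge has direction with $x$-component negative and $y$-component positive. Its slope in the $(\xi,\eta)$ frame is $(h+b-c)/(h-b+c)$, which has absolute value at least $1$. Lemma \ref{lem.lip} needs the boundary to be the union of two graphs over the $\xi$-axis with Lipschitz constants at most $1$, so this edge is a problem.

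The choice of rotation must therefore be matched to the slanted edge. Rotating instead by $-\pi/4$, with $\xi=(x-y)/\sqrt2$ and $\eta=(x+y)/\sqrt2$, the slanted edge direction $(c-b,h)$ has slope $(c-b+h)/(h-c+b)$, of modulus $<1$ since $b-c>0$. The axis edges again have slopes $\pm1$. One then checks that the domain lies between two graphs: the lower graph is formed by the left and bottom edges, and the upper graph by the top and slanted edges. This also needs verification that both graphs are single-valued over the $\xi$-interval, which holds because all edge directions make angle at most $\pi/4$ with the $\xi$-axis. All four corners are convex. Since $|P_1P_2|>|P_3P_4|$, $Q$ is neither a rectangle nor a square.

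With the lip property in hand, Lemma \ref{lem.lip}(1) gives simplicity. Lemma \ref{lem.lip}(2) gives, after a sign change, $\partial_{e_1}u$-type positivity in the rotated frame, i.e. $\partial_{\xi\pm\eta}u>0$ in $Q$. In the original frame this means $u_x>0$ and $u_y<0$ in $Q$, up to sign conventions. The derivative along $P_2P_3$ is then $(c-b)u_x+hu_y<0$. This gives strict monotonicity along all four edge directions in the interior, hence no interior critical points.

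The main obstacle is excluding non-vertex critical points on the edges. On the bottom edge the Neumann condition gives $u_y=0$, so it suffices to show $u_x\neq0$ there. Since $u_x\ge0$ in $Q$ and $u_x$ satisfies $\Delta u_x+\mu u_x=0$, with $u_x$ vanishing on the vertical edge (by Neumann), the plan is to reflect evenly across the bottom edge. This keeps $u_x$ a positive eigen-solution in the reflected neighbourhood, and Serrin's principle (Lemma \ref{lem2.1}) then forces $u_x>0$ at interior edge points. The same reflection argument gives $u_y<0$ on the left and top edges. On the slanted edge, the tangential derivative is a positive combination of $-u_x$ and $u_y$ after accounting for the normal condition. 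Its strictness follows by writing $\partial_\tau u$ as a combination of $u_x,u_y$ with the Neumann relation $\partial_n u=0$, where both terms have a fixed sign and are not both zero. Positivity of $u_x$ near the edge is again obtained by reflection across the edge.

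Finally, monotonicity determines the extremes. $u$ increases in $x$ and decreases in $y$, so the maximum is at the vertex with largest $x$ and smallest $y$, namely $P_2$. The minimum is at $P_4$. At $P_1$ the function increases along one edge and decreases along the other, so $P_1$ is a saddle, and likewise $P_3$.
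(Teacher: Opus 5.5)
Your overall route is the paper's. The frame $\xi=(x-y)/\sqrt2$, $\eta=(x+y)/\sqrt2$ is the paper's $(\widetilde e_1,\widetilde e_2)$ frame. Lemma \ref{lem.lip} then gives simplicity and $u_x>0$, $u_y<0$, $\partial_\tau u<0$ in $Q$, and the extremum and saddle conclusions are read off from strict monotonicity exactly as in the paper. On the edges, the paper applies the Hopf lemma to the tangential derivative and contradicts it with $\partial_n\partial_\tau u=0$, which comes from the Neumann condition. Your alternative, even reflection across the edge followed by the interior strong maximum principle (Lemma \ref{lem2.1}), is an equivalent mechanism and works as written on the bottom and left edges.

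The edge step has two problems, however. First, on the top edge the Neumann condition gives $u_y=0$, so ``$u_y<0$ on the top edge'' is false. What you need there is $u_x>0$, obtained by reflecting $u_x$ evenly across $y=h$.

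Second, and more seriously, on the slanted edge reflecting $u_x$ does not work. Let $R$ be the reflection across the line through $P_2P_3$, $R_0$ its linear part, $\ell=|P_2P_3|$, and $\widetilde u$ the even extension. On the mirrored side, $\partial_x\widetilde u(z)=\partial_{R_0e_1}u(Rz)$ with $R_0e_1=\bigl((b-c)^2-h^2,\,-2(b-c)h\bigr)/\ell^2$. Once $h>b-c$, i.e. the base angle exceeds $\pi/4$ (the case that matters later in the paper), the facts $u_x>0$, $u_y<0$ no longer control the sign of this derivative. So you cannot apply the strong maximum principle to $\partial_x\widetilde u$ on a full disk around the edge point $p$. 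Also note that the Neumann relation $hu_x+(b-c)u_y=0$ on that edge makes ``$u_x,u_y$ not both zero'' equivalent to $\nabla u(p)\neq0$, so that sentence only restates what must be proved.

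The fix is to reflect the tangential derivative itself. Since $R_0\tau=\tau$, we have $\partial_\tau\widetilde u=(\partial_\tau u)\circ R$ on the mirrored side. Hence $\partial_\tau\widetilde u\le 0$ near $p$ and it solves $(\Delta+\mu)w=0$, so Lemma \ref{lem2.1} gives $\partial_\tau u(p)<0$. In general, the function to reflect across each edge is its tangential derivative: $u_x$ on the two horizontal edges, $u_y$ on the vertical one, $\partial_\tau u$ on the slanted one. This is exactly the quantity the paper feeds into Hopf. With that correction your proof is complete.
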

\begin{proof}
	Introduce the rotated orthonormal basis
\[
\widetilde e_1=\frac{\partial_x-\partial_y}{\sqrt2},\qquad
\widetilde e_2=\frac{\partial_x+\partial_y}{\sqrt2}.
\]
In the $(\widetilde e_1,\widetilde e_2)$‑coordinates,
right trapezoids are Lipschitz domains (see Figure \ref{fig:7}).
	Lemma \ref{lem.lip} implies that the second Neumann eigenvalue
	is simple. Moreover, the corresponding eigenfunction $u$,
	possibly after a sign adjustment, satisfies
	\begin{align*}
		\partial_x u > 0,\quad
		\partial_y u < 0 \quad \text{and} \quad
		\partial_\tau u < 0 \quad \text{in } Q,
	\end{align*}
	where $\tau$ is a unit tangential vector along edge $P_2P_3$.

	Suppose that there exists a non-vertex critical point $p$ in $\bar{Q}$.
	Then $p$ must lie on one of the edges of $Q$. If $p$ lies on the edge $P_1P_2$,
	we note that $\partial_x u$ attains its minimum zero at $p$.
	By  Hopf Lemma \ref{lem2.4}, the second-order mixed derivative $\partial_y\partial_x u$
	is positive at $p$. However, the Neumann boundary condition
	for $u$ implies that $\partial_y\partial_x u = \partial_x\partial_y u$ vanishes at $p$,
	which yields a contradiction. The same argument applies to the remaining edges.
	Therefore, there are no non-vertex critical points in $\bar{Q}$.
	Thus, we have
	\begin{align*}
		\partial_x u > 0,\quad
		\partial_y u < 0 \quad \text{and} \quad
		\partial_\tau u < 0 \quad \text{ in } \bar{Q}\backslash\{\text{Vertices}\}.
	\end{align*}
	This further implies that $P_2$ is the unique global maximizer of $u$,
	$P_4$ is the unique global minimizer of $u$, and $P_1$ and $P_3$ are saddle points of $u$.
\end{proof}

\begin{figure}[htbp]
	\centering
		\includegraphics[width=0.3\textwidth]{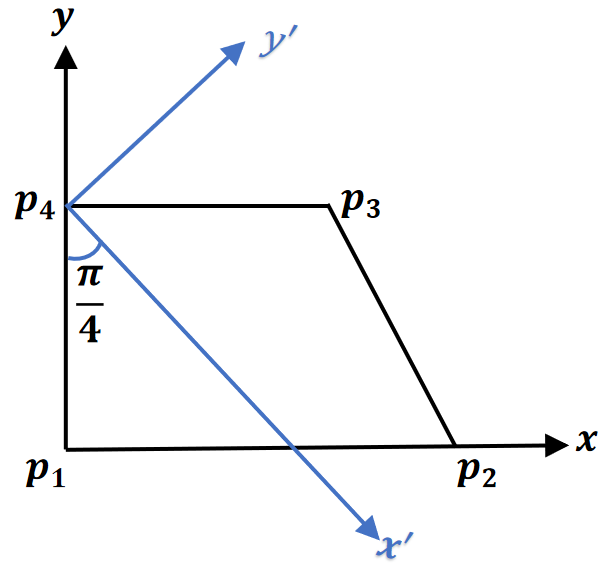}
		\caption{A right trapezoid}
        \label{fig:7}
	
\end{figure}

Next, we examine the monotonicity of the second Neumann eigenvalue with respect to the height of the trapezoid, which will provide a crucial tool for determining the symmetry or antisymmetry of the second Neumann eigenfunction later.
\begin{Lemma} \label{lem.dec}
    Let $Q_h$ be the right trapezoid $P_1P_2P_3P_4$ with height $h$,
	where $P_1$ is the origin, $P_2=(1,0)$ is a fixed point on the positive $x$-axis,
	$P_3$ lies in the first quadrant, and $P_4$ lies on the $y$-axis
	and the acute angle $\angle P_1P_2P_3$ is fixed.
	%For convenience, we may assume $P_1P_2=1$.
	If the nodal line of the second Neumann eigenfunction
	connects the slanted edge $P_2P_3$ to the vertical edge $P_1P_4$, then the second Neumann eigenvalue $\mu(Q_h)$ is strictly decreasing about $h$.
\end{Lemma}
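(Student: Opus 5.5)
Our plan is to use a Hadamard-type domain variation formula. We write $Q_h$ with $P_4=(0,h)$ and $P_3=(1-h\cot\beta,h)$, where $\beta=\angle P_1P_2P_3$ is fixed. Increasing $h$ to $h+\delta$ adds the thin strip between the lines $y=h$ and $y=h+\delta$, bounded by $x=0$ and the slanted line. The moving part of the boundary is the top edge $P_3P_4$, which translates upward with normal velocity $V=1$. Since $\mu_2$ is simple (Lemma \ref{lem.y}(1)), $\mu(Q_h)$ is differentiable in $h$. For an $L^2$-normalized Neumann eigenfunction $u$, the Hadamard formula gives
\[
\frac{d\mu}{dh}=\int_{P_3P_4}\bigl(|\nabla u|^2-\mu u^2\bigr)\,V\,ds.
\]
The slanted edge also lengthens. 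To avoid treating corner contributions directly, we will realize the deformation by an explicit affine-type map $\Phi_h:Q_{h_0}\to Q_h$, namely $(x,y)\mapsto(x\,\ell(y),\,y h/h_0)$ with $\ell$ chosen so that the slanted edge is mapped correctly. We then differentiate the Rayleigh quotient. This yields the same boundary formula, rewritten as an integral over $Q_h$ that involves $u_x^2$, $u_y^2$ and $u^2$. On $P_3P_4$ we have $u_y=0$, so the derivative is $\int_{P_3P_4}(u_x^2-\mu u^2)\,dx$.

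The sign is then determined by the nodal hypothesis. Rather than using the boundary formula directly, which has no evident sign, we will use a monotonicity argument based on the nodal line. Let $\Gamma$ be the nodal line joining $P_2P_3$ to $P_1P_4$. It splits $Q_h$ into a lower piece $\Omega^-\ni P_2$ (by Lemma \ref{lem.y}(3) the extrema lie at $P_2$ and $P_4$) and an upper piece $\Omega^+$ containing the top edge. On $\Omega^+$, $u$ is the first mixed eigenfunction: Dirichlet on $\Gamma$, Neumann elsewhere. The same holds on $\Omega^-$, and $\mu(Q_h)$ equals both mixed eigenvalues.

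For $h'>h$, we take $Q_{h'}\supset Q_h$ and build a test function on $Q_{h'}$. Keep $u$ on $\Omega^-$. On the enlarged upper piece $\Omega^+_{h'}=\Omega^+\cup(\text{strip})$, use the first mixed eigenfunction, which is Dirichlet on $\Gamma$. Because this region is obtained by enlarging the Neumann part of the boundary, domain monotonicity of mixed eigenvalues gives a non-strict inequality $\lambda_1^{mix}(\Omega^+_{h'})\le\mu(Q_h)$. Strictness follows because equality would force the extension of $u$ to satisfy Neumann conditions on the old top edge from inside the strip. Unique continuation then contradicts the fact that $u\neq0$ there.

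The two pieces give a two-dimensional space of functions in $H^1(Q_{h'})$, each with Rayleigh quotient at most $\mu(Q_h)$. The quotient is strictly smaller for the upper one, and the pieces have disjoint supports. By min–max with $\mu_1=0$ and the constant function, some combination orthogonal to constants has quotient $<\mu(Q_h)$. Hence $\mu(Q_{h'})<\mu(Q_h)$.

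The main obstacle is making sure the enlargement respects the hypothesis. The strip must touch only the Neumann portion of $\partial\Omega^+$, which requires $\Gamma$ to end on $P_2P_3$ strictly below $P_3$ and on $P_1P_4$ strictly below $P_4$. Otherwise the lower piece would also change. The hypothesis that $\Gamma$ connects $P_2P_3$ to $P_1P_4$ gives this, up to a check at the vertices, which we handle by Lemma \ref{lem.y}(2): there are no critical points, and $u(P_4)\neq0$, $u(P_3)\neq0$. The slanted edge also moves near $P_3$, but that part lies above $\Gamma$, so it is again a Neumann enlargement of $\Omega^+$. The strict-inequality step will be carried out through unique continuation as described.
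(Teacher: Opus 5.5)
\textbf{There is a genuine gap}, at the step that carries the whole argument. You invoke ``domain monotonicity of mixed eigenvalues'' to get $\lambda_1^{mix}(\Omega^+_{h'})\le\mu(Q_h)$ when $\Omega^+$ is enlarged across its Neumann boundary, with the Dirichlet part $\Gamma$ fixed. No such principle exists. Mixed eigenvalues, like Neumann ones, are not monotone under enlargement through the Neumann part.

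A concrete example: take the unit square with Dirichlet condition on $\{x=0\}$. The first eigenfunction is $\sin(\pi x/2)$ with eigenvalue $\pi^2/4$. On the top edge one has $u_x^2-\lambda u^2=\frac{\pi^2}{4}\cos(\pi x)$, which is positive for $x<1/2$. So by the Hadamard formula, a small outward bump of the top edge supported in $\{x<1/2\}$ strictly \emph{increases} the eigenvalue.

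Your strictness argument via unique continuation also does not transfer. It is the Dirichlet argument, which starts from extending the smaller domain's eigenfunction by zero. Across a Neumann interface there is no admissible extension with the same Rayleigh quotient, so the argument has nowhere to start.

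In the present geometry, whether adding the strip lowers the mixed eigenvalue of $\Omega^+$ is decided, to first order, by the sign of $\int_{P_4P_3}(u_x^2-\mu u^2)\,dx$. You can see this from the Hadamard formula for the mixed problem on $\Omega^+$, or by testing with $u(x,h)$ extended constantly in $y$ into the strip. This is exactly the integral you set aside as having ``no evident sign''. So the nodal decomposition only relocates the difficulty; it does not resolve it.

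The missing idea is that this boundary integral does have a sign. Substitute $\mu u=-(u_{xx}+u_{yy})$ and note that $u_x^2+uu_{xx}=\partial_x(uu_x)$. The endpoint terms vanish because $\nabla u\to0$ at the convex corners $P_3,P_4$. This gives
\[
\frac{d\mu(Q_h)}{dh}=\int_{P_4P_3}\bigl(u_x^2-\mu u^2\bigr)\,dx=\int_{P_4P_3}u\,u_{yy}\,dx .
\]
Now determine the signs of the two factors:
\begin{itemize}
\item By Lemma \ref{lem.y}, after normalizing the sign, $u_y<0$ in $Q_h$, and $u_y=0$ on $P_3P_4$ by the Neumann condition. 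Since $(\Delta+\mu)u_y=0$, Hopf's lemma applied to $u_y$ gives $u_{yy}>0$ on the relative interior of $P_3P_4$.
\item The nodal hypothesis, together with $u_y<0$, gives $u<0$ on the region above $\Gamma$, hence on $P_3P_4$.
\end{itemize}
Therefore $d\mu/dh<0$. This is the paper's proof. The first paragraph of your proposal, the Hadamard formula, already sets it up; you only needed to integrate by parts along the top edge rather than abandon the formula.
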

\begin{proof}
   In order to show that $\mu(Q_h)$ is strictly decreasing with
   respect to $h$, we need to prove that the 
    derivative satisfies ${d\mu(Q_h)}/{dh} < 0$.
    Lemma \ref{lem.y} shows that $\mu(Q_h)$ is simple, and the domain varies only at the top edge $P_3P_4$
    moving in the direction $n = (0,1)$ with 
    unit speed. A direct calculation gives the deformation vector field
\begin{align}
	V(x,y)=\frac{1}{h}\left(-\frac{xy}{\tan(\angle P_1P_2P_3)-y}, y\right).
\end{align}
Thus the Hadamard formula (see Theorem 2.5.7 in \cite{Henrot2006}) yields
\begin{align}
	\frac{d\mu(Q_h)}{dh}=	\int_{Q_h}\operatorname{div}
	\left[\bigl(|\nabla u|^2-\mu(Q_h)u^2\bigr)V	\right]dxdy ,
\end{align}
where $u$ is the second Neumann eigenfunction with
    the normalization $||u||_{L^2(\Omega_h)}=1$.
Hence, by the divergence theorem and the Neumann boundary condition,
noting that $V\cdot n=0$ on the three fixed edges and
$V\cdot n=1$ on $P_4P_3$, we obtain
\begin{equation}\label{eq:hadamard}
    \begin{aligned}
    \frac{d\mu(Q_h)}{dh}&=\int_{P_4P_3}\left(|\nabla u|^2-\mu(Q_h)u^2\right) dx \\
    &= \int_{P_4P_3} \left( (\partial_x u)^2 - \mu(Q_h) u^2 \right) dx.
    \end{aligned}
\end{equation}
    Substituting $\mu(Q_h) u = -(\partial_{xx} u + \partial_{yy} u)$ into \eqref{eq:hadamard}, we have
    \begin{equation}
        \begin{aligned}
	   \frac{d\mu(Q_h)}{dh} &= \int_{P_4P_3} \left[ (\partial_x u)^2 + u(\partial_{xx} u + \partial_{yy} u) \right] dx \\
        &= \int_{P_4P_3} \partial_x (u \partial_x u) dx + \int_{P_4P_3} u \partial_{yy} u dx \\
        &= \int_{P_4P_3} u \partial_{yy} u  dx.
        \end{aligned}
    \end{equation}
    Here the endpoint term $[u\partial_xu]_{P_4}^{P_3}$ vanishes. Since $Q_h$ is convex, the standard Neumann corner expansion gives $\nabla u\to0$ at $P_3$ and $P_4$; equivalently, the two nonparallel Neumann conditions at each vertex force the limiting gradient to vanish.
    
    Now,  we determine the signs of $u$ and $\partial_{yy} u$
    on $P_3P_4$. Lemma \ref{lem.y} implies that, possibly after a sign change, $\partial_y u < 0$ holds in $Q_h$, together with the Neumann boundary condition,  Lemma \ref{lem2.4} implies that
    $\partial_{yy} u > 0$ on $P_3P_4$.
    Finally, since the nodal line of $u$ connects the edge $P_2P_3$ and the edge $P_1P_4$,
    this implies that the nodal line divides $Q_h$ into an upper
    region and a lower region. Since $\partial_y u < 0$ holds in $Q_h$,
    we have that $u$ must be negative on the upper
    region,
    in particular, $u<0$ holds on $P_3P_4$.
    Combining these facts, we complete the proof. 
\end{proof}

For a general right trapezoid with  mixed boundary conditions, we have the following result.
\begin{Lemma} \label{lem.x}
	Let $Q$ be a right trapezoid $P_1P_2P_3P_4$ with $|P_1P_2|>|P_3P_4|$,
	where $P_1$ is the origin, $P_2$ lies on the positive $x$-axis,
	$P_3$ lies in the first quadrant, and $P_4$ lies on the $y$-axis.
	Let $u$ be the first eigenfunction of the mixed boundary problem 
	\begin{equation} \label{eq.mix}
		\left\{ 
		\begin{aligned}
			& \Delta u + \lambda u=0 \text{ in } Q, \\
			& u =0 \text{ on } P_1P_4, \\
			& \partial_n u  =0 \text{ on } \partial Q\backslash P_1P_4.
		\end{aligned}
		\right.
	\end{equation}
	Then, possibly after a sign change, we have
	\begin{align*}
		& \partial_x  u >0 \text{ in } \bar{Q}\backslash\{ P_2,P_3 \}, \\
		& \partial_y  u <0 \text{ in } Q\cup P_2P_3.
	\end{align*}
\end{Lemma}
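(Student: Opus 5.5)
We reduce Lemma~\ref{lem.x} to Lemma~\ref{lem.y} by reflection, and then upgrade the resulting interior monotonicity to the boundary with the Hopf-lemma argument used in the proof of Lemma~\ref{lem.y}.

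\textbf{Reflection.} Reflect $Q$ oddly across the $y$-axis, i.e.\ across the Dirichlet edge $P_1P_4$. Set
\[
\tilde Q=Q\cup P_1P_4^{\circ}\cup Q^*,\qquad \tilde u(-x,y)=-u(x,y),
\]
where $Q^*$ is the mirror image of $Q$. Then $\tilde Q$ is a symmetric (isosceles) trapezoid. Since $u=0$ on $P_1P_4$, the odd extension $\tilde u$ is a Neumann eigenfunction on $\tilde Q$ with eigenvalue $\lambda$, antisymmetric about the $y$-axis.

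\textbf{Monotonicity via the lip property.} A cleaner route works directly on $Q$, using the Atar--Burdzy lip structure in the rotated frame $(\widetilde e_1,\widetilde e_2)$ as in Lemma~\ref{lem.y}. I would argue as follows.
\begin{itemize}
\item The first mixed eigenfunction $u$ is positive in $Q$ after a sign choice: it is a principal eigenfunction, and Lemma~\ref{lem2.1} gives strict positivity.
\item Set $w=\partial_x u$. On $P_1P_2$ and $P_3P_4$ the Neumann condition gives $\partial_y w=0$. On $P_1P_4$ we have $w=\partial_x u\ge 0$ by Hopf.
\item On the slanted edge $P_2P_3$ the boundary condition for $\partial_x u$ is oblique, so I would instead use the coupling/lip theorem. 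In rotated coordinates $Q$ is a lip domain with mixed data. The Atar--Burdzy argument (synchronous and mirror couplings of reflected Brownian motion killed on $P_1P_4$) gives that $\partial_{\widetilde e_1}u$ and $\partial_{\widetilde e_2}u$ keep a sign.
\end{itemize}
Alternatively, and more elementarily, I would show $\partial_x u\ge0$ by a sweeping/variational argument. Test the Rayleigh quotient with the rearrangement that makes $u$ increasing in $x$ on each horizontal slice. Because the horizontal slices of a right trapezoid are intervals $[0,\ell(y)]$ with $\ell$ decreasing in $y$, monotone rearrangement in $x$ preserves the mixed boundary conditions and does not increase the Dirichlet energy. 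So the simple principal eigenfunction is itself monotone, $\partial_x u\ge0$.

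For $\partial_y u$, slices in $y$ at fixed $x$ are intervals $[0,m(x)]$. Decreasing rearrangement in $y$ starting from $y=0$ keeps the Dirichlet condition on $x=0$ and the Neumann conditions elsewhere. This gives $\partial_y u\le0$, with the sign pointing so that $u$ is larger near the long base.

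\textbf{Strictness.} Both $\partial_x u$ and $\partial_y u$ satisfy $(\Delta+\lambda)w=0$ in $Q$. By Lemma~\ref{lem2.1} each is either identically zero or strictly signed. Neither vanishes identically: $\partial_x u\equiv0$ contradicts $u=0$ on $P_1P_4$ with $u>0$, and $\partial_y u\equiv0$ would force the slanted Neumann condition $u_x\sin\theta=0$, contradicting $\partial_x u>0$.

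\textbf{Boundary extension.} Repeat the Hopf argument of Lemma~\ref{lem.y}.
\begin{itemize}
\item If $\partial_x u=0$ at a relative interior point of $P_1P_2$ or $P_3P_4$, then Hopf gives $\partial_y\partial_x u\ne0$ there, while the Neumann condition gives $\partial_x\partial_y u=0$, a contradiction.
\item On $P_1P_4$, Hopf applied to $u$ itself gives $\partial_x u>0$.
\item On $P_2P_3$, decompose $\partial_x u$ along the tangent $\tau$. Then $\partial_x u=0$ forces $\partial_\tau u=0$, hence $\nabla u=0$, and the Lemma~\ref{lem.y} argument applied to $\partial_\tau u$ excludes this.
\item The vertices $P_1,P_4$ are handled by the corner expansion: at $P_1$ and $P_4$ the angles are right angles with mixed D/N data, so $u\sim c\,r\cos(\cdot)$ with $c\ne0$, giving $\partial_x u>0$ there.
\end{itemize}
For $\partial_y u$ on $P_2P_3$ one uses the same tangential argument. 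I expect the main obstacle to be the rigorous sign of the derivatives on the slanted edge, together with justifying that rearrangement preserves the boundary conditions; the vertex behaviour at $P_2,P_3$ is the reason those points are excluded in the statement.
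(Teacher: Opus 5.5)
Your strictness step and your boundary upgrade (Hopf on the relative interiors of the edges, the tangential decomposition on $P_2P_3$, local reflection at $P_1,P_4$) are fine. They match what the paper does at the end. The gap is the interior monotonicity itself, which is the real content of the lemma, and neither of your two routes supplies it.

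\textbf{The rearrangement route.} It relies on a P\'olya--Szeg\H{o} inequality that is false here. Write $P_2=(b,0)$, $P_3=(c,h)$ and $\alpha=\angle P_1P_2P_3$. Rearranging in $x$ on slices $[0,\ell(y)]$ of varying length pins the level lines of $u^\#$ to the moving endpoint $\ell(y)$. So $\partial_y u^\#$ can pick up a term of size $\cot\alpha\,|\partial_x u^\#|$.

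For a concrete counterexample, take $u=g(x)$, where $g$ rises linearly from $0$ to $1$ on $[0,\delta]$, equals $1$ up to $x=b/2$, and falls back to $0$ on $[b/2,b/2+\delta]$.
\begin{itemize}
\item On every slice with $\ell(y)>b/2+\delta$, the increasing rearrangement is a single ramp of width $2\delta$ located at $x=\ell(y)-b/2-\delta$.
\item This ramp moves with slope $\ell'(y)=-\cot\alpha$.
\item The energy per unit height therefore changes from $2/\delta$ to $(1+\cot^2\alpha)/(2\delta)$, while the $L^2$ norm is unchanged.
\end{itemize}
Hence for $\alpha<\pi/6$ and small $\delta$ the Rayleigh quotient goes up. The same defect affects your $y$-rearrangement on the part $c<x<b$, where the vertical slices shrink. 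This is exactly the obstruction, noted by Kawohl, that confines monotone rearrangement to cylinders.

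\textbf{The coupling route.} This is not yet an argument either. Lemma~\ref{lem.lip} concerns the second Neumann eigenfunction. A version for reflected Brownian motion killed on $P_1P_4$ would have to be proved, not invoked. In addition, the lip theorem controls the derivatives in the diagonal directions $\widetilde e_1\pm\widetilde e_2$ of the lip frame, that is, $e_1$ and $-e_2$. It does not control $\partial_{\widetilde e_1}u$ and $\partial_{\widetilde e_2}u$ as you wrote.

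\textbf{The missing idea: a second, even reflection.} Your odd reflection across $P_1P_4$ alone produces an isosceles trapezoid with a single symmetry axis, for which no ready-made monotonicity theorem is available. The paper proceeds as follows.
\begin{itemize}
\item First reflect $Q$ evenly across the Neumann edge $P_1P_2$, then oddly across $x=0$. This gives the convex hexagon with vertices $(\pm b,0)$, $(\pm c,\pm h)$, which is symmetric about both coordinate axes.
\item The extension $U$ is a Neumann eigenfunction, odd in $x$ and even in $y$.
\item $\lambda$ is the lowest Neumann eigenvalue among $x$-odd eigenfunctions. Indeed, the first mixed eigenfunction on the right half of the hexagon is positive and simple, hence even in $y$, hence equal to the even extension of $u$.
\item Theorem~1.1 of Jerison--Nadirashvili then gives $\partial_x u>0$ and $\partial_y u<0$ in $Q$ directly.
\end{itemize}
Your Hopf arguments then finish the proof.
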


\begin{proof}
We reflect $Q$ across the edge $P_1P_2$ and set
\[
D=Q\cup\{(x,-y):(x,y)\in Q\}.
\]
We then reflect $D$ across the line $x=0$ and set
\[
\Omega=D\cup\{(-x,y):(x,y)\in D\}.
\]
By extending $u$ evenly across $P_1P_2$ and oddly across $x=0$, we obtain a Neumann eigenfunction $U$ on $\Omega$. The domain $\Omega$ is a convex polygon symmetric about both coordinate axes, and $\lambda$ is the lowest Neumann eigenvalue among those eigenfunctions that are antisymmetric about $x=0$.

Therefore, by \cite[Theorem~1.1]{Jerison2000JAMS}, after possibly changing the sign of $u$, we have
\[
\partial_x u > 0 \quad\text{and}\quad \partial_y u < 0 \quad\text{in } Q.
\]
Finally, by the Hopf lemma, exactly as in the proof of Lemma~\ref{lem.y}, the corresponding strict inequalities extend to the relative interiors of the boundary edges as well. This proves the lemma.
\end{proof}

For the mixed boundary value problem, we examine the monotonicity of the first eigenvalue with respect to the height,
which will provide a crucial tool for determining the symmetry or antisymmetry of the second Neumann eigenfunction later.
\begin{Lemma} \label{lem.inc}
	Let $Q_h$ be the right trapezoid $P_1P_2P_3P_4$ with height $h$,
	where $P_1$ is the origin, $P_2$ is a fixed point on the $x$-axis,
	$P_3$ lies in the first quadrant, $P_4$ lies on the $y$-axis
	and the base acute angle $\angle P_1P_2P_3$ is fixed.
	Let $\lambda_h$ be the first eigenvalue of the mixed boundary problem 
	\begin{equation} \label{eq.mix2}
		\left\{ 
		\begin{aligned}
			& \Delta u + \lambda u=0 \text{ in } Q_h, \\
			& u =0 \text{ on } P_1P_4, \\
			& \partial_n u  =0 \text{ on } \partial Q_h\backslash P_1P_4.
		\end{aligned}
		\right.
	\end{equation}
	Then $\lambda_h$ is strictly increasing as a function of $h$.
\end{Lemma}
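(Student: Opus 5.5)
We follow the Hadamard-formula argument of Lemma~\ref{lem.dec}, now for the mixed problem \eqref{eq.mix2}. The first step is to check that $\lambda_h$ is simple and depends smoothly on $h$. Simplicity is standard: the first eigenvalue of a mixed problem with nonempty Dirichlet part on a connected domain is simple, and its eigenfunction can be taken positive in $Q_h$. By Lemma~\ref{lem.x} we normalize $u>0$ with $\partial_xu>0$ and $\partial_yu<0$. We use the same deformation field $V$ as in Lemma~\ref{lem.dec}. It satisfies $V\cdot n=0$ on $P_1P_2$ and on the slanted edge, $V\cdot n=1$ on the top edge $P_4P_3$, and $V\cdot n=0$ on the Dirichlet edge $P_1P_4$ (where $x=0$, so $V$ is vertical there). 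Hence only the moving top edge, which carries a Neumann condition, contributes. The Hadamard formula for a Neumann portion of the boundary then gives
\[
\frac{d\lambda_h}{dh}=\int_{P_4P_3}\bigl(|\nabla u|^2-\lambda_h u^2\bigr)\,dx ,\qquad \|u\|_{L^2(Q_h)}=1 .
\]
The Dirichlet edge $P_1P_4$ also lengthens as $h$ grows. We account for this by noting that the parametrization of $V$ keeps that edge's endpoints moving along $x=0$ with $V\cdot n=0$, so no Dirichlet-type term $-\int(\partial_nu)^2V\cdot n$ appears.

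Next we rewrite the boundary integral as in Lemma~\ref{lem.dec}. On $P_4P_3$ we have $\partial_yu=0$, so $|\nabla u|^2=(\partial_xu)^2$. Substituting $\lambda_hu=-(\partial_{xx}u+\partial_{yy}u)$ and integrating $\partial_x(u\partial_xu)$ along the edge gives
\[
\frac{d\lambda_h}{dh}=\bigl[u\,\partial_xu\bigr]_{P_4}^{P_3}+\int_{P_4P_3}u\,\partial_{yy}u\,dx .
\]
The endpoint term vanishes. At $P_4$ we have $u=0$ by the Dirichlet condition. At $P_3$, a convex Neumann--Neumann corner, $\nabla u\to0$. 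For the sign of the integral, $\partial_yu<0$ in $Q_h$ and $\partial_yu=0$ on the top edge, so $\partial_yu$ attains its maximum $0$ there. The Hopf Lemma~\ref{lem2.4}, applied to $\partial_yu$ (which solves the same equation), gives $\partial_{yy}u>0$ on the relative interior of $P_4P_3$. Since $u>0$ there, the integral is positive, so $d\lambda_h/dh>0$, which is the claimed strict increase.

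The main obstacle is justifying the Hadamard formula and the vanishing endpoint term at the mixed corner $P_4$. That corner is a right angle between a Dirichlet and a Neumann edge, so locally $u\sim c\,r^{1}\sin\theta$ and $\nabla u$ stays bounded but need not vanish. We must therefore check that $u\,\partial_xu\to0$ at $P_4$, which holds since $u\to0$ and $\nabla u$ is bounded, and that all boundary integrands are integrable. We would handle both through the explicit corner expansions and by verifying that the Hadamard derivation applies to the weak formulation with the Dirichlet edge fixed in direction.

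As a fallback, if the direct formula is hard to justify, we would use a monotone comparison argument instead. Take $h'>h$ and rescale $Q_{h'}$ vertically near the top. Using $u_{h'}$ restricted or reflected as a test function would be messier, so we would prefer to first verify the Hadamard computation.
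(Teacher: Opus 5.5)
Your argument is correct, but it takes a different route from the paper. You differentiate $\lambda_h$ with the Hadamard formula, using the field $V$ of Lemma~\ref{lem.dec}. The justifications you flagged as the main obstacle all go through:
\begin{itemize}
\item The flow of $V$ maps $\{x=0\}$ to itself. So the constraint space $\{v=0 \text{ on } P_1P_4\}$ is transported, and $V\cdot n=0$ on the Dirichlet edge removes the term $-(\partial_n u)^2\,V\cdot n$.
\item Every corner is either a convex Neumann--Neumann corner ($P_2$, $P_3$) or a right-angle Dirichlet--Neumann corner that becomes smooth after reflection ($P_1$, $P_4$). Hence $u\in H^2(Q_h)$, and converting the volume form to a boundary integral produces no corner contributions.
\item The endpoint term $[u\,\partial_x u]_{P_4}^{P_3}$ vanishes.
\item Hopf's lemma applied to $\partial_y u$ gives $d\lambda_h/dh=\int_{P_4P_3}u\,\partial_{yy}u\,dx>0$.
\end{itemize}

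The paper instead uses the comparison you set aside as a messier fallback, and it is simpler than you expected. Because the base angle is fixed, $Q_h=Q_{h'}\cap\{y<h\}$ for $h<h'$. So the plain restriction of the positive eigenfunction $u'$ of $Q_{h'}$ is admissible for $Q_h$, with no rescaling or reflection. Green's formula then leaves only the boundary term $\int_{\{y=h\}}u'\,\partial_y u'\,dx$. This term is negative, since $\{y=h\}$ is an interior segment of $Q_{h'}$ on which $u'>0$ and $\partial_y u'<0$ by Lemma~\ref{lem.x}, and it gives $\lambda_h<\lambda_{h'}$ directly.

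The paper's route needs no differentiability of $\lambda_h$, no mixed-boundary Hadamard formula, no corner expansions and no boundary Hopf lemma. Your route pays for those justifications but gives an explicit derivative formula parallel to Lemma~\ref{lem.dec}: strict positivity of $d\lambda_h/dh$ at every $h$, not just strict monotonicity.
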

\begin{proof}
	It is well known that the first eigenvalue $\lambda_h$ can be characterized as
	\begin{align}
		\lambda_h=\inf_{\varphi\in H^1 (Q_h) \atop \varphi=0 \text{ on } P_1P_4}
		\frac{\int_{Q_h} |\nabla\varphi|^2dx}{\int_{Q_h} \varphi^2dx}.
	\end{align}
	Let $h < h'$, and let $u'$ denote the first positive eigenfunction of $Q_{h'}$.
	Since $Q_h \subset Q_{h'}$, let $u$ be the restriction of $u'$ on $Q_h$.
	It is evident that $u = 0$ on the edge $P_1P_4$, and thus $u$ is an admissible test function.
	Thus, 
	\begin{align*}
		\lambda_h &\le \frac{\int_{Q_h} |\nabla u|^2dx}{\int_{Q_h} u^2dx} 
		= \frac{\int_{\partial Q_h} u\partial_n ud\sigma - \int_{Q_h} u\Delta udx}{\int_{Q_h} u^2dx}.
	\end{align*}
	Since $u'$ satisfies the mixed boundary conditions, Lemma \ref{lem.x} yields $\partial_y u'<0$ in $Q_{h'}$. It follows that
	\begin{align*}
		\int_{\partial Q_h} u\partial_n ud\sigma =\int_{\partial Q_h} u'\partial_n u'd\sigma = \int_{P_4P_3} u'\partial_y u'dx <0,
	\end{align*}
	we obtain 
	\begin{align*}
		\lambda_h <
		\frac{ - \int_{Q_h} u\Delta udx}{\int_{Q_h} u^2dx}=\lambda_{h'}.
	\end{align*}
	This completes the proof.
\end{proof}

\section{Isosceles trapezoids case}

~~~~~~Assume that $\Omega$ is a symmetric polygon with respect to the $y$-axis,
we denote $H^1_s(\Omega)$  by the subspace of functions in
$H^1(\Omega)$ that are symmetric about the $y$-axis,
and $H^1_a(\Omega)$  by the subspace of functions in
$H^1(\Omega)$ that are antisymmetric about the $y$-axis.
We can then define the smallest symmetric eigenvalue
$\mu^s$ and the smallest antisymmetric eigenvalue $\mu^a$ as follows:
\begin{align}
	\mu^s &=\inf_{\varphi\in H^1_s(\Omega)\atop\int_\Omega \varphi=0,~ \varphi\not\equiv 0} 
		\frac{\int_{\Omega} |\nabla\varphi|^2dx}{\int_{\Omega} \varphi^2dx}, 	\label{eq.s}\\
	\mu^a &=\inf_{\varphi\in H^1_a(\Omega)\atop\int_\Omega \varphi=0, ~\varphi\not\equiv 0}
		\frac{\int_{\Omega} |\nabla\varphi|^2dx}{\int_{\Omega} \varphi^2dx}.	\label{eq.a}
\end{align}

We define $\Omega^+=\Omega \cap\{(x,y):y\in\mathbb{R},x>0\}$,  then 
$\mu^s$ is the second Neumann eigenvalue of $\Omega^+$
and $\mu^a$ is the first eigenvalue of the mixed boundary problem
\begin{equation} 
	\left\{ 
	\begin{aligned}
		& \Delta u + \lambda u=0 \text{ in } \Omega^+, \\
		& u =0 \text{ on } \partial\Omega^+\cap\{(0,y):y\in\mathbb{R}\}, \\
		& \partial_n u  =0 \text{ on } \partial \Omega^+ \cap\{(x,y):y\in\mathbb{R},x>0\}.
	\end{aligned}
	\right.
\end{equation}

For isosceles triangular domains, $\mu^s$ and $\mu^a$ satisfy the following inequalities. 
\begin{Lemma} \label{lem.tri} (\cite{Laugesen2010JDE}) 
	Let $T$ be an isosceles triangle,
	If the base angle of $T$ is greater than $\frac{\pi}{3}$,
	then $\mu^s(T)<\mu^a(T)$.
	If the base angle of $T$ is less than $\frac{\pi}{3}$,
	then $\mu^a(T)<\mu^s(T)$.
	 If $T$ is an equilateral triangle, then $\mu^a(T)=\mu^s(T)$.
\end{Lemma}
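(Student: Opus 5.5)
Since this is a cited result of Laugesen and Siudeja, I would reprove it inside the framework already set up above. Place $T$ with base $[-1,1]\times\{0\}$ and apex $(0,h)$, so that the base angle $\beta$ satisfies $\tan\beta=h$. By the discussion preceding \eqref{eq.s}--\eqref{eq.a}, work on the right half-triangle $T^+$ with vertices $(0,0)$, $(1,0)$, $(0,h)$:
\begin{itemize}
\item $\mu^s(T)$ is the second Neumann eigenvalue of $T^+$;
\item $\mu^a(T)$ is the first mixed eigenvalue of $T^+$, with Dirichlet condition on the altitude $\{x=0\}$.
\end{itemize}
The plan has three parts: settle the equilateral case $h=\sqrt3$ explicitly, show that the two branches cross only there, and fix the ordering on each side by a perturbative comparison.

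\textbf{Step 1: the equilateral case.} Here $T^+$ is the $30$-$60$-$90$ triangle, and the Neumann spectrum of the equilateral triangle is explicit (Lamé). The lowest nonzero Neumann eigenvalue $16\pi^2/9$ (side $2$) has multiplicity two. Its eigenspace is invariant under the reflection $x\mapsto -x$, so it splits into one even and one odd eigenfunction. Hence $\mu^s(T)=\mu^a(T)$. I would also record the explicit eigenfunctions $u^s,u^a$ and compute the energy splittings $\int u_x^2$ versus $\int u_y^2$ for each; these drive Step 3.

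\textbf{Step 2: monotonicity in $h$ (the main obstacle).} I would study both branches as functions of $h$ with the base fixed, using the Hadamard formula exactly as in Lemma~\ref{lem.dec}. The normal velocity is now supported on the moving hypotenuse, with the deformation field $V=(0,y/h)$.
\begin{itemize}
\item For the mixed branch, I would repeat the test-function argument of Lemma~\ref{lem.inc}. Using the sign information of Lemma~\ref{lem.x}, obtained from the double reflection and \cite{Jerison2000JAMS}, the boundary term $\int u\,\partial_n u$ should have a definite sign.
\item For the Neumann branch, I would use the directional monotonicity of Lemma~\ref{lem.lip}, since $T^+$ is a lip domain after a $45^\circ$ rotation.
\end{itemize}
The aim is a transversality statement: at any height where $\mu^s=\mu^a$, one has $\frac{d}{dh}\mu^s<\frac{d}{dh}\mu^a$. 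This would give a unique crossing, and by Step 1 it must occur at $h=\sqrt3$. This is where I expect the difficulty. The hypotenuse integrand $|\nabla u|^2-\mu u^2$ has no obvious sign, so the derivatives must be compared rather than signed individually. I would first try the rewriting used in Lemma~\ref{lem.dec}: integrate by parts along the edge to remove the tangential term, leaving quantities controlled by the Hopf lemma (Lemma~\ref{lem2.4}) applied to $\partial_y u$.

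\textbf{Step 3: ordering near the crossing.} It remains to decide which branch lies below on each side of $h=\sqrt3$. I would use linear transplantation from the equilateral triangle. Take $T_t=\{(x,ty):(x,y)\in T_{\rm eq}\}$ and the test functions $u(x,y/t)$, where $u$ is $u^s$ or $u^a$; they keep their symmetry class and zero mean. Their Rayleigh quotients are
\[
R_t(u)=\frac{\int u_x^2+t^{-2}\int u_y^2}{\int u^2}.
\]
Since $u^s$ carries relatively more $y$-energy than $u^a$, the upper bound for $\mu^s(T_t)$ drops faster than that for $\mu^a(T_t)$ as $t$ increases past $1$. Combined with the transversality of Step 2, this gives $\mu^s<\mu^a$ for $\beta>\pi/3$ and, symmetrically, $\mu^a<\mu^s$ for $\beta<\pi/3$.

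As a fallback in case Step 2 cannot be closed, I would aim for global bounds instead:
\begin{itemize}
\item upper bounds from transplanting the equilateral eigenfunctions, as above;
\item lower bounds on the competing branch from degenerate limits. As $h\to0$, a thin-domain reduction gives $\mu^a\to(\pi/2)^2$ while $\mu^s$ stays larger. As $h\to\infty$, the symmetric branch collapses like $h^{-2}$.
\end{itemize}
This would certify each inequality away from the equilateral case, with the strict inequalities near $h=\sqrt3$ coming from Step 3.
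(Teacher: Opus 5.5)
The paper gives no proof of this lemma; it is quoted from Laugesen--Siudeja. Your outline therefore has to stand on its own, and it does not yet: Step~2, which carries all the weight, has no mechanism.

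\textbf{What Steps 1 and 3 give.} They produce a crossing at $h=\sqrt3$ and the order of the branches in a neighbourhood of it. For Step~3 to give an ordering rather than two upper bounds, you need more than the transplanted Rayleigh quotients. You need that $\mu^s$ and $\mu^a$ are each simple within their symmetry class at the equilateral triangle. Then the transplanted quotients are tangent to the branches there, so $\frac{d}{dh}\mu^s=-\frac{2}{h}\int (u^s_y)^2$ and $\frac{d}{dh}\mu^a=-\frac{2}{h}\int (u^a_y)^2$. The explicit eigenfunctions do give $\int (u^s_y)^2>\int (u^a_y)^2$. (For side $2$ the eigenvalue is $4\pi^2/9$, not $16\pi^2/9$.)

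\textbf{Why Step 2 fails as proposed.} Nothing you propose compares the two derivatives at an arbitrary crossing.
\begin{enumerate}
\item With the base fixed, $h\mapsto T^+$ is a vertical stretch preserving the altitude. So, exactly as in Lemmas~\ref{lem.5.a} and~\ref{lem.5.b}, both branches are strictly decreasing. Sign information from the Hopf lemma or from Lemmas~\ref{lem.x} and~\ref{lem.lip} cannot separate them.
\item The mixed-branch sub-plan cannot work. Copying Lemma~\ref{lem.inc} needs a sign for $\partial_n u'=(h\,\partial_x u'+\partial_y u')/\sqrt{1+h^2}$ on the hypotenuse, and this mixes $\partial_x u'>0$ with $\partial_y u'<0$. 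That argument would also prove increase, contrary to the true behaviour.
\item The fallback only settles $h$ near $0$ and near $\infty$. A compact range of heights remains where extra crossings are not excluded. Also, the thin limit of $\mu^a$ is $j_{0,1}^2$ (weight $1-\xi$, Dirichlet at the thick end), not $(\pi/2)^2$.
\end{enumerate}

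\textbf{The missing idea.} It is already in the paper: the quadratic-form argument of Lemma~\ref{lem.unique.crossing}, with $x$ and $y$ interchanged.
\begin{enumerate}
\item At a crossing $\Lambda=\mu^s=\mu^a$, the form $\mathcal Q(v,v)=\int|\nabla v|^2-\Lambda\int v^2$ is nonnegative for $v\in H^1(T^+)$ vanishing on the altitude $\{x=0\}$.
\item For the normalized Neumann eigenfunction $\phi$ of $T^+$, both $x\phi$ and $\partial_x\phi$ are admissible, since $\partial_x\phi=0$ on the altitude by the Neumann condition. The same computation as in Lemma~\ref{lem.unique.crossing} gives $\int(\partial_x\phi)^2\le\Lambda/8$.
\item The test function $x-\frac13$ gives $\Lambda\le 18$.
\item Poincar\'e's inequality on horizontal fibres (length at most $1$, Dirichlet at $x=0$) gives $\int(\partial_x\nu)^2\ge\pi^2/4>9/4$.
\end{enumerate}
Since $\int|\nabla\phi|^2=\int|\nabla\nu|^2=\Lambda$, this means $\int(\partial_y\phi)^2>\int(\partial_y\nu)^2$, i.e.\ $\frac{d}{dh}\mu^s<\frac{d}{dh}\mu^a$ at every crossing. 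Hence $\mu^s-\mu^a$ has at most one zero.

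Step~1 puts that zero at $h=\sqrt3$. There you should use the explicit eigenfunctions, because reflection invariance of the eigenspace alone does not force one even and one odd function. The lemma then follows, and Step~3 is no longer needed.
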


For an isosceles trapezoid, we have the following simple estimate for $\mu^a$.
\begin{Lemma} \label{lem.D}
	Let $Q_h$ be the $y$-axis symmetric isosceles trapezoid $P_1P_2P_3P_4$ with height $h$ and base length $2$,
	where $P_1$ and $P_2$ lie on the $x$-axis,
	$P_3$ lies in the first quadrant, $P_4$ lies in the second quadrant,
	and base angle $\alpha=\angle P_1P_2P_3<\pi/2$ is fixed.
	Then
	\begin{align}
		\mu^a(Q_h)< \frac{\pi^2}{4}\frac{1}{1-h\cot\alpha}.
	\end{align}
\end{Lemma}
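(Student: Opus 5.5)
The bound will come from the variational characterization \eqref{eq.a} with an explicit antisymmetric test function. The natural candidate is $\varphi(x,y)=\sin\bigl(\frac{\pi x}{2}\bigr)$. It is odd in $x$, so it lies in $H^1_a(Q_h)$, and $\int_{Q_h}\varphi=0$ holds automatically by antisymmetry. Its Rayleigh quotient is
\[
\frac{\int_{Q_h}\frac{\pi^2}{4}\cos^2\bigl(\frac{\pi x}{2}\bigr)\,dx\,dy}{\int_{Q_h}\sin^2\bigl(\frac{\pi x}{2}\bigr)\,dx\,dy}.
\]
The plan is to slice $Q_h$ horizontally. At height $y\in(0,h)$ the slice is $|x|<\ell(y):=1-y\cot\alpha$, which lies in $(0,1]$ and is at least $1-h\cot\alpha>0$ because $h<\tan\alpha$.

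The key step is a one-dimensional comparison on each slice. Write $\int_{-\ell}^{\ell}\cos^2(\pi x/2)\,dx=\ell+\frac{1}{\pi}\sin(\pi\ell)$ and $\int_{-\ell}^{\ell}\sin^2(\pi x/2)\,dx=\ell-\frac1\pi\sin(\pi\ell)$. Since $\ell\le1$, both expressions are at most $2\ell$, and the numerator is at most twice $\ell$, which is crude. I would instead use the full numerator $\int_0^h\bigl(\ell+\frac1\pi\sin\pi\ell\bigr)dy$ against the full denominator $\int_0^h\bigl(\ell-\frac1\pi\sin\pi\ell\bigr)dy$. The target is then to show
\[
\frac{\int_0^h(\ell+\tfrac1\pi\sin\pi\ell)\,dy}{\int_0^h(\ell-\tfrac1\pi\sin\pi\ell)\,dy}<\frac{1}{\ell_{\min}},\qquad \ell_{\min}=1-h\cot\alpha .
\]

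If that inequality is awkward, I would switch to the rescaled test function $\varphi=\sin\bigl(\frac{\pi x}{2\ell_{\min}}\bigr)$ on the central rectangle $|x|<\ell_{\min}$, extended by $\pm1$ outside it. This extension is Lipschitz, odd in $x$, and has zero mean. Its Dirichlet energy is $\frac{\pi^2}{4\ell_{\min}^2}\cdot h\ell_{\min}$, coming only from the rectangle. Its $L^2$ norm is $h\ell_{\min}$ from the rectangle plus the strictly positive area of the two side triangles, where $\varphi^2=1$. The resulting quotient is strictly less than $\frac{\pi^2}{4\ell_{\min}}$, which gives exactly the claimed strict bound. I expect this second route to be the one that works cleanly, so I will lead with it and keep the first only as a fallback.

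The one technical point is that $\varphi\in H^1$ and is admissible. This holds because $\varphi$ is continuous and piecewise smooth: its value $\sin(\pm\pi/2)=\pm1$ matches the constants at $|x|=\ell_{\min}$. Strictness comes from the side triangles having positive area, which follows from $\cot\alpha>0$ and $h>0$.
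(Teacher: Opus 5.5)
\textbf{The lead route does not give strict inequality.} The step that fails is the claim that the quotient of your piecewise test function is strictly below the target; in fact it equals the target. With $\ell_{\min}=1-h\cot\alpha$, the energy is $\frac{\pi^2}{4\ell_{\min}^2}\cdot h\ell_{\min}=\frac{\pi^2 h}{4\ell_{\min}}$. The two side triangles have total area $2\int_0^h(\ell(y)-\ell_{\min})\,dy=h^2\cot\alpha$. So the $L^2$ mass is $h\ell_{\min}+h^2\cot\alpha=h(\ell_{\min}+h\cot\alpha)=h$, and the Rayleigh quotient is exactly $\frac{\pi^2}{4\ell_{\min}}=\frac{\pi^2}{4}\frac{1}{1-h\cot\alpha}$. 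The positive area of the side triangles only pushes the quotient below the rectangle value $\frac{\pi^2}{4\ell_{\min}^2}$. It is used up exactly in reaching $\frac{\pi^2}{4\ell_{\min}}$. As written, you therefore only get $\mu^a(Q_h)\le\frac{\pi^2}{4}\frac{1}{1-h\cot\alpha}$, and the sentence ``strictness comes from the side triangles having positive area'' is false.

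There are two ways to repair this.

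\emph{First repair: keep your test function and add the missing step.} Suppose $\mu^a(Q_h)$ equaled its quotient. Then $\varphi$ would minimize the Rayleigh quotient over $H^1_a(Q_h)$, so $\int\nabla\varphi\cdot\nabla\psi=\mu^a\int\varphi\psi$ for every $\psi\in H^1_a(Q_h)$. Take $\psi$ odd in $x$ and supported in the side triangles, with $\psi\ge0$ and $\psi\not\equiv0$ on the right triangle. The left side vanishes because $\nabla\varphi=0$ there. The right side equals $2\mu^a\int_{\text{right triangle}}\psi\neq0$, since $\mu^a>0$. This contradiction gives strictness.

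\emph{Second repair: finish your fallback.} This is exactly the paper's proof, with the same test function $\sin(\pi x/2)$ and the same horizontal slicing. For $y>0$ one has $\sin(\pi\ell)=\sin(\pi(1-\ell))<\pi(1-\ell)$. Hence each slice satisfies $\ell+\frac1\pi\sin\pi\ell<1$ and $\ell-\frac1\pi\sin\pi\ell>2\ell-1$. Integrating over $(0,h)$ gives numerator $<h$ and denominator $>h-h^2\cot\alpha=h\ell_{\min}>0$. This is your target inequality, strict and with no extra argument needed.
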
   
\begin{proof}
	Let $\varphi=\sin(\pi x/2)$, let $k(y)=1-y\cot\alpha$,
	which describes the slanted edge,  a direct computation yields
	\begin{equation}
		\begin{aligned}
			\int_{Q_h}\varphi^2 dxdy &= 2 \int_{0}^{h} dy \int_{0}^{k(y)} \sin^2(\pi x/2) dx \\
			&=\int_{0}^{h} \left( k(y)- \pi^{-1} \sin\left(\pi k(y)\right)\right) dy \\
			&= h- \frac{h^2}{2}\cot\alpha -\pi^{-1} \int_{0}^{h}\sin(\pi\cot\alpha y)dy \\
			& > h-h^2\cot\alpha ,
		\end{aligned}
	\end{equation}
	and
	\begin{equation}
		\begin{aligned}
			\int_{Q_h}|\nabla\varphi|^2 dxdy &= \frac{\pi^2}{2} \int_{0}^{h} dy \int_{0}^{k(y)} \cos^2(\pi x/2) dx \\
			&=\frac{\pi^2}{4}\left(h-\frac{h^2}{2}\cot\alpha\right)+\frac{\pi}{4}\int_{0}^{h}\sin(\pi\cot\alpha y) dy\\
			& < \frac{\pi^2}{4}h .
		\end{aligned}
	\end{equation}
	The function $\varphi\in H^1_a(Q_h)$ is admissible in the variational characterization. Hence
	\begin{align}
		\mu^a(Q_h)\le \frac{\int_{Q_h}|\nabla\varphi|^2 dxdy}{\int_{Q_h}\varphi^2 dxdy}< \frac{\pi^2}{4}\frac{1}{1-h\cot\alpha}.
	\end{align}
\end{proof}

 Next, we have the following lower bound estimate for $\mu^s$.
\begin{Lemma} \label{lem.C}
	Let $Q_h$ be the $y$-axis symmetric isosceles trapezoid $P_1P_2P_3P_4$ with height $h\le \cot\alpha$ and base length $2$,
	where $P_1$ and $P_2$ lie on the $x$-axis,
	$P_3$ lies in the first quadrant, $P_4$ lies in the second quadrant,
	and base angle $\alpha=\angle P_1P_2P_3<\pi/2$ is fixed.
	Then
	\begin{align}
	\mu^s(Q_h)\ge\frac{1-h\cot\alpha }{1+\cot\alpha+\cot^2\alpha -h\cot\alpha} \pi^2.
	\end{align}
\end{Lemma}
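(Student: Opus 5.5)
The plan is to work on the half-domain $\Omega^+=\{(x,y):0<y<h,\ 0<x<k(y)\}$, with $k(y)=1-y\cot\alpha$ and $c:=\cot\alpha$. There $\mu^s(Q_h)$ is the second Neumann eigenvalue, i.e.
\[
\mu^s=\inf\Big\{\tfrac{\int_{\Omega^+}|\nabla\varphi|^2}{\int_{\Omega^+}\varphi^2}:\ \varphi\in H^1(\Omega^+),\ \textstyle\int_{\Omega^+}\varphi=0\Big\}.
\]
I would straighten $\Omega^+$ onto the rectangle $R=(0,1)\times(0,h)$ by the shear-type map $F(s,y)=(s\,k(y),y)$ and compare with the Neumann spectrum of $R$. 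Its second Neumann eigenvalue is $\min\{\pi^2,\pi^2/h^2\}$. The hypothesis $h\le c$ is there to keep the $y$-direction contribution from becoming the bottleneck and to control the constants below, so the relevant value should be $\pi^2$.

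Step 1 (change of variables). For $v=\varphi\circ F$ we have $dx\,dy=k\,ds\,dy$ and
\[
\varphi_x=\frac{v_s}{k},\qquad \varphi_y=v_y+\frac{s c}{k}v_s .
\]
Hence
\[
\int_{\Omega^+}|\nabla\varphi|^2=\int_R\Big[\frac{v_s^2}{k}+k\Big(v_y+\frac{sc}{k}v_s\Big)^2\Big]ds\,dy,\qquad \int_{\Omega^+}\varphi^2=\int_R k\,v^2 .
\]

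Step 2 (pointwise lower bound of the form). I would apply the weighted Young inequality $(b+\sigma a)^2\ge(1-\varepsilon)b^2-(\varepsilon^{-1}-1)\sigma^2a^2$ with $0\le s\le1$ and $1-hc\le k\le1$. This bounds the numerator below by
\[
A_\varepsilon\int_R v_s^2+B_\varepsilon\int_R v_y^2,
\]
with explicit constants, e.g. $A_\varepsilon\ge 1-(\varepsilon^{-1}-1)c^2/(1-hc)$ and $B_\varepsilon\ge(1-\varepsilon)(1-hc)$. I would leave $\varepsilon$ free for the moment and fix it in Step 4.

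Step 3 (handling the mean-zero constraint). The constraint $\int_{\Omega^+}\varphi=0$ means $\int_R k v=0$. Since $\sum$-of-squares minimization gives $\int_R k v^2=\min_m\int_R k(v-m)^2$, we get
\[
\int_R k v^2\le\int_R k(v-\bar v)^2\le\int_R(v-\bar v)^2,
\]
where $\bar v$ is the unweighted mean. The Neumann Poincaré inequality on $R$ then gives
\[
\int_R(v-\bar v)^2\le\pi^{-2}\int_R v_s^2+\tfrac{h^2}{\pi^2}\int_R v_y^2 .
\]
Combining this with Step 2 yields
\[
\mu^s\ge\pi^2\min\{A_\varepsilon,\ B_\varepsilon/h^2\}.
\]

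Step 4 (optimizing). I would choose $\varepsilon$ to balance the two terms, using $h\le c$ to see that the $v_s$-term is the binding one. The target is to recover exactly
\[
\frac{1-hc}{1+c+c^2-hc}.
\]
I expect this last step to be the main obstacle. A crude Young splitting may give a slightly worse rational function of $c$ and $h$.

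If that happens, I would first refine Step 2 by computing the smallest eigenvalue of the $2\times2$ quadratic form in $(v_s/\sqrt{k},\sqrt{k}\,v_y)$ exactly, rather than via Young. Alternatively, I would bound the form using $s\,c\le c$ together with an AM–GM step producing the term $c$ in the denominator $1+c+c^2$. As a last resort, I would reparametrize the straightening map (e.g. shear toward the slanted edge rather than scale from $x=0$) so that the constants match the stated bound.
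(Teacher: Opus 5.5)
Your Steps 1--3 are correct and match the paper's setup. The paper uses the same straightening map, on the full rectangle $(-1,1)\times(0,h)$ with $y$-axis symmetric functions. It uses the same mean-zero shift together with $1/k\ge1$ to compare $L^2$ norms. It also uses the rectangle Poincar\'e constant $\pi^2$, which is the relevant one because $h^2\le hc<1$.

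The gap is Step 4. It carries the entire quantitative content of the lemma, you leave it open, and with the constants you display it fails. Put $t=\varepsilon^{-1}-1$, $k_h=1-hc$ and $D=k_h+c+c^2$, so the target is $T=k_h/D$.
\begin{itemize}
\item Your $A_\varepsilon=1-tc^2/k_h\ge T$ requires $t\le k_h(1+c)/(cD)$.
\item Your $B_\varepsilon/h^2=k_ht/((1+t)h^2)\ge T$ requires $t\ge t/(1+t)\ge h^2/D$.
\end{itemize}
Both can hold only if $ch^2\le(1-hc)(1+c)$. This fails, e.g., at $c=1$, $h=0.9$, where your best bound is about $0.011\pi^2$ against the claimed $\pi^2/21$. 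More generally it fails as $hc\to1$ with $c\ge1$: there your bound is $O((1-hc)^2)$, while the claim is of order $1-hc$.

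The loss comes from replacing $1/k$ by $1/k_h$ in the negative term. Keep that factor instead. The pointwise $v_s^2$-coefficient is $k^{-1}(1-ts^2c^2)\ge 1-tc^2$ whenever $tc^2\le1$, since $k\le1$ and $s\le 1$. Take $A=1-tc^2$ and $B=k_ht/(1+t)$. Then the target follows for any $t$ with $h^2/(D-h^2)\le t\le(1+c)/(cD)$, and such $t$ automatically satisfy $tc^2<1$. This window is nonempty iff $h^2(cD+1+c)\le(1+c)D$. Writing $p=hc$, you can check this in two cases:
\begin{itemize}
\item For $c\ge1$, use $p<1$ and $D>c+c^2$; the condition reduces to $c^2(c^2+c-1)\ge1$.
\item For $c<1$, use $p\le c^2$ (this is $h\le c$) and $D\ge1+c$; the condition reduces to $(c+1)^2(c-1)\le0$.
\end{itemize}

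Alternatively, do what the paper does and apply Young's inequality in the $\varphi$-variables rather than the $v$-variables. Since $v_s=k\varphi_x$ and $v_y=\varphi_y-sc\varphi_x$, Young's inequality with parameter $1/c$ gives $(sc\varphi_x-\varphi_y)^2\le(c+c^2)\varphi_x^2+(1+c)\varphi_y^2$. Hence $v_s^2+v_y^2\le C\,k|\nabla\varphi|^2$ pointwise, with $C=1+(c+c^2)/k_h$. The hypothesis $h\le c$ is used exactly to guarantee $(1+c)/k_h\le C$. This is your Step 2 with $A=B=1/C$. Your Step 3, together with $h\le1$, then gives precisely $\mu^s\ge\pi^2/C$, which is the stated bound.
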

\begin{proof}
Let $R=(-1,1)\times(0,h)$ and define a map $\sigma(\xi, \eta):R\to Q_h$ by
	\begin{equation}
		\left\{
		\begin{aligned}
			&x=k(\eta)\xi, \\
			&y=\eta.
		\end{aligned}
		\right.
	\end{equation}
	where $k(\eta)=1-\eta\cot\alpha$. The map $\sigma$ preserves symmetry with respect to the $y$-axis.
	
	For any $\varphi\in H^1_s(Q_h)$ satisfying $\int_{Q_h} \varphi  dxdy = 0$,
	we define $\tilde{\varphi}=\varphi\circ \sigma\in H^1_s(R)$,
	and choose a constant $c$ such that $\int_R (\tilde{\varphi}+c)   d\xi d\eta = 0$. It follows that
	\begin{equation*}
		\begin{aligned}
			\int_R |\tilde{\varphi}+c|^2 d\xi d\eta &= \int_{Q_h} |\varphi+c|^2\frac{1}{k(y)}dxdy \\
			& > \int_{Q_h} |\varphi+c|^2 dxdy \ge \int_{Q_h} \varphi^2dxdy,
		\end{aligned}
	\end{equation*}
	where the last inequality uses $\int_{Q_h} \varphi   dxdy = 0$.

	Since 
	\begin{align*}
		&\partial_\xi(\tilde{\varphi}+c)=\partial_x\varphi \frac{\partial x}{\partial \xi}
			+ \partial_y\varphi \frac{\partial y}{\partial \xi}=\partial_x\varphi  k(\eta), \\
		&\partial_\eta(\tilde{\varphi}+c)=\partial_x\varphi \frac{\partial x}{\partial \eta}
		+ \partial_y\varphi \frac{\partial y}{\partial \eta}=-\partial_x\varphi (\cot\alpha) \xi +\partial_y\varphi .
	\end{align*}
	Then
	\begin{equation*}
		\begin{aligned}
			\int_R |\nabla(\tilde{\varphi}+c)|^2 d\xi d\eta
			&=\int_R \partial_x\varphi^2 k^2(\eta) d\xi d\eta
			+\int_R \left(\partial_x\varphi  (\cot\alpha) \xi-\partial_y\varphi  \right)^2 d\xi d\eta \\
			&=\int_{Q_h} \partial_x\varphi^2 k(y) dxdy
			+\int_{Q_h} \left((\cot\alpha)\partial_x\varphi\frac{x}{k(y)}-\partial_y\varphi \right)^2\frac{1}{k(y)}dxdy,
		\end{aligned}
	\end{equation*}
	notice that
	\begin{align*}
		\left(\cot\alpha\partial_x\varphi\frac{x}{k(y)}-\partial_y\varphi \right)^2
		\le (\cot\alpha+\cot^2\alpha)\partial_x\varphi^2 \frac{x^2}{k(y)^2}
		+\left(1+\cot\alpha\right)\partial_y\varphi^2,
	\end{align*}
	then
	\begin{equation*}
		\begin{aligned}
			\int_R |\nabla(\tilde{\varphi}+c)|^2 d\xi d\eta
			\le &\int_{Q_h} \partial_x\varphi^2 dxdy \\
			& +\int_{Q_h} \left[ (\cot\alpha+\cot^2\alpha)\partial_x\varphi^2
			+\left(1+\cot\alpha\right)\partial_y\varphi^2 \right] k^{-1}(h)dxdy \\
			\le &  \left(1+(\cot\alpha+\cot^2\alpha)k^{-1}(h)\right) \int_{Q_h} |\nabla\varphi|^2 dxdy.
		\end{aligned}
	\end{equation*}
    In the last inequality we also used $h\le\cot\alpha$. Indeed, writing $t=\cot\alpha$, one has $k(h)=1-ht$ and
\[
1+t\le 1+\frac{t+t^2}{k(h)},
\]
which is equivalent to $h\le t$.

    By separation of variables, we have $\mu^s(R)=\min\left\{\pi^2,\frac{\pi^2}{h^2}\right\}.$ Since $h<\tan\alpha$ and $h\le\cot\alpha$, we have $h\le1$.
Hence, for the symmetric zero-mean function
$\tilde{\varphi}+c$, the Poincar\'e inequality gives
\[
\pi^2 \int_R |\tilde{\varphi}+c|^2\,d\xi d\eta
\le
\int_R |\nabla(\tilde{\varphi}+c)|^2\,d\xi d\eta.
\]
	Thus for any $\varphi\in H^1_s(Q_h)$ satisfying $\int_{Q_h} \varphi  dxdy = 0$, we have
	\begin{align*}
		\pi^2 \int_{Q_h} \varphi^2dxdy < 
		\left(1+(\cot\alpha+\cot^2\alpha)k^{-1}(h)\right) \int_{Q_h} |\nabla\varphi|^2 dxdy.
	\end{align*}
	That is
	\begin{align}
		\mu^s(Q_h)\ge \frac{k(h)}{k(h)+\cot\alpha+\cot^2\alpha} \pi^2.
	\end{align}
	\end{proof}

Combining the upper bound estimate for $\mu^a$
and the lower bound estimate for $\mu^s$, we obtain the following comparison result for sufficiently small height.
\begin{Corollary}\label{cor.a}
	Let $Q_h$ be the isosceles trapezoid $P_1P_2P_3P_4$ with $|P_1P_2|>|P_3P_4|$ and height $h$,
	where $P_1$ and $P_2$ lie on the $x$-axis,
	$P_3$ lies in the first quadrant, $P_4$ lies in the second quadrant, and base angle $\alpha=\angle P_1P_2P_3<\pi/2$ is fixed.
	Then there exists a constant $h_0$, such that for all $h\le h_0$, we have
	\begin{align}
		\mu^a(Q_h) < \mu^s(Q_h).
	\end{align}
\end{Corollary}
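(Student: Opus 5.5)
The plan is to compare the explicit upper bound for $\mu^a(Q_h)$ from Lemma \ref{lem.D} with the explicit lower bound for $\mu^s(Q_h)$ from Lemma \ref{lem.C}, and let $h\to0$ in both.

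Write $t=\cot\alpha>0$, which is fixed since $\alpha$ is fixed. For every $h\le \min\{t,\tan\alpha\}$, so that both lemmas apply and $Q_h$ is a genuine trapezoid, the two lemmas give
\begin{equation*}
\mu^a(Q_h)<\frac{\pi^2}{4}\,\frac{1}{1-ht},\qquad
\mu^s(Q_h)\ge \frac{1-ht}{1+t+t^2-ht}\,\pi^2 .
\end{equation*}
It therefore suffices to find $h_0>0$ such that, for all $0<h\le h_0$,
\begin{equation*}
\frac{1}{4(1-ht)}\le \frac{1-ht}{1+t+t^2-ht}.
\end{equation*}

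As $h\to0$, the left side tends to $1/4$ and the right side tends to $1/(1+t+t^2)$. The limiting inequality $1/4<1/(1+t+t^2)$ holds exactly when $t^2+t-3<0$, that is, when $t<(\sqrt{13}-1)/2$. This is only a restriction on $\alpha$, and it is the main obstacle: Lemma \ref{lem.C} is too lossy when $\alpha$ is small, i.e. when $t$ is large. In that regime, however, $h<\tan\alpha$ is itself small. So I would not try to prove the statement with a single uniform argument; instead I would split into two cases.

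\textbf{Case 1: $t^2+t<3$.} Both sides of the target inequality are continuous in $h$, and the strict inequality holds at $h=0$. Hence there is $h_0\in(0,\min\{t,\tan\alpha\})$ for which it persists on $(0,h_0]$. Combining this with the strict inequality in Lemma \ref{lem.D} gives $\mu^a<\mu^s$.

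\textbf{Case 2: $t^2+t\ge3$.} Here I would replace the crude bound in Lemma \ref{lem.C} by a sharper version of the same change of variables. Expanding the square exactly, the cross term $-2t\,\partial_x\varphi\,\partial_y\varphi\, x/k$ is the problematic one. I would bound it with Young's inequality using a parameter $\varepsilon$, together with $|x|/k\le1$, to get
\begin{equation*}
\int_R|\nabla(\tilde\varphi+c)|^2\le \int_{Q_h}\Bigl[(k+t^2k^{-1}+t\varepsilon^{-1}k^{-1})(\partial_x\varphi)^2+(1+t\varepsilon)k^{-1}(\partial_y\varphi)^2\Bigr].
\end{equation*}
This alone still leaves a factor of order $t^2$ in the $x$-direction. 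So in this case I would instead use the crude direct lower bound for thin domains: the symmetric class with mean zero has $\mu^s\ge \mu_2$ of the half-domain $Q_h^+$. As $h\to0$, the half-domain collapses to the segment $[0,1]$, where the second Neumann eigenvalue is $\pi^2$. More precisely, I would take a mean-zero symmetric $\varphi$ and apply the one-dimensional Poincaré inequality on each horizontal slice $\{|x|<k(y)\}$, handling the variation of the slice means in $y$ by the $\partial_y\varphi$ term, to obtain $\mu^s(Q_h)\to\pi^2$ as $h\to0$. On the other side, Lemma \ref{lem.D} gives $\limsup_{h\to0}\mu^a\le\pi^2/4$. Hence $\mu^a<\mu^s$ for all sufficiently small $h$.

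This thin-domain limit argument in fact covers Case 1 as well, so I would present it as the primary proof and keep the explicit computation of Case 1 only as a quantitative illustration.
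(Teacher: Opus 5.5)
Your proof is correct, and your Case 1 is exactly the paper's argument for $\alpha>\pi/4$. There $t<1$, so $t+t^2<3$, and the paper also extracts the explicit value $h_0=\min\{t,\tan\alpha,(7-\sqrt{1+16(t+t^2)})/(8t)\}$ from the roots of the numerator quadratic.

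You differ in the small-angle regime. The paper splits at $\alpha=\pi/4$, and for $\alpha\le\pi/4$ it does not estimate $\mu^s$ at all. In that range $Q_h$ is a lip domain, so by Lemma~\ref{lem.lip} the second eigenfunction is simple and satisfies $\partial_x u>0$ in $Q_h$. A symmetric $u$ would force $\partial_x u=0$ at the midpoint $O$ of $P_1P_2$. The Hopf lemma applied to $\partial_x u$ then contradicts $\partial_y\partial_x u=0$ there, which holds by the Neumann condition. This gives $\mu^a(Q_h)<\mu^s(Q_h)$ for every admissible $h$, not just small $h$, but it relies on the Atar--Burdzy monotonicity theorem.

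Your thin-domain bound $\liminf_{h\to0}\mu^s(Q_h)\ge\pi^2$ is elementary, self-contained and uniform in $\alpha$. The price is a purely asymptotic conclusion with a non-explicit $h_0$, unless you track constants.

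When you write the slice argument out, note that the slice length $k(y)=1-yt$ varies with $y$. For even $\varphi$ the slice mean therefore satisfies
\[
m'(y)=\frac{1}{2k(y)}\int_{-k(y)}^{k(y)}\partial_y\varphi\,dx-\frac{t}{k(y)}\bigl(\varphi(k(y),y)-m(y)\bigr).
\]
The endpoint term must be controlled by $\|\partial_x\varphi(\cdot,y)\|_{L^2}$ through a one-dimensional trace bound, not by $\partial_y\varphi$ alone. Combine this with the even-function Poincar\'e inequality on each slice (constant $(k/\pi)^2\le\pi^{-2}$) and a Poincar\'e inequality on $(0,h)$ for $m$, which has weighted mean zero. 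You then get $\int_{Q_h}\varphi^2\le\bigl(\pi^{-2}+C(t)h^2\bigr)\int_{Q_h}|\nabla\varphi|^2$, which is more than the $\liminf_{h\to0}\mu^s(Q_h)>\pi^2/4$ you actually need.
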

\begin{proof}
	If the base angle $\alpha\le \frac{\pi}{4}$, then $Q_h$ is a lip domain. Lemma \ref{lem.lip}
	implies that the second Neumann eigenvalue $\mu_2$ is simple, possibly after a sign change,
	  $\partial_x u >0$ in $Q_h$.
	Let $O$ be the midpoint of $P_1P_2$,
	if $\partial_x u =0$ at $O$,  then  $\partial_x u$ attains its minimum value zero  at $O$,
    and Hopf Lemma \ref{lem2.4} yields a contradiction.
	Thus $O$ is not a critical point. If $u$ is
	symmetric about the $y$-axis, then $O$ would be a critical point, which is a contradiction.
	Hence $u$ is antisymmetric about the $y$-axis,
    which implies that
	$\mu^a(Q_h)< \mu^s(Q_h)$ for all $h>0$.
    We consider the following functions.
\begin{align}
    f(h)&=\pi^2\frac{1-h \cot\alpha}{1+\cot\alpha+\cot^2\alpha-h\cot\alpha},
    \quad g(h)=\frac{\pi^2}{4}\frac{1}{1-h\cot\alpha}.
\end{align}
If $\cot\alpha+\cot^2\alpha<3$, then $g(0)<f(0)$.
By the continuity of $f(h)$ and $g(h)$, there exists a constant $h_0$, such that $g(h)\le f(h)$ holds for $h\le h_0$. In fact, by direct calculation,  we define the following function $F(h)$.
 \begin{equation*}
        \begin{aligned}
           F(h)&=f(h)-g(h)=\pi^2\frac{1-h \cot\alpha}{1+\cot\alpha+\cot^2\alpha-h\cot\alpha}-\frac{\pi^2}{4}\frac{1}{1-h\cot\alpha}\\
            &=\pi^2\frac{4(1-h\cot\alpha)^2-1-\cot\alpha-\cot^2\alpha+h\cot\alpha}{4(1-h\cot\alpha)(1+\cot\alpha+\cot^2\alpha-h\cot\alpha)}\\
            &=\pi^2\frac{4h^2\cot^2\alpha -7h\cot\alpha +3-\cot\alpha-\cot^2\alpha}{4(1-h\cot\alpha)(1+\cot\alpha+\cot^2\alpha-h\cot\alpha)}.
        \end{aligned}
    \end{equation*}

Since the denominator of $F(h)$ is greater than 0, we only need to consider the numerator. The numerator is a quadratic polynomial in $h$, and its discriminant is $\Theta=49\cot^2\alpha-16\cot^2\alpha(3-\cot\alpha-\cot^2\alpha)$. If $0<\cot\alpha+\cot^2\alpha <3$, then $0<\Theta<49\cot^2\alpha$, so there are two positive roots:
    \begin{equation}
        h=\frac{7\cot\alpha\pm\sqrt{\Theta}}{8\cot^2\alpha}.
    \end{equation}

    Set $h_0=\min\{\cot\alpha,\tan\alpha,\frac{7-\sqrt{1+16(\cot\alpha+\cot^2\alpha)}}{8\cot\alpha}\}$, then for all $h\leq h_0$, $F(h)\geq0.$ By Lemmas \ref{lem.D} and \ref{lem.C}, we have
    \begin{equation}
        \mu^a(Q_h)<g(h)\leq f(h)\leq\mu^s(Q_h).
    \end{equation}
    Thus if $\alpha>\frac{\pi}{4}$, then
    $\cot\alpha+\cot^2\alpha<3$ holds, and
	the conclusion still holds.
\end{proof}

Combining the monotonicity results established
above, we will show that the symmetric and antisymmetric eigenvalue branches can cross at most once.
\begin{Lemma}\label{lem.imp}
	Let $Q_h$ be the $y$-axis symmetric isosceles trapezoid $P_1P_2P_3P_4$ of height $h$,
	where $P_1$ and $P_2$ are fixed points on the $x$-axis,
	$P_3$ lies in the first quadrant, $P_4$ lies in the second quadrant,
	and base angle $\angle P_1P_2P_3$ is fixed. Then there exists at most one $\hat{h}$ such that $\mu^s(Q_{\hat{h}}) = \mu^a(Q_{\hat{h}})$. In fact,  $\mu_h^a$ is strictly increasing with respect to $h$, while $\mu_h^s$ is strictly decreasing after the first intersection of the two branches $\mu_h^s$ and $\mu_h^a$.
\end{Lemma}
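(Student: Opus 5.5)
Write $Q_h^+=Q_h\cap\{x>0\}$. This is a right trapezoid with vertices $O=(0,0)$, $P_2=(1,0)$, $P_3$ and $(0,h)$. Its base angle $\angle OP_2P_3=\alpha$ is fixed, and $P_2$ is fixed. By the discussion preceding Lemma \ref{lem.tri}, $\mu_h^a$ is the first eigenvalue of the mixed problem \eqref{eq.mix2} on $Q_h^+$, with the Dirichlet condition on the vertical edge. Likewise, $\mu_h^s$ is the second Neumann eigenvalue $\mu(Q_h^+)$.

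\textbf{Monotonicity of $\mu^a_h$.} Applying Lemma \ref{lem.inc} directly to $Q_h^+$ shows that $h\mapsto\mu_h^a$ is strictly increasing on $(0,\tan\alpha)$. This part needs nothing further.

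\textbf{Behaviour of $\mu^s_h$ after a crossing.} I would use Lemma \ref{lem.dec}. Its only hypothesis is that the nodal line of the second Neumann eigenfunction $v_h$ of $Q_h^+$ joins the slanted edge $P_2P_3$ to the vertical edge $x=0$. By Lemma \ref{lem.y}, $v_h$ is simple and satisfies $\partial_x v_h>0$, $\partial_y v_h<0$ on $\overline{Q_h^+}$ minus the vertices. Hence every level curve is a graph $y=\gamma(x)$ with $\gamma$ increasing. The nodal line therefore enters through either the bottom edge or the vertical edge, and leaves through either the top edge or the slanted edge. I will call it \emph{transversal} if it goes from the vertical edge to the slanted edge, and \emph{horizontal} otherwise.

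The key claim is the following: whenever $\mu_h^s\le\mu_h^a$, the nodal line of $v_h$ runs from the vertical edge to the slanted edge. I would prove it by contradiction. Suppose the nodal line instead meets the bottom edge $OP_2$ or the top edge. Then the nodal set separates the vertical edge $x=0$ from the corner $P_2$ in a suitable way. Restricting $v_h$ to the nodal subdomain $D$ adjacent to $x=0$ gives a test function with Neumann condition on $\partial D\cap\partial Q_h^+$ and Dirichlet condition on the nodal arc. I would then use a domain-monotonicity comparison, in the spirit of the proof of Lemma \ref{lem.inc}, to show $\mu_h^a<\mu_h^s$. The comparison would be between the mixed problem on $Q_h^+$ (Dirichlet on $x=0$) and the mixed problem on the complementary nodal domain (Dirichlet on the nodal arc). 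This contradicts $\mu_h^s\le\mu_h^a$.

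An alternative route is through the signs. Since $v_h$ has $\partial_xv_h>0$, it is negative near $x=0$. If the nodal line started on the bottom edge, the region near $O$ would be negative and the region near $P_2$ positive. Then $w=\max(v_h,0)$, suitably extended, might be compared with the mixed first eigenfunction through the BNV principle (Lemma \ref{lem2.2}).

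\textbf{Conclusion.} Let $h_*$ be the first height at which $\mu^s_{h}=\mu^a_{h}$. At $h_*$ the claim applies, so Lemma \ref{lem.dec} gives $d\mu^s/dh<0$ there. By continuity, the set of $h>h_*$ with $\mu^s_h\le\mu^a_h$ is open-closed relative to $[h_*,\tan\alpha)$. On it $\mu^s$ is strictly decreasing and $\mu^a$ is strictly increasing, so $\mu^s<\mu^a$ for all $h>h_*$. In particular no second crossing occurs. Before $h_*$ the branches do not meet by definition, so $\hat h=h_*$ is the unique crossing height.

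\textbf{Main obstacle.} I expect the main obstacle to be the nodal-line claim above: that the inequality $\mu^s\le\mu^a$ forces the nodal line to join $x=0$ to $P_2P_3$. My first attempt would be the restriction and Rayleigh-quotient comparison sketched under \emph{Behaviour of $\mu^s_h$ after a crossing}, using the monotonicity of $v_h$ from Lemma \ref{lem.y}.
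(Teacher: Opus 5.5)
Your reduction to the half trapezoid, the use of Lemma \ref{lem.inc} for $\mu^a_h$, the use of Lemma \ref{lem.dec} for $\mu^s_h$, and the continuation argument past the first crossing all match the paper. Your observation that Lemma \ref{lem.y} makes the nodal set a single increasing arc is a clean substitute for the paper's appeal to \cite[Theorem~5.2]{Judge2020Ann}. Your restriction argument also correctly disposes of the two configurations in which the arc starts on the bottom edge $OP_2$. There, the nodal domain not meeting $x=0$, with $v_h$ extended by zero, is admissible for $\mu^a_h$. So $\mu^a_h\le\mu^s_h$, and equality is excluded by unique continuation. This is a variational replacement for the paper's Courant argument, in which reflecting such an arc across $OC$ gives three nodal domains in $Q_h$.

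The gap is the third configuration, which you lump under ``meets the top edge''. Here the arc runs from a point $(0,y_0)$ of the vertical edge to the top edge, cutting off the corner $C=(0,h)$ where $v_h$ attains its minimum. This is fully compatible with the sign pattern of Lemma \ref{lem.y}. But now \emph{both} nodal domains meet $x=0$: the negative one along $\{0\}\times(y_0,h)$ and the positive one along $\{0\}\times(0,y_0)$. So no restriction of $v_h$ vanishes on the Dirichlet edge, and your comparison with the mixed problem yields nothing. Courant's theorem does not help either, since after reflection across $OC$ there are still only two nodal domains.

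The paper rules this case out by a different mechanism. When $\mu^s_h\le\mu^a_h$, the even extension of $v_h$ is a genuine second Neumann eigenfunction of the whole convex trapezoid $Q_h$. Its reflected nodal arc would then have both endpoints on the single Neumann edge $P_3P_4$, which \cite[Lemma~3.3]{Judge2020Ann} forbids. Without this, or some other argument exploiting $\mu^s_h=\mu_2(Q_h)$, your key claim is not established, and so neither is the applicability of Lemma \ref{lem.dec} after the first crossing.

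Separately, the assertion in your alternative route that $\partial_xv_h>0$ makes $v_h$ negative near $x=0$ is false: in the desired configuration $v_h>0$ near $O$.
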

\begin{proof}
    Let $O$ and $C$ be the midpoints of $P_1P_2$
    and $P_3P_4$ respectively, and let $u$ be the
    eigenfunction corresponding to the smallest
    symmetric eigenvalue $\mu^s(Q_h)$.
    If $\mu^s(Q_h) \le \mu^a(Q_h)$, then $u$ is the
    second Neumann eigenfunction of $Q_h$ and is
    symmetric with respect to the $y$-axis.
    Consequently, $\mu^s(Q_h)$ is equal to the second
    Neumann eigenvalue of the right trapezoid
    $OP_2P_3C$, and $u$ is the corresponding second
    Neumann eigenfunction on $OP_2P_3C$.

By \cite[Theorem~5.2]{Judge2020Ann}, the nodal set of $u$ in
$OP_2P_3C$ consists of a single simple arc. Moreover, by
\cite[Lemma~3.3]{Judge2020Ann}, this arc cannot have both endpoints
on the same Neumann edge.

We claim that the nodal arc connects $P_2P_3$ to $OC$. Indeed, if its
endpoints lie on two adjacent outer edges or on the two bases,
reflection across $OC$ produces at least three nodal domains in $Q_h$,
contrary to Courant's nodal domain theorem. If one endpoint lies on
$OC$ and the other lies on one of the bases, reflection across $OC$
produces a nodal arc in $Q_h$ whose two endpoints lie on the same
Neumann edge, contradicting \cite[Lemma~3.3]{Judge2020Ann}. The
remaining cases in which both endpoints lie on the same edge are
excluded by the same result. Therefore, the nodal arc in $OP_2P_3C$
connects the slanted edge $P_2P_3$ to the symmetry edge $OC$. Hence, Lemma \ref{lem.dec} implies that
    $\mu^s(Q_h)$ is strictly
    decreasing in $h$, and Lemma \ref{lem.inc}
    implies that $\mu^a(Q_h)$ is strictly increasing in $h$.

    Corollary \ref{cor.a} implies that for sufficiently small $h$,
    $\mu^s(Q_h) \neq \mu^a(Q_h)$. Let $\hat{h} > 0$ be the first value (if it exists) such that
    $\mu^s(Q_{\hat{h}}) = \mu^a(Q_{\hat{h}})$.
    The monotonicity of $\mu^s(Q_h)$ and $\mu^a(Q_h)$
     implies that $\mu^s(Q_h) < \mu^a(Q_h)$ holds for all $h > \hat{h}$.
    This completes the proof.
\end{proof}

	Next we will give the proof of Theorem \ref{thm1.1}.

\begin{proof} [Proof of Theorem \ref{thm1.1}]
		Let $Q_h$ be the isosceles trapezoid, base angle $\alpha$ of $Q_h$ is fixed, and the height of the right-angled trapezoid is $h$,
		assume that $O$ and $C$ are the midpoints of $P_1P_2$ and $P_3P_4$ respectively.
		Suppose that $\mu^s_h$ and $\mu^a_h$ are the smallest symmetric eigenvalue
		and the smallest antisymmetric eigenvalue on $Q_h$ respectively. Obviously, $\mu^s_h$ is equal to the second Neumann eigenvalue of
		right trapezoid $OP_2P_3C$, and $\mu^a_h$ is equal to the first eigenvalue of
		the mixed boundary problem \eqref{eq.mix2} on the right trapezoid $OP_2P_3C$. Moreover, 
		$\mu^s_h$ and $\mu^a_h$ are simple. The standard spectral stability theorem for the Neumann Laplacian under Hausdorff convergence and uniform cone condition (see \cite[Chapter 2]{Henrot2006}) gives their continuity on the nondegenerate interval $0<h<\tan\alpha$,
        as well as at the degenerate endpoint $h=\tan\alpha$.
        
Since the Neumann Laplacian commutes with
reflection across the $y$-axis, every Neumann eigenfunction can be
decomposed into its symmetric and antisymmetric parts. Consequently, $\mu_2(Q_h)=\min\{\mu_h^s,\mu_h^a\}.$  By Lemma~\ref{lem.y}, $\mu_h^s$ is simple, while the simplicity of
$\mu_h^a$ follows from the simplicity of the first eigenvalue of the
mixed problem. Hence, if
$\mu_h^s<\mu_h^a,$
then $\mu_2(Q_h)=\mu_h^s$ is simple and its eigenfunction is symmetric
with respect to the $y$-axis. Similarly, if
$\mu_h^a<\mu_h^s,$
then $\mu_2(Q_h)=\mu_h^a$ is simple and its eigenfunction is
antisymmetric. If
$\mu_h^s=\mu_h^a,$ then the
second eigenspace has dimension exactly two and is spanned by one
symmetric eigenfunction and one antisymmetric eigenfunction (\cite[Corollary 5.4]{Judge2020Ann}), and the corresponding symmetric and antisymmetric eigenfunctions are
linearly independent. 
        
		(1) If the base angle $\alpha{<} \frac{\pi}{3}$, {the continuity of the eigenvalues together with Lemma \ref{lem.tri} implies that}
		\begin{align}
			\lim_{h\to \tan\alpha}\mu^a_h=\mu^a(T) {<} \mu^s(T)=\lim_{h\to \tan\alpha}\mu^s_h .
		\end{align}
		By Lemma \ref{lem.imp}, we have that
		\begin{align}
			\mu^a_h < \mu^s_h \text{ holds for all } h<\tan\alpha,
		\end{align}
 so the second Neumann eigenfunction of $Q_h$ is antisymmetric.

If $\alpha=\pi/3$, then the limiting triangle is equilateral and
\[
\lim_{h\to\tan\alpha}(\mu_h^a-\mu_h^s)=0.
\]
Corollary~\ref{cor.a} gives $\mu_h^a<\mu_h^s$ for sufficiently small $h$. If a crossing occurred at some $\hat h<\tan\alpha$, Lemma~\ref{lem.imp} would give $\mu_h^s<\mu_h^a$ for every $h>\hat h$. In that regime Lemmas~\ref{lem.dec} and~\ref{lem.inc} show that $\mu_h^s$ is strictly decreasing and $\mu_h^a$ is strictly increasing, so $\mu_h^a-\mu_h^s$ is positive and strictly increasing on $(\hat h,\tan\alpha)$. This contradicts the limit above. Hence no crossing occurs when $\alpha=\pi/3$, and again $\mu_h^a<\mu_h^s$ for every $0<h<\tan\alpha$.

		(2) If the base angle $\alpha> \frac{\pi}{3}$. Together with the continuity of eigenvalues and Lemma \ref{lem.tri} imply that
		\begin{align}
			\lim_{h\to \tan\alpha}\mu^s_h < \lim_{h\to \tan\alpha}\mu^a_h .
		\end{align}
		On the other hand, Corollary \ref{cor.a} implies that when $h$ is sufficiently small, 
		we have $\mu^a_h < \mu^s_h$.
        Lemma \ref{lem.imp} implies that
		there exists a critical height $\hat{h}(\alpha)$, such that
		\begin{align}
			\mu^a_h < \mu^s_h \text{ if } h<\hat{h}(\alpha), \quad \mu^s_h < \mu^a_h \text{ if } h>\hat{h}(\alpha),
			\quad \mu^s_h = \mu^a_h \text{ if } h=\hat{h}(\alpha).
		\end{align}
		Thus if $h<\hat{h}(\alpha)$, then the second Neumann eigenfunction is antisymmetric, if $h>\hat{h}(\alpha)$, then the second Neumann eigenfunction is symmetric, and if $h=\hat{h}(\alpha)$, then the multiplicity of second Neumann eigenvalue is 2.
        
		Lastly, the existence or non-existence of critical points, together with the relevant monotonicity properties
		follows directly from Lemma \ref{lem.y} and \ref{lem.x}.
	\end{proof}

Some of the symmetry properties stated below are already known in related cases. In particular, Kennedy \cite{Kennedy2018AM} established the central antisymmetry of second Neumann eigenfunctions on simply connected planar centrally symmetric domains, while Siudeja \cite{Siudeja2016PAMS} studied symmetry properties of eigenfunctions on rhombuses through mixed Dirichlet-Neumann eigenvalue comparisons. Since  rhombus is a basic symmetric quadrilateral relevant to our setting, we restate the corresponding result here for completeness.

\begin{Proposition}\label{rhombus}
	Let $Q$ be the non-square rhombus $P_1P_2P_3P_4$, where $P_1$ is the origin,
	$P_2$ lies on the positive $x$-axis,
	$P_3$ and $P_4$ lie in the first quadrant.
	Then the second Neumann eigenfunction  $u$ is symmetric about $P_1P_3$,
	 is antisymmetric about $P_2P_4$ and $P_2P_4$ is the nodal line of $u$.
\end{Proposition}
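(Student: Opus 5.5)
The plan is to reduce the statement to a comparison between two mixed eigenvalues on a quarter of the rhombus, in the spirit of the symmetry decomposition used for Theorem~\ref{thm1.1}. Write $O$ for the center of $Q$. Since $P_3,P_4$ lie in the first quadrant, the angle $\theta$ at $P_1$ is acute, so $P_1P_3$ is the long diagonal (half-length $b$) and $P_2P_4$ is the short one (half-length $a<b$).

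\emph{Step 1 (reduction to two classes).} The reflections across the two diagonals commute with the Neumann Laplacian. Hence the second eigenspace splits into the four classes symmetric or antisymmetric with respect to $P_1P_3$ and to $P_2P_4$. By Kennedy's central antisymmetry result \cite{Kennedy2018AM}, every second eigenfunction is odd under the rotation by $\pi$ about $O$. This rotation is the composition of the two diagonal reflections, so only two classes survive:
\begin{itemize}
\item class $A$: symmetric about $P_1P_3$ and antisymmetric about $P_2P_4$;
\item class $B$: antisymmetric about $P_1P_3$ and symmetric about $P_2P_4$.
\end{itemize}
Restricting to the right triangle $T=OP_1P_2$, the lowest eigenvalue $\lambda_A$ of class $A$ is the first eigenvalue of the mixed problem on $T$ with Dirichlet condition on the short leg $OP_2$ and Neumann condition on $OP_1\cup P_1P_2$. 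The lowest eigenvalue $\lambda_B$ of class $B$ is the analogous quantity with Dirichlet condition on the long leg $OP_1$. Thus $\mu_2(Q)=\min\{\lambda_A,\lambda_B\}$, and it suffices to prove $\lambda_A<\lambda_B$.

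\emph{Step 2 (consequences once $\lambda_A<\lambda_B$).} The first mixed eigenvalue is simple, and its eigenfunction can be chosen positive in $T$ by Lemma~\ref{lem2.1}. Then $\mu_2$ is simple and $u$ is of class $A$. Extending $u$ evenly across $P_1P_3$ and oddly across $P_2P_4$, it is positive on the side of $P_2P_4$ containing $P_1$ and negative on the other side. Hence its nodal set is exactly the segment $P_2P_4$, which gives all three assertions of the proposition.

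\emph{Step 3 (the comparison $\lambda_A<\lambda_B$; main obstacle).} This is a statement purely about a right triangle with legs $a<b$: putting the Dirichlet condition on the shorter leg gives the smaller first mixed eigenvalue. This is exactly Siudeja's comparison \cite{Siudeja2016PAMS}, which I would invoke. For a self-contained argument, I would first double $T$ across the Neumann leg. This turns $\lambda_A$ into the first mixed eigenvalue of the isosceles triangle $P_1P_2P_4$ with Dirichlet condition on its short base, and turns $\lambda_B$ into the corresponding eigenvalue of the obtuse isosceles triangle $P_1P_2P_3$ with Dirichlet condition on its long base.

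I would then take the positive eigenfunction $w$ of the $B$-problem and transplant it to the $A$-triangle. Place $T$ with legs on the axes and use the linear map $(x,y)\mapsto(\tfrac{b}{a}x,\tfrac{a}{b}y)$ composed with the swap of the axes. This map carries $T$ onto itself with the legs exchanged, and it has Jacobian $1$, so the $L^2$ norm is preserved. The transplanted function is admissible for the $A$-problem. Its Dirichlet energy becomes $\int (\tfrac{b^2}{a^2}w_y^2+\tfrac{a^2}{b^2}w_x^2)$ instead of $\int (w_x^2+w_y^2)$, where $x$ is the direction normal to the Dirichlet leg of $w$.

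It then suffices to show that $w$ varies mostly in the direction normal to its Dirichlet leg, i.e.\ to establish a bound of the form $\int w_y^2\le \tfrac{a^2}{b^2}\int w_x^2$ (with strict inequality somewhere). I would try to obtain this from the directional monotonicity of Lemma~\ref{lem.x}, applied in the degenerate right-trapezoid limit, which gives $\partial_x w>0$ and $\partial_y w<0$. I would combine this with an integration by parts of $w_xw_y$ against the hypotenuse, whose normal is proportional to $(b,a)$. This energy-anisotropy inequality is the step I expect to be hardest. If it cannot be closed directly, I will fall back on citing \cite{Siudeja2016PAMS} for Step~3.
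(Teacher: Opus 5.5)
Your argument is correct once Step~3 is supplied by Siudeja's comparison: Dirichlet on the shorter leg of a non-isosceles right triangle gives the smaller first mixed eigenvalue. It is not, however, the paper's route.

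The paper also starts from Kennedy's central antisymmetry. It then excludes antisymmetry about $P_1P_3$ using the lip-domain theorem (Lemma~\ref{lem.lip}), not an eigenvalue comparison. With the long diagonal horizontal, the sides of $Q$ have slopes $\pm a/b$ with $a/b<1$. So $Q$ is a non-rectangular lip domain, and $\partial_{e_1\pm e_2}u>0$, i.e.\ $\partial_xu>|\partial_yu|$ in $Q$. This is incompatible with $u\equiv0$ on the long diagonal.

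Your reduction to the quarter triangle $OP_1P_2$, with a comparison of the two mixed branches, is essentially Siudeja's original proof. It fits the branch-comparison philosophy of the rest of the paper. It gives a strict gap $\lambda_A<\lambda_B$ and reads the nodal line off the positivity of the first mixed eigenfunction. The cost is that it rests on a nontrivial comparison theorem and needs Kennedy (or the extra inequality $\lambda^{S}(T)<\mu_2(T)$) to discard the doubly symmetric class.

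The lip-domain route uses only the Atar--Burdzy monotonicity and yields more. Simplicity and both symmetry types follow from Lemma~\ref{lem.lip} alone: strict monotonicity in $x$ rules out symmetry about the short diagonal as well as antisymmetry about the long one, so Kennedy is in fact dispensable there. The nodal set is then the short diagonal, because $u$ is strictly increasing along every chord parallel to the long diagonal.

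Do not invest in the self-contained version of Step~3. The sufficient inequality $\int w_y^2\le\frac{a^2}{b^2}\int w_x^2$ is false for thin quarter triangles. Take $a\ll b$, and let $x$ be the direction normal to the Dirichlet (long) leg. The chord of $T$ in the $x$-direction at position $y$ along that leg has length $a(1-y/b)$. So $\int w_x^2\approx\pi^2/(4a^2)$. Since the width is maximal at the short (Neumann) leg, $w$ concentrates there on the Airy scale $\ell\sim(a^2b)^{1/3}$.

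Hence $\int w_y^2/\int w_x^2\sim(a/b)^{2/3}\gg(a/b)^2$. The transplanted test function then has Rayleigh quotient of order $b^{4/3}a^{-10/3}\gg\lambda_B$, even though in truth $\lambda_A\approx j_{0,1}^2/b^2\ll\lambda_B$. At $a=b$ the inequality does hold strictly, so the transplantation may work near the square, but not uniformly.

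No use of $\partial_xw>0$ and $\partial_yw<0$ together with integration by parts can establish an inequality that is false. Step~3 therefore genuinely depends on the citation, or on replacing it by the lip-domain argument above.
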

\begin{proof}
    Since the rhombus has two axes of symmetry, i.e., longer diagonal $P_1P_3$ and shorter diagonal $P_2P_4$,
    then $u$ must be either symmetric or antisymmetric about each diagonal.
	Theorem 1.3 in \cite{Kennedy2018AM} implies that $u$ is centrally antisymmetric about its center,
	thus $u$ must be symmetric with respect to one diagonal, and antisymmetric about the other.

    If $u$ is symmetric about $P_2P_4$ and is antisymmetric about $P_1P_3$, then the nodal line of $u$ is $P_1P_3$,
    which leads to a contradiction with the facts $\partial_x u>0$
    and $\partial_y u>0$ in Lemma \ref{lem.lip}.
	Thus, we have that $u$ is symmetric about $P_1P_3$,
	and antisymmetric about $P_2P_4$.
\end{proof}

\section{Kites case}
~~~We use the word ``kite'' for the reflection of the triangle with vertices $P_1=(0,0)$, $P_3=(1,0)$, and $P_4=(a,h)$ across the $x$-axis. Its geometric type depends on $a$: for $0<a<1$ it is a convex kite; for $a=1$ the point $P_3$ lies on the segment joining $P_2$ to $P_4$ and the domain degenerates to an isosceles triangle; for $a>1$ it is a concave kite with a reentrant vertex at $P_3$.

Let $T$ be a triangle with longest edge $L$, medium edge $M$ and shortest edge $S$,
denote $\lambda^X(T)$,  also abbreviated as  $\lambda^X$,
to be the first eigenvalue of the mixed boundary problem
\begin{equation} \label{eq.5.1}
	\left\{  
	\begin{aligned}
		&\Delta u + \lambda u =0 \text{ in } T, \\
		&u=0 \text{ on } X, \\
		&\partial_n u =0 \text{ on } \partial T\backslash X,
	\end{aligned}
	\right.
\end{equation}
where $X$ is any one edge of $T$, we refer to the edge $X$ as the Dirichlet edge,
and the other two edges as the Neumann edges, the vertex opposite $X$ as the
Neumann vertex and other two vertices as the mixed vertices.

We next establish the monotonicity of the second Neumann eigenvalue with respect
to the height of the triangle.

\begin{Lemma}\label{lem.5.a}
    Let $T_h$ be the non-equilateral triangle $P_1P_2P_3$,
    where $P_1$ is the origin, $P_2=(1,0)$ lies on the $x$-axis, $P_3=(a,h)$ lies
    in the first quadrant, where $a>0$ is a fixed constant.
    Let $\mu_h$ be the second Neumann eigenvalue of $T_h$,
    then
    \begin{align}\label{eq.mu.dec}
    \frac{d\mu_h}{dh}
    =-2 h^{-1} \int_{T_h} (\partial_y\phi_h)^2 dxdy,
    \end{align}
    where $\phi_h$ is a second Neumann eigenfunction
    with $||\phi_h||_{L^2(T_h)}=1$.
    In particular, $\mu_h$
    is strictly decreasing with $h$.
\end{Lemma}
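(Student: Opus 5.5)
We prove the formula by transplanting $T_h$ onto a fixed reference triangle through a vertical dilation, which turns the $h$-dependence into an explicit one-parameter family of quadratic forms. Let $T=T_1$ be the triangle with vertices $(0,0)$, $(1,0)$, $(a,1)$. The linear map $\Phi_h(\xi,\eta)=(\xi,h\eta)$ sends $T$ onto $T_h$, since it fixes $P_1,P_2$ and sends $(a,1)$ to $(a,h)$. For $\varphi\in H^1(T_h)$ put $\psi=\varphi\circ\Phi_h$. Then
\[
\int_{T_h}|\nabla\varphi|^2\,dxdy=h\int_T\Big((\partial_\xi\psi)^2+h^{-2}(\partial_\eta\psi)^2\Big)d\xi d\eta ,
\qquad
\int_{T_h}\varphi^2\,dxdy=h\int_T\psi^2\,d\xi d\eta ,
\]
and the condition $\int_{T_h}\varphi=0$ is equivalent to $\int_T\psi=0$. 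Consequently $\mu_h$ is the second eigenvalue of the Rayleigh quotient
\[
R_h(\psi)=\frac{\int_T\big((\partial_\xi\psi)^2+h^{-2}(\partial_\eta\psi)^2\big)}{\int_T\psi^2},
\]
taken over mean-zero $\psi\in H^1(T)$. The admissible space no longer depends on $h$, and the numerator depends analytically on $h$.

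Next we establish the derivative formula. If $\mu_h$ is simple, analytic perturbation theory (Kato), or the Feynman--Hellmann argument, gives that $\mu_h$ is differentiable. Its derivative is obtained by differentiating the form at the eigenfunction $\psi_h=\phi_h\circ\Phi_h$, normalized so that $h\int_T\psi_h^2=1$:
\[
\frac{d\mu_h}{dh}=-2h^{-3}\frac{\int_T(\partial_\eta\psi_h)^2}{\int_T\psi_h^2}=-2h^{-3}\,h^{2}\int_{T_h}(\partial_y\phi_h)^2\,dxdy=-2h^{-1}\int_{T_h}(\partial_y\phi_h)^2 .
\]
Here we used $\partial_\eta\psi=h\,\partial_y\phi$ and $d\xi\,d\eta=h^{-1}dx\,dy$, and the result is exactly \eqref{eq.mu.dec}.

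The main obstacle is simplicity of $\mu_h$. For non-equilateral triangles, simplicity of the second Neumann eigenvalue is known (Siudeja; it is also covered by the work of Judge--Mondal \cite{Judge2020Ann}), and we would cite it. Without this, one still has one-sided derivatives given by the max/min of the same expression over the eigenspace, and the monotonicity conclusion survives. An alternative route is the Hadamard formula used in Lemma~\ref{lem.dec} together with the vector field $V=(0,y/h)$. That approach yields $\int_{T_h}\operatorname{div}[(|\nabla\phi|^2-\mu\phi^2)V]$, and a Rellich/Pohozaev-type identity would then be needed to convert the result into the stated form. The scaling argument above avoids this conversion.

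For strict decrease, we need $\int_{T_h}(\partial_y\phi_h)^2>0$. If this integral vanished, $\phi_h$ would depend on $x$ alone, so $\phi_h=c\cos(\sqrt{\mu}\,x+\theta)$. The Neumann condition on the slanted edges, whose normals have nonzero $x$-components, would then force $\phi_h'=0$ along ranges of $x$, hence $\phi_h$ would be constant. A constant cannot be a mean-zero eigenfunction with $\mu_h>0$, so the integral is positive. Therefore $d\mu_h/dh<0$, and $\mu_h$ is strictly decreasing in $h$.
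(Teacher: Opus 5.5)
Your proposal is correct and follows essentially the same route as the paper. Both pull back to $T_1$ via $(\xi,\eta)\mapsto(\xi,h\eta)$, differentiate the resulting $h$-dependent quadratic form at the eigenfunction using simplicity of $\mu_h$ (Feynman--Hellmann), and rule out $\partial_y\phi_h\equiv0$; your argument for that last step, via the Neumann condition on the slanted edges whose normals have nonzero $x$-component, spells out the paper's one-line remark more explicitly.
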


\begin{proof}
We define the map $\sigma:T_1\to T_h$ by
\begin{equation}
    \left\{
    \begin{aligned}
        &x=\xi,\\
        &y=h\eta.
    \end{aligned}
    \right.
\end{equation}
For any function $\varphi\in H^1(T_h)$, we define its normalized
pullback by
\[ \widetilde{\varphi}(\xi,\eta) =h^{1/2}\varphi\circ\sigma(\xi,\eta) =h^{1/2}\varphi(\xi,h\eta). \]
Then we have
\begin{align*}
    \int_{T_h}\varphi^2 dxdy=
    \int_{T_1}\widetilde{\varphi}^{2} d\xi d\eta, \quad
    \int_{T_h}|\nabla\varphi|^2 dxdy =
    \int_{T_1}
    \left(
        (\partial_\xi\widetilde{\varphi})^2
        +h^{-2}(\partial_\eta\widetilde{\varphi})^2
    \right)d\xi d\eta.
\end{align*}
Therefore, the Rayleigh quotient becomes
\begin{align*}
    R[\varphi]
    =
    \frac{\displaystyle
    \int_{T_h}|\nabla\varphi|^2 dxdy}
    {\displaystyle
    \int_{T_h}\varphi^2 dxdy}
    =
    \frac{\displaystyle
    \int_{T_1}
    \left(
        (\partial_\xi\widetilde{\varphi})^2
        +h^{-2}(\partial_\eta\widetilde{\varphi})^2
    \right)d\xi d\eta}
    {\displaystyle
    \int_{T_1}\widetilde{\varphi}^{2} d\xi d\eta}.
\end{align*}

Let $\phi_h$ be the second Neumann eigenfunction with  $\|\phi_h\|_{L^2(T_h)}=1$, then $\mu_h=R[\phi_h]$ implies that
\begin{align*}
    \mu_h
    =
    \int_{T_1}
    \left(
        (\partial_\xi\widetilde{\phi}_h)^2
        +h^{-2}(\partial_\eta\widetilde{\phi}_h)^2
    \right)d\xi d\eta.
\end{align*}
Differentiating with respect to $h$, we obtain
\begin{align*}
    \frac{d\mu_h}{dh}
    ={}&
    2\int_{T_1}
    \left(
        \partial_\xi\widetilde{\phi}_h\,
        \partial_h\partial_\xi\widetilde{\phi}_h
        +
        h^{-2}\partial_\eta\widetilde{\phi}_h\,
        \partial_h\partial_\eta\widetilde{\phi}_h
    \right)d\xi d\eta
    -2h^{-3}
    \int_{T_1}
    (\partial_\eta\widetilde{\phi}_h)^2\,d\xi d\eta.
\end{align*}
Since
$\int_{T_h}\nabla\phi_h\cdot\nabla\varphi dxdy
=\mu_h\int_{T_h}\phi_h\varphi dxdy$
holds for any test function $\varphi\in H^1(T_h)$, after the
change of variables, we have
\begin{equation}\label{eq.5.x}
    \int_{T_1}
    \left(
        \partial_\xi\widetilde{\phi}_h\,
        \partial_\xi\psi
        +
        h^{-2}\partial_\eta\widetilde{\phi}_h\,
        \partial_\eta\psi
    \right)d\xi d\eta
    =
    \mu_h
    \int_{T_1}\widetilde{\phi}_h\psi\,d\xi d\eta
\end{equation}
for every $\psi\in H^1(T_1)$.
Since $\mu_h$ is simple, standard perturbation theory for simple
eigenvalues allows
$\widetilde\phi_h$ to be chosen differentiably with respect to
$h$. Hence $\partial_h\widetilde\phi_h\in H^1(T_1)$.
Taking $ \psi=\partial_h\widetilde\phi_h\in H^1(T_1)$
in \eqref{eq.5.x}, we then obtain
\begin{align}\label{eq.5.y}
    \int_{T_1}
    \left(
        \partial_\xi\widetilde{\phi}_h\,
        \partial_\xi\partial_h\widetilde{\phi}_h
        +
        h^{-2}\partial_\eta\widetilde{\phi}_h\,
        \partial_\eta\partial_h\widetilde{\phi}_h
    \right)d\xi d\eta
    =
    \mu_h
    \int_{T_1}
    \widetilde{\phi}_h\,
    \partial_h\widetilde{\phi}_h\,d\xi d\eta.
\end{align}
Since
$\int_{T_1}\widetilde{\phi}_h^{2} d\xi d\eta=1$,
we have
\begin{align}\label{eq.5.z}
    \int_{T_1}
    \widetilde{\phi}_h\,
    (\partial_h\widetilde{\phi}_h) d\xi d\eta=0.
\end{align}
Combining the derivative formula above with
\eqref{eq.5.y} and \eqref{eq.5.z}, we obtain
\begin{align}\label{eq.mu.d}
    \frac{d\mu_h}{dh}
    =
    -2h^{-3}
    \int_{T_1}
    (\partial_\eta\widetilde{\phi}_h)^2\,d\xi d\eta=
    -2h^{-1}
    \int_{T_h}
    (\partial_y\phi_h)^2\,dxdy.
\end{align}
The derivative $\partial_y\phi_h$ cannot vanish identically; otherwise, $\phi_h$ would be independent of $y$, which is impossible for a nonconstant second Neumann eigenfunction on $T_h$. Hence we have $d\mu_h/dh<0$.
This proves the lemma.
\end{proof}

We also obtain the monotonicity of eigenvalues for the mixed boundary value problem.

\begin{Lemma}\label{lem.5.b}
   Let $T_h$ be the triangle $P_1P_2P_3$,
    where $P_1$ is the origin, $P_2=(1,0)$ lies on the $x$-axis, $P_3=(a,h)$ lies
    in the first quadrant, here $a>0$ is a fixed constant.
    Let $\lambda_h=\lambda^{P_1P_2}(T_h)$, then
    \begin{align}
        \frac{d\lambda_h}{dh} =
        -2 h^{-1} \int_{T_h} (\partial_y\nu_h)^2 dxdy,
    \end{align}
    where $\nu_h$ is the first eigenfunction
with $||\nu_h||_{L^2(T_h)}=1$. In particular, $\lambda_h$
    is strictly decreasing with $h$.
\end{Lemma}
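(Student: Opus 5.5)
The plan is to repeat the scaling argument of Lemma \ref{lem.5.a} with the Neumann problem replaced by the mixed problem \eqref{eq.5.1} with Dirichlet edge $X=P_1P_2$. The key point is that the map $\sigma(\xi,\eta)=(\xi,h\eta)$ sends $T_1$ onto $T_h$ and sends the edge $P_1P_2$ of $T_1$ onto the edge $P_1P_2$ of $T_h$, since this edge lies on $\{y=0\}$. Hence the admissible class
\[
\{\varphi\in H^1(T_h):\varphi=0 \text{ on } P_1P_2\}
\]
is carried bijectively onto the fixed class $\{\psi\in H^1(T_1):\psi=0\text{ on }P_1P_2\}$ by the normalized pullback $\widetilde\varphi=h^{1/2}\varphi\circ\sigma$. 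Exactly as before, the Rayleigh quotient becomes
\[
\frac{\int_{T_1}\bigl((\partial_\xi\widetilde\varphi)^2+h^{-2}(\partial_\eta\widetilde\varphi)^2\bigr)d\xi d\eta}{\int_{T_1}\widetilde\varphi^2d\xi d\eta}
\]
on an $h$-independent form domain. So $\lambda_h$ is the lowest eigenvalue of a holomorphic family of type (B) forms in $h$.

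First I note that $\lambda_h$ is simple. It is the principal eigenvalue of a mixed problem on a connected domain with a Dirichlet part of positive length, so the first eigenfunction can be taken positive and is unique up to scaling. This gives analytic dependence of $\lambda_h$ and of the normalized eigenfunction $\widetilde\nu_h$ on $h$, with $\partial_h\widetilde\nu_h$ lying in the fixed form domain; in particular $\partial_h\widetilde\nu_h$ still vanishes on $P_1P_2$. Unlike Lemma \ref{lem.5.a}, no non-equilateral hypothesis is needed here.

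Next I differentiate
\[
\lambda_h=\int_{T_1}\bigl((\partial_\xi\widetilde\nu_h)^2+h^{-2}(\partial_\eta\widetilde\nu_h)^2\bigr)d\xi d\eta
\]
in $h$. I use the weak eigen-equation, the analogue of \eqref{eq.5.x}, with test function $\psi=\partial_h\widetilde\nu_h$, which is admissible precisely because it vanishes on $P_1P_2$. I also use the normalization identity $\int_{T_1}\widetilde\nu_h\,\partial_h\widetilde\nu_h=0$, as in \eqref{eq.5.z}. The cross terms cancel and leave only $-2h^{-3}\int_{T_1}(\partial_\eta\widetilde\nu_h)^2$. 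Changing variables back gives $-2h^{-1}\int_{T_h}(\partial_y\nu_h)^2\,dxdy$.

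Strict monotonicity requires $\partial_y\nu_h\not\equiv0$. If $\partial_y\nu_h\equiv0$, then $\nu_h$ depends only on $x$. Because $\nu_h=0$ on $P_1P_2$, whose projection is all of $[0,1]$, and because every $x$-slice of $T_h$ meets $P_1P_2$, this forces $\nu_h\equiv0$, which is a contradiction.

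The only point needing care is the justification of differentiability of the eigenpair and the legitimacy of choosing $\partial_h\widetilde\nu_h$ as a test function. Both are handled by Kato's perturbation theory for a simple isolated eigenvalue of forms with fixed domain. As a cross-check, the Hellmann–Feynman formula $\frac{d\lambda_h}{dh}=\frac{d}{dh}a_h[\widetilde\nu_h]$ at fixed $\widetilde\nu_h$ gives the same result directly.
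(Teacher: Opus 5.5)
Your derivation of the formula is correct, and it is what the paper means when it says the formula follows ``using the method in Lemma~\ref{lem.5.a}''. Because $\sigma(\xi,\eta)=(\xi,h\eta)$ fixes the edge $P_1P_2$, the Dirichlet constraint becomes an $h$-independent form domain. Simplicity of the principal mixed eigenvalue then gives a differentiable normalized eigenfunction. Testing the weak equation with $\partial_h\widetilde\nu_h$ cancels the cross terms.

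You differ from the paper in how you get monotonicity. The paper does not deduce it from the formula; it uses a direct comparison. For $h<h'$ it transplants the eigenfunction $\nu$ of $T_h$ to $T_{h'}$ via $\nu'\circ\sigma_{h'}=\nu\circ\sigma_h$, which still vanishes on $P_1P_2$. Since $h'^{-2}<h^{-2}$, this gives $\lambda_{h'}\le R_{h'}[\nu']<R_h[\nu]=\lambda_h$. That argument needs no perturbation theory and gives strict decrease without knowing differentiability. Your route relies on Kato's theory, but yields the formula and the monotonicity together.

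One step in your proposal is wrong as written. To show $\partial_y\nu_h\not\equiv0$, you claim that every $x$-slice of $T_h$ meets $P_1P_2$. This fails when $a>1$, which the lemma allows since $a>0$ is arbitrary, and the paper later uses $\lambda_h$ with $a>1$ for concave kites. In that case the projection of $T_h$ is $(0,a)$. For $1<x<a$ the slice is $\bigl(h(x-1)/(a-1),\,hx/a\bigr)$, which stays away from $y=0$. So your argument only gives $\nu_h=0$ on $T_h\cap\{0<x<1\}$.

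The repair is short. If $\nu_h=f(x)$, which holds because $T_h$ is convex, then the interior equation gives $f''+\lambda_h f=0$ on $(0,\max\{1,a\})$. Since $f\equiv0$ on $(0,1)$, uniqueness for this ODE forces $f\equiv0$; alternatively, use unique continuation for $\Delta+\lambda_h$. The paper's comparison argument needs the same fact, $\partial_\eta\tilde\nu\not\equiv0$, and simply asserts it, so this point has to be settled in either approach.
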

\begin{proof}
    Using the method in Lemma \ref{lem.5.a}, one can derive the derivative formula of the eigenvalue with respect to $h$. Here, we also provide a simpler proof method for the monotonicity of the eigenvalue.

    Define the map $\sigma_h(\xi,\eta):T_1\to T_h$ by
\begin{equation}
    \left\{
    \begin{aligned}
        &x=\xi, \\
        &y=h\eta.
    \end{aligned}
    \right.
\end{equation}
For any function $\varphi\in H^1(T_h)$,
we define $\tilde{\varphi}=\varphi\circ\sigma_h
\in H^1(T_1)$. Hence the Rayleigh quotient
\begin{align*}
    R_h[\varphi]
    =\frac{\int_{T_1}\big( (\partial_\xi\tilde{\varphi})^2+h^{-2}(\partial_\eta\tilde{\varphi})^2\big) d\xi d\eta}{\int_{T_1}\tilde\varphi^2 d\xi d\eta}.
\end{align*}

Suppose that $h<h'$, let $\nu$  be the first mixed eigenfunction of $T_{h}$,
define $\nu'\in H^1(T_{h'})$ to be the function
such that $\nu'\circ\sigma_{h'}=\nu\circ\sigma_h$,
then $\nu'=0$ on $P_1P_2$, so $\nu'$ is an admissible test function. Hence
\begin{align*}
    \lambda_{h'}\le R_{h'}[\nu']& =\frac{\int_{T_1}\big( \partial_\xi\tilde{\nu}^2+h'^{-2}\partial_\eta\tilde{\nu}^2\big) d\xi d\eta}{\int_{T_1}\tilde{\nu}^2 d\xi d\eta} \\
    &< \frac{\int_{T_1}\big( \partial_\xi\tilde{\nu}^2+h^{-2}\partial_\eta\tilde{\nu}^2\big) d\xi d\eta}{\int_{T_1}\tilde\nu^2 d\xi d\eta}
    =\lambda_h,
\end{align*}
where the last inequality holds since $h<h'$ and
$\partial_\eta\tilde\nu$ is not identically zero.
This completes the proof of monotonicity.
\end{proof}

When $h$ is sufficiently large, we have the following result.
\begin{Lemma} \label{lem.large}
    Let $T_h$ be the triangle $P_1P_2P_3$,
    where $P_1$ is the origin, $P_2=(1,0)$ lies on the $x$-axis, $P_3=(a,h)$ lies
    in the first quadrant,  here $a>0$ is a fixed constant.
    Let $\mu_h$ and $\lambda_h$ be the eigenvalues
    given in Lemma \ref{lem.5.a} and Lemma \ref{lem.5.b}, respectively.
    Then $\lambda_h<\mu_h$ holds for all $h>h_0$,
    where $h_0=\sqrt{\max\{0, 2a-a^2,1-a^2 \}}$.
\end{Lemma}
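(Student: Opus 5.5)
The proof will be by a test-function argument for $\lambda_h$ built from a second Neumann eigenfunction, using the location of its nodal line. First, the geometric meaning of $h_0$. For $P_3=(a,h)$ we have $|P_1P_3|^2=a^2+h^2$ and $|P_2P_3|^2=(1-a)^2+h^2$, while $|P_1P_2|=1$. The condition $h^2>1-a^2$ is $|P_1P_3|>1$, and $h^2>2a-a^2$ is $|P_2P_3|>1$. So $h>h_0$ says exactly that the Dirichlet edge $P_1P_2$ is the strictly shortest edge of $T_h$, and in particular $T_h$ is not equilateral. The claim therefore becomes: for a triangle whose Dirichlet edge $S$ is strictly shortest, $\lambda^S(T)<\mu_2(T)$.

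Let $\phi$ be a second Neumann eigenfunction of $T=T_h$. By the results on triangles quoted in the introduction (\cite{Judge2020Ann}, Theorem 5.2 and Lemma 3.3), its nodal set is a single simple arc $\gamma$ that does not have both endpoints on the same edge. Hence $T\setminus\gamma$ has exactly two nodal domains. The key claim is the following: when $P_1P_2$ is strictly shortest, $\gamma$ separates the vertex $P_3$ from the closed edge $P_1P_2$, so that $\gamma$ joins the relative interiors of $P_1P_3$ and $P_2P_3$ and does not touch $P_1P_2$.

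Granting the claim, let $D$ be the nodal domain containing $P_3$ and set $\psi=\phi\,\mathbf 1_D$. Since $\phi=0$ on $\gamma$, we have $\psi\in H^1(T)$, and $\psi\equiv0$ near $P_1P_2$, so $\psi$ is admissible in the variational characterization of $\lambda_h=\lambda^{P_1P_2}(T_h)$. Integrating by parts on $D$, using the Neumann condition on $\partial D\cap\partial T$ and $\phi=0$ on $\gamma$, gives $\int_T|\nabla\psi|^2=\mu_h\int_T\psi^2$. Thus $\lambda_h\le\mu_h$. If equality held, $\psi$ would be a mixed eigenfunction vanishing on the open set $T\setminus\overline D$, which contradicts unique continuation. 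Hence $\lambda_h<\mu_h$ for all $h>h_0$.

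The main obstacle is the nodal-line claim. I would argue it through the sign of $\phi$ at the vertices. By the refined hot spots description on triangles \cite{Chen2026Invent}, the global extrema of $\phi$ are attained at the two endpoints of the longest edge, one of which is $P_3$ because $P_1P_2$ is not the longest edge. Say $\phi(P_3)=\max\phi>0$ and the minimum is at $P_1$, so that $P_1P_3$ is the longest edge. It then remains to show $\phi(P_2)<0$.

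I would try two routes for this. The first uses the directional monotonicity in \cite{Chen2026Invent}: $\phi$ is monotone along the edge $P_2P_3$ and along directions pointing from $P_1$ towards $P_3$. Since $P_2$ lies on the side of the perpendicular bisector of $P_1P_3$ nearer $P_1$ (because $|P_1P_2|<|P_2P_3|$), one would get $\phi(P_2)<0$. The second route, if the monotonicity statement is not sharp enough, is a reflection/polarization argument. Reflect across the bisector of the angle at $P_3$ the part of $T$ near the shorter of the two edges at $P_3$, and compare $\phi$ with its reflection using Lemma \ref{lem2.3} and Lemma \ref{lem2.1}; this is in the spirit of the Laugesen--Siudeja comparisons behind Lemma \ref{lem.tri}.

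Once $\phi(P_1),\phi(P_2)<0<\phi(P_3)$ is known, the endpoints of $\gamma$ are forced. They cannot lie on $P_1P_2$: otherwise $\phi$ would change sign on $P_1P_2$ an even number of times, so $\gamma$ would have both endpoints on that edge, which is excluded. They cannot be vertices either, since $\phi$ does not vanish at any vertex. So $\gamma$ joins $P_1P_3$ to $P_2P_3$, which proves the claim.
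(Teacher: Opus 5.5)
Your reduction matches the paper's: $h>h_0$ means exactly that $P_1P_2$ is the strictly shortest edge, so the lemma is the inequality $\lambda^S(T)<\mu_2(T)$. The paper gets this inequality by quoting the known eigenvalue comparison for triangles (Siudeja; Chen--Gui--Yao). Your nodal-domain step is also sound. If the nodal arc joins the open edges $P_1P_3$ and $P_2P_3$, then $\phi\mathbf 1_D\in H^1(T)$ vanishes near $P_1P_2$ and has Rayleigh quotient $\mu_h$, and unique continuation gives strictness.

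The gap is the step you flag yourself, $\phi(P_2)<0$; equivalently, $\phi$ has the same sign at both ends of the shortest edge. This is at least as strong as the lemma, and neither route proves it. It is also a quantitative fact. When $|P_1P_2|=|P_2P_3|<|P_1P_3|$, the triangle is isosceles with apex $P_2$ and base angles $<\pi/3$. By Lemma~\ref{lem.tri}, $\phi$ is then antisymmetric and $\phi(P_2)=0$. So the sign at $P_2$ is decided by how the two edges at $P_2$ compare, and any proof must use $|P_1P_2|<|P_2P_3|$ in an essential way.

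Route 1 does not use it. Monotonicity along $P_2P_3$ only gives $\phi(P_2)<\phi(P_3)$. Monotonicity in directions near $\overrightarrow{P_1P_3}$ says nothing at $P_2$, because the line through $P_2$ parallel to $P_1P_3$ meets $\overline{T}$ only at $P_2$. The appeal to the perpendicular bisector of $P_1P_3$ assumes that the sign of $\phi$ is governed by distance to the two extremal vertices, which is essentially the claim. That principle also fails pointwise. In the $30$--$60$--$90$ triangle (half of an equilateral one), the nodal line meets the hypotenuse about $0.57$ of its length from the maximum point. Points just past the midpoint are therefore nearer the minimum, yet $\phi>0$ there.

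Route 2 fails as stated. Take $w=\phi-\phi\circ R$ on the piece cut off by the bisector of the angle at $P_3$. You get $w=0$ on the bisector and $\partial_n w=0$ on $P_2P_3$. But on the part of $P_1P_2$ inside that piece, $\phi\circ R$ is $\phi$ reflected from an interior segment, so $\partial_n w$ has no sign and Lemmas~\ref{lem2.1} and \ref{lem2.3} cannot be applied. Even a comparison $\phi\le\phi\circ R$ would only relate $\phi(P_2)$ to $\phi(RP_2)$, where $RP_2\in P_1P_3$ has unknown sign.

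To close the argument you must either quote $\lambda^S<\mu_2$ directly, as the paper does, or quote a result fixing the signs of $\phi$ at the endpoints of the shortest edge. With the latter, your nodal-domain argument would be a self-contained derivation of the lemma.
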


\begin{proof}
The three side lengths of $T_h$ are
\[
|P_1P_2|=1,\qquad
|P_1P_3|=\sqrt{a^2+h^2},\qquad
|P_2P_3|=\sqrt{(a-1)^2+h^2}.
\]
If
\[
h>h_0=\sqrt{\max\{0,\,2a-a^2,\,1-a^2\}},
\]
then
\[
a^2+h^2>1
\qquad\text{and}\qquad
(a-1)^2+h^2>1.
\]
Hence $P_1P_2$ is the shortest side of $T_h$.

Recall that
\[
\lambda_h=\lambda^{P_1P_2}(T_h).
\]
Therefore $\lambda_h$ is precisely the first mixed eigenvalue
corresponding to the shortest edge of $T_h$. By the eigenvalue
comparison for triangles established in \cite[Theorem 1.1]{Siudeja2016PAMS} and \cite[Theorem 1.4]{Chen2026Invent}, the
first mixed eigenvalue associated with the shortest edge satisfies
\[
\lambda^S(T_h)<\mu(T_h),
\]
where $\mu(T_h)$ denotes the second Neumann eigenvalue. Consequently,
\[
\lambda_h<\mu_h.
\]
This completes the proof.
\end{proof}

Next, we deal with the case $1<a<2$. 
For the thin-domain limits below it is convenient to formulate the one-dimensional limits in their natural weighted energy spaces. If $I$ is an interval and $g>0$ in its interior, set
\[
H^1_g(I)=\left\{\psi\in H^1_{\rm loc}(I):
\int_I g(\xi)\bigl(|\psi(\xi)|^2+|\psi'(\xi)|^2\bigr)\,d\xi<\infty\right\}.
\]
\begin{Lemma}\label{lem.small2}
		Let $T_h$ be the triangle $P_1P_2P_3$,
		where $P_1$ is the origin, $P_2=(1,0)$ lies on the $x$-axis, $P_3=(a,h)$ lies
		in the first quadrant,  here $a>1$ is a fixed constant.
		Let $\lambda_h$ be the eigenvalue
		given in Lemma \ref{lem.5.b}.
		Then 
		\begin{align} \label{eq.small.lambda}
			\lim_{h\to 0} \lambda_{h}=\lambda_{0}:= \frac{j_{0,1}^2}{(a-1)^2},
		\end{align}
		where $j_{0,1}$ denotes the first positive root of the Bessel function of the first kind $J_0$.
	\end{Lemma}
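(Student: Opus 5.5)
The strategy is to bound $\lambda_h$ above by $\lambda_0$ with an explicit test function, and below by $\lambda_0$ through a thin-domain compactness argument.

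\textbf{Geometry of the thin triangle.} Since $a>1$, for $0<x<1$ the vertical section of $T_h$ runs from the Dirichlet edge $y=0$ up to the edge $y=hx/a$. For $1<x<a$ it runs from the Neumann edge $y=h(x-1)/(a-1)$ up to $y=hx/a$. The section length is therefore $h\,g(x)$, with
\[
g(x)=\frac{x}{a}\ \ (0<x<1),\qquad g(x)=\frac{a-x}{a(a-1)}\ \ (1<x<a).
\]
On $(0,1)$ the transverse Dirichlet condition costs energy of order $h^{-2}$, so the eigenfunction should vanish there in the limit. The expected limit problem is then the weighted one-dimensional problem $-(g\psi')'=\lambda g\psi$ on $(1,a)$, with $\psi(1)=0$ and no condition at the degenerate endpoint $x=a$. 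With $r=a-x$ this becomes $\psi''+r^{-1}\psi'+\lambda\psi=0$, i.e. Bessel's equation of order $0$. The solution regular at $r=0$ is $J_0(\sqrt\lambda\, r)$, and the condition at $r=a-1$ gives $\lambda_0=j_{0,1}^2/(a-1)^2$.

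\textbf{Upper bound.} Let $\varphi(x,y)=J_0\bigl(j_{0,1}(a-x)/(a-1)\bigr)$ for $x\ge1$ and $\varphi=0$ for $x<1$. This function is continuous and Lipschitz, vanishes on $P_1P_2$, and depends only on $x$, so it is admissible. Integrating over vertical sections turns its Rayleigh quotient exactly into the one-dimensional weighted quotient, since the section length is the linear weight $h\,g(x)$ and the factor $h$ cancels. That quotient equals $\lambda_0$. Hence $\lambda_h\le\lambda_0$ for every $h$, and in particular $\limsup_{h\to0}\lambda_h\le\lambda_0$.

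\textbf{Lower bound.} Let $\nu_h$ be the normalized first mixed eigenfunctions. Pull them back to $T_1$ as in Lemma \ref{lem.5.a} via $\widetilde\nu_h(\xi,\eta)=h^{1/2}\nu_h(\xi,h\eta)$, so that
\[
\int_{T_1}\widetilde\nu_h^2=1,\qquad \int_{T_1}\bigl((\partial_\xi\widetilde\nu_h)^2+h^{-2}(\partial_\eta\widetilde\nu_h)^2\bigr)=\lambda_h\le\lambda_0 .
\]
The family is then bounded in $H^1(T_1)$ and $\|\partial_\eta\widetilde\nu_h\|_{L^2}=O(h)$. By Rellich, along a subsequence $\widetilde\nu_h\to\psi$ weakly in $H^1(T_1)$ and strongly in $L^2(T_1)$, with $\partial_\eta\psi=0$. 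So $\psi=\psi(\xi)$ and $\int_{T_1}\psi^2=1$, which in one-dimensional form reads $\int g\psi^2=1$. The Dirichlet trace on $\{\eta=0,\ 0<\xi<1\}$ passes to the limit, so $\psi=0$ on $(0,1)$. Because $\psi\in H^1$ is independent of $\eta$, it has a continuous representative on $(0,a)$ away from $\xi=a$, which forces $\psi(1)=0$. Weak lower semicontinuity gives
\[
\int_1^a g\,|\psi'|^2\,d\xi\le\liminf_{h\to0}\lambda_h .
\]
Thus $\psi\in H^1_g(1,a)$ with $\psi(1)=0$ is admissible for the weighted problem, and its weighted Rayleigh quotient is at least $\lambda_0$, the first eigenvalue computed above. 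Hence $\liminf_{h\to0}\lambda_h\ge\lambda_0$. Since the subsequence was arbitrary, this yields \eqref{eq.small.lambda}.

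\textbf{Main obstacle.} The delicate step is identifying the limit variational problem near the degenerate endpoint $\xi=a$, where $g$ vanishes. One must check that the first eigenvalue of the weighted problem on $H^1_g(1,a)$ with $\psi(1)=0$ really is $j_{0,1}^2/(a-1)^2$, i.e. that no spurious singular solution (the $Y_0$ branch) enters. I would handle this by showing directly that $J_0$ minimizes the weighted quotient. Substitute $\psi=J_0(\sqrt{\lambda_0}\,r)\,w$ and integrate by parts (a Picone-type identity); the boundary terms at $r=0$ vanish because the weight $r$ kills them.
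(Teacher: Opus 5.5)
Your proposal is correct and follows the paper's route. An $\eta$-independent lift gives $\lambda_h\le\lambda_0$ for every $h$, and the rescaled thin-domain compactness argument (weak $H^1(T_1)$ limit independent of $\eta$, vanishing on $(0,1)$ by the trace, hence $\psi(1)=0$, then lower semicontinuity) gives the reverse inequality. The only difference is in identifying the weighted eigenvalue. You use the explicit profile $J_0\bigl(j_{0,1}(a-\xi)/(a-1)\bigr)$ as test function and a Picone identity to show the weighted infimum equals $j_{0,1}^2/(a-1)^2$; the paper instead takes a minimizer of the weighted problem for granted and identifies it through the Euler--Lagrange equation, excluding the $Y_0$ branch. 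Your version avoids that unproved existence step. You should, however, also check the Picone boundary term at $r=a-1$, where $J_0$ vanishes: it is harmless because $\psi(1)=0$ and $\psi\in H^1$ near $\xi=1$, so $\psi^2/(a-1-r)\to0$.
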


\begin{proof}
Put
\[
g_D(\xi)=\frac{a-\xi}{a(a-1)},\qquad 1<\xi<a,
\]
and set
\[
V_D=\{\psi\in H^1_{g_D}(1,a):\psi(1)=0\}.
\]
Let
\begin{equation}\label{eq.sturm}
\lambda_0=
\inf_{0\ne\psi\in V_D}
\frac{\displaystyle\int_1^a g_D(\xi)|\psi'(\xi)|^2\,d\xi}
{\displaystyle\int_1^a g_D(\xi)|\psi(\xi)|^2\,d\xi}.
\end{equation}
We first prove $\lambda_h\to\lambda_0$.

Use the vertical rescaling $x=\xi$, $y=h\eta$. It sends $T_h$ to the fixed triangle
\[
T_1=\{(\xi,\eta):0<\xi<a,\ V_{\rm bot}(\xi)<\eta<V_{\rm top}(\xi)\},
\]
where
\[
V_{\rm top}(\xi)=\frac{\xi}{a},\qquad
V_{\rm bot}(\xi)=
\begin{cases}
0,&0\leq\xi\leq1,\\[1mm]
\dfrac{\xi-1}{a-1},&1\leq\xi\leq a.
\end{cases}
\]
For $\widetilde\varphi(\xi,\eta)=\varphi(\xi,h\eta)$,  the Rayleigh quotient becomes
\[
R_h[\widetilde\varphi]=
\frac{\displaystyle\int_{T_1}\bigl(|\partial_\xi\widetilde\varphi|^2+h^{-2}|\partial_\eta\widetilde\varphi|^2\bigr)\,d\xi d\eta}
{\displaystyle\int_{T_1}|\widetilde\varphi|^2\,d\xi d\eta}.
\]
with Dirichlet condition on $\{(\xi,0):0<\xi<1\}$.

Let $\psi_0$ be a normalized minimizer in \eqref{eq.sturm}, extend it by zero to $(0,1)$, and set $\widetilde\varphi(\xi,\eta)=\psi_0(\xi)$. Since $\psi_0(1)=0$, this extension lies in $H^1(T_1)$ and satisfies the Dirichlet condition.
%Its transverse derivative is zero and the vertical thickness of $T_1$ over $(1,a)$ is exactly $g_D$. Hence
%\begin{equation}\label{eq:lamh-le-lam0}
%\limsup_{h\to0}\lambda_h\leq\lambda_0.
%\end{equation}
		Then
		\begin{equation} \label{eq.lam0}
			\begin{aligned}
				\lambda_h &\le \frac{\int_{T_1} (\partial_\xi\tilde{\varphi})^2 d\xi d\eta }{\int_{T_1}\tilde{\varphi}^2 d\xi d\eta }
				=\frac{\int_0^a \bigg( \int_{V_{bot}(\xi)}^{V_{top}(\xi)} d\eta \bigg)\big(\psi_0'(\xi)\big)^2 d\xi }
				{\int_0^a \bigg( \int_{V_{bot}(\xi)}^{V_{top}(\xi)} d\eta \bigg)\big(\psi_0(\xi)\big)^2 d\xi } \\
				&=\frac{\int_0^a g_D(\xi)\big(\psi'_0(\xi)\big)^2 d\xi }{\int_0^a g_D(\xi)\big(\psi_0(\xi)\big)^2 d\xi }
				=\frac{\int_1^a g_D(\xi)\big(\psi'_0(\xi)\big)^2 d\xi }{\int_1^a g_D(\xi)\big(\psi_0(\xi)\big)^2 d\xi }
				=\lambda_{0}.
			\end{aligned}
		\end{equation}
		Since this holds for all $h>0$, we obtain
        \begin{align}\label{eq:lamh-le-lam0}
            \limsup\limits_{h\to 0}\lambda_h\le \lambda_0.
        \end{align} 

For the reverse inequality, choose the positive mixed eigenfunction $\phi_h$ with $\|\phi_h\|_{L^2(T_h)}^2=h$ and set $\widetilde\phi_h(\xi,\eta)=\phi_h(\xi,h\eta)$. Then $\|\widetilde\phi_h\|_{L^2(T_1)}=1$, and the preceding upper bound gives
\[
\int_{T_1}\left(|\partial_\xi\widetilde\phi_h|^2+h^{-2}|\partial_\eta\widetilde\phi_h|^2\right)\,d\xi d\eta\leq C.
\]
After passing to a subsequence,
\[
\widetilde\phi_h\rightharpoonup\widetilde\phi_0\quad\text{in }H^1(T_1),
\qquad
\widetilde\phi_h\to\widetilde\phi_0\quad\text{in }L^2(T_1).
\]
Moreover, $\|\partial_\eta\widetilde\phi_h\|_{L^2(T_1)}\leq Ch$, so $\partial_\eta\widetilde\phi_0=0$. Thus $\widetilde\phi_0(\xi,\eta)=\psi(\xi)$. The trace condition on the horizontal Dirichlet segment passes to the limit and gives $\psi=0$ on $(0,1)$. Consequently the restriction of $\psi$ to $(1,a)$ belongs to $V_D$, with trace $\psi(1)=0$, and strong $L^2$ convergence gives
\[
\int_1^a g_D(\xi)|\psi(\xi)|^2\,d\xi=1.
\]
By weak lower semicontinuity, we have 
\begin{equation}\label{eq:lam0-le-lamh}
\lambda_0
\leq\int_1^a g_D|\psi'|^2
=\int_{T_1}|\partial_\xi\widetilde\phi_0|^2
\leq\liminf_{h\to0}\int_{T_1}|\partial_\xi\widetilde\phi_h|^2 \le \liminf_{h\to0}\lambda_h.
\end{equation}
Together with \eqref{eq:lamh-le-lam0} this proves $\lambda_h\to\lambda_0$.
%Notice that no unweighted $H^1(1,a)$ regularity or pointwise boundedness at the degenerate endpoint $a$ has been assumed.

It remains to identify $\lambda_0$. The Euler--Lagrange equation for \eqref{eq.sturm} is
\[
-(g_D\psi')'=\lambda_0g_D\psi\quad\hbox{in }(1,a),
\qquad \psi(1)=0,
\]
with finite weighted energy at $\xi=a$. After the change of variable $z=a-\xi$, this becomes
\[
-(zw')'=\lambda_0zw,
\]
whose solutions are linear combinations of $J_0(\sqrt{\lambda_0}z)$ and $Y_0(\sqrt{\lambda_0}z)$. The $Y_0$ branch is excluded by the energy condition, because $Y_0'(z)\sim c/z$ and hence $\int_0^\varepsilon z|Y_0'(z)|^2dz=\infty$. Thus
\[
\psi(\xi)=C J_0(\sqrt{\lambda_0}(a-\xi)).
\]
The condition $\psi(1)=0$ gives $\sqrt{\lambda_0}(a-1)=j_{0,1}$, and consequently
\[
\lambda_0=\frac{j_{0,1}^2}{(a-1)^2}.
\]
This proves the lemma.

\end{proof}

	\begin{Lemma}\label{lem.small3}
	Let $T_h$ be the triangle $P_1P_2P_3$,
	where $P_1$ is the origin, $P_2=(1,0)$ lies on the $x$-axis, $P_3=(a,h)$ lies
	in the first quadrant,  here $a>1$ is a fixed constant.
	Let $\mu_h$ be the second Neumann eigenvalue of $T_h$.
	Then 
	\begin{align} \label{eq.small.mu}
		\lim_{h\to 0} \mu_{h}= \mu_0,
	\end{align}
	where $\mu_0$ is the first positive root of the following equation
	\begin{align}\label{eq.slove}
		J_0(\sqrt{\mu})J_1(\sqrt{\mu}(a-1))+J_1(\sqrt{\mu})J_0(\sqrt{\mu}(a-1))=0,
	\end{align}
	where $J_0$ and $J_1$ are  the
	Bessel functions of the first kind of order $0$ and $1$, respectively.
	\end{Lemma}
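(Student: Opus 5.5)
The plan is to repeat the thin-domain argument of Lemma~\ref{lem.small2}, now for the pure Neumann problem with a zero-mean constraint. We use the vertical rescaling $x=\xi$, $y=h\eta$ onto the fixed triangle $T_1$ with the functions $V_{\rm top}$, $V_{\rm bot}$ from the proof of Lemma~\ref{lem.small2}. The vertical thickness of $T_1$ over $\xi$ is
\[
g(\xi)=\begin{cases}\xi/a, & 0<\xi\le 1,\\[1mm] \dfrac{a-\xi}{a(a-1)}, & 1\le \xi<a.\end{cases}
\]
It is continuous at $\xi=1$ (both branches equal $1/a$), positive inside $(0,a)$, and degenerates linearly at both endpoints. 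The candidate limit is
\[
\mu_0^*=\inf\Big\{\tfrac{\int_0^a g|\psi'|^2}{\int_0^a g|\psi|^2}:\ 0\ne\psi\in H^1_g(0,a),\ \int_0^a g\psi=0\Big\}.
\]
I will first show $\mu_h\to\mu_0^*$, and then identify $\mu_0^*$ with the first positive root of \eqref{eq.slove}.

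\textbf{Upper bound.} For any admissible $\psi$, the function $\widetilde\varphi(\xi,\eta)=\psi(\xi)$ lies in $H^1(T_1)$. Its mean over $T_1$ is $\int_0^a g\psi=0$, so it is admissible for the rescaled Neumann Rayleigh quotient. Since $\partial_\eta\widetilde\varphi=0$, the quotient equals the 1D quotient exactly. This gives $\mu_h\le\mu_0^*$ for every $h$, hence $\limsup_{h\to0}\mu_h\le\mu_0^*$. A density argument (smooth $\psi$ up to the degenerate endpoints, after subtracting the weighted mean) handles the fact that the infimum is over a weighted space.

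\textbf{Lower bound.} Take the rescaled second eigenfunctions $\widetilde\phi_h$ with $\|\widetilde\phi_h\|_{L^2(T_1)}=1$ and $\int_{T_1}\widetilde\phi_h=0$. The upper bound gives
\[
\int_{T_1}\big(|\partial_\xi\widetilde\phi_h|^2+h^{-2}|\partial_\eta\widetilde\phi_h|^2\big)\le C .
\]
Pass to a subsequence with $\widetilde\phi_h\rightharpoonup\widetilde\phi_0$ weakly in $H^1(T_1)$ and strongly in $L^2(T_1)$. Then $\partial_\eta\widetilde\phi_0=0$, so $\widetilde\phi_0=\psi(\xi)$. Strong $L^2$ convergence preserves both the normalization $\int g\psi^2=1$ (so the limit is nonzero) and the constraint $\int g\psi=0$. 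Weak lower semicontinuity of $\int|\partial_\xi\cdot|^2$ then gives $\mu_0^*\le\liminf_{h\to0}\mu_h$. Since every subsequence has a further subsequence with this limit, the whole family converges.

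\textbf{Identification of $\mu_0^*$.} The minimizer satisfies $-(g\psi')'=\mu g\psi$ on $(0,1)$ and on $(1,a)$. Because it is an $H^1_g$ critical point, the transmission conditions at $\xi=1$ are continuity of $\psi$ and of $g\psi'$; since $g$ is continuous there, this means continuity of $\psi'$. At the degenerate endpoints only finite-energy solutions are allowed. Exactly as in Lemma~\ref{lem.small2}, the $Y_0$ branches are excluded. With $k=\sqrt\mu$ this leaves
\[
\psi=AJ_0(k\xi)\ \text{on }(0,1),\qquad \psi=BJ_0(k(a-\xi))\ \text{on }(1,a).
\]
Using $J_0'=-J_1$, the matching conditions are
\[
AJ_0(k)=BJ_0(k(a-1)),\qquad -AJ_1(k)=BJ_1(k(a-1)).
\]
A nontrivial $(A,B)$ exists if and only if the determinant vanishes, which is exactly \eqref{eq.slove}. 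Every nonconstant eigenfunction of this weighted Sturm--Liouville problem is automatically $g$-orthogonal to constants. The zero-mean infimum is therefore the smallest positive eigenvalue, namely the first positive root $\mu_0$.

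\textbf{Main obstacle.} I expect the delicate step to be justifying this 1D spectral picture on a weighted space that degenerates at both ends and has a corner in the weight at $\xi=1$. Concretely, one needs the embedding $H^1_g\hookrightarrow L^2_g$ to be compact, so that the spectrum is discrete and the infimum is attained. One also needs the weak formulation to yield the stated transmission and endpoint conditions. I would obtain compactness directly from the 2D picture: the map $\psi\mapsto\psi(\xi)$ is an isometry into $H^1(T_1)$ with its compact embedding into $L^2(T_1)$. The endpoint conditions then follow from the finite-energy argument already used in Lemma~\ref{lem.small2}.
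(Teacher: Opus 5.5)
Your proposal is correct and follows essentially the same route as the paper: vertical rescaling onto $T_1$ with the thickness weight $g$, an upper bound from lifting zero-mean functions of $\xi$ alone, a lower bound from compactness and weak lower semicontinuity, and the Bessel identification via continuity of $\psi$ and of the flux $g\psi'$ at $\xi=1$. Your observation that $\psi\mapsto\psi(\xi)$ isometrically embeds $H^1_g(0,a)$ into $H^1(T_1)$ is a clean addition: it gives compactness of $H^1_g\hookrightarrow L^2_g$, and hence attainment of the infimum and discreteness of the one-dimensional spectrum, a point the paper leaves implicit.
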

	\begin{proof}

The argument is parallel to that of Lemma \ref{lem.small2}, but the mean-zero condition must now be retained. Let
\[
g_N(\xi)=
\begin{cases}
\dfrac{\xi}{a},&0<\xi<1,\\[1mm]
\dfrac{a-\xi}{a(a-1)},&1<\xi<a.
\end{cases}
\]
The natural one-dimensional Neumann limit is the closed weighted form on $H^1_{g_N}(0,a)$. Its first positive eigenvalue is
\begin{equation}\label{eq.sturm2}
\mu_0=
\inf_{\substack{0\ne\psi\in H^1_{g_N}(0,a)\\
\int_0^a g_N\psi=0}}
\frac{\displaystyle\int_0^a g_N(\xi)|\psi'(\xi)|^2\,d\xi}
{\displaystyle\int_0^a g_N(\xi)|\psi(\xi)|^2\,d\xi}.
\end{equation}
We prove that $\mu_h\to\mu_0$ by the same compactness argument.

Under $x=\xi$, $y=h\eta$, the fixed triangle $T_1$ is the same as in the proof of Lemma \ref{lem.small2}, and its vertical thickness equals $g_N(\xi)$. Let $\psi_0$ be a normalized minimizer in \eqref{eq.sturm2}. The lift $\widetilde\varphi(\xi,\eta)=\psi_0(\xi)$ satisfies
\[
\int_{T_1}\widetilde\varphi\,d\xi d\eta
=\int_0^a g_N(\xi)\psi_0(\xi)\,d\xi=0,
\]
so it is admissible for the second Neumann Rayleigh quotient. Hence
\begin{equation}\label{eq:muh-le-mu0}
\limsup_{h\to0}\mu_h\leq\mu_0.
\end{equation}

For the reverse inequality, let $\phi_h$ be a second Neumann eigenfunction normalized by
\[
\|\phi_h\|_{L^2(T_h)}^2=h,
\qquad
\int_{T_h}\phi_h\,dxdy=0,
\]
and put $\widetilde\phi_h(\xi,\eta)=\phi_h(\xi,h\eta)$. Then
\[
\|\widetilde\phi_h\|_{L^2(T_1)}=1,
\qquad
\int_{T_1}\widetilde\phi_h\,d\xi d\eta=0,
\]
and the limsup bound gives a uniform estimate
\[
\int_{T_1}\left(|\partial_\xi\widetilde\phi_h|^2+h^{-2}|\partial_\eta\widetilde\phi_h|^2\right)\,d\xi d\eta\leq C.
\]
Thus, after extraction of a subsequence,
\[
\widetilde\phi_h\rightharpoonup\widetilde\phi_0\quad\text{in }H^1(T_1),
\qquad
\widetilde\phi_h\to\widetilde\phi_0\quad\text{in }L^2(T_1),
\]
and $\|\partial_\eta\widetilde\phi_h\|_{L^2}\leq Ch$ forces $\widetilde\phi_0(\xi,\eta)=\psi(\xi)$. Passing to the limit in the normalization and orthogonality conditions gives
\[
\int_0^a g_N|\psi|^2=1,
\qquad
\int_0^a g_N\psi=0.
\]
Thus $\psi\in H^1_{g_N}(0,a)$ is admissible in \eqref{eq.sturm2}, and weak lower semicontinuity gives
\begin{equation}\label{eq:mu0-le-muh}
\mu_0
\leq\int_0^a g_N|\psi'|^2
=\int_{T_1}|\partial_\xi\widetilde\phi_0|^2
\leq\liminf_{h\to0}\mu_h.
\end{equation}
Combining \eqref{eq:muh-le-mu0} and \eqref{eq:mu0-le-muh} gives $\mu_h\to\mu_0$.

We now identify the limit eigenvalue. On each side of $\xi=1$, the Euler--Lagrange equation is
\[
-(g_N\psi')'=\mu_0g_N\psi.
\]
Finite weighted energy excludes the singular $Y_0$ solution at both degenerate endpoints, because $\int z|Y_0'(z)|^2dz=\infty$. Hence
\[
\psi(\xi)=A J_0(\sqrt{\mu_0}\,\xi),\qquad 0<\xi<1,
\]
and
\[
\psi(\xi)=B J_0(\sqrt{\mu_0}(a-\xi)),\qquad 1<\xi<a.
\]
		Since $g_N$ is continuous and strictly positive at $\xi=1$, the weak equation implies continuity of both $\psi$ and the flux $g_N\psi'$ there. As $g_N(1^-)=g_N(1^+)=1/a$, the matching conditions reduce to
		\begin{equation}\left\{
			\begin{aligned}
				&A J_0(\sqrt{\mu}) = B J_0(\sqrt{\mu}(a-1)), \\
				&-A J_1(\sqrt{\mu})  = B J_1(\sqrt{\mu}(a-1)).
			\end{aligned}\right.
		\end{equation}
		where $J_1(x)=-J_0'(x)$ is the
		Bessel function of the first kind of order $1$.
		A nontrivial pair $(A,B)$ exists if and only if
		\begin{align}
			J_0(\sqrt{\mu })J_1(\sqrt{\mu }(a-1))+J_1(\sqrt{\mu })J_0(\sqrt{\mu }(a-1))=0.
		\end{align}
		Hence the second eigenvalue of problem \eqref{eq.sturm2} is the first strictly positive root of this equation.
\iffalse
Since $g_N(1)=1/a>0$, the weak equation implies continuity of $\psi$ and of the flux $g_N\psi'$ at $\xi=1$; equivalently,
\[
A J_0(\sqrt\mu)=B J_0(\sqrt\mu(a-1)),
\qquad
-A J_1(\sqrt\mu)=B J_1(\sqrt\mu(a-1)).
\]
A nonzero pair $(A,B)$ exists precisely when
\[
J_0(\sqrt\mu)J_1(\sqrt\mu(a-1))
+J_1(\sqrt\mu)J_0(\sqrt\mu(a-1))=0.
\]
The zero eigenvalue corresponds to constants, so $\mu_0$ is the first strictly positive root of this equation.
\fi 

\end{proof}

\begin{Lemma} \label{lem.god}
		Let $T_h$ be the triangle $P_1P_2P_3$,
		where $P_1$ is the origin, $P_2=(1,0)$ lies on the $x$-axis, $P_3=(a,h)$ lies
		in the first quadrant, here $a>1$ is a fixed constant.
		Let $\lambda_0$ and $\mu_0$ be the limit values
		given in Lemma \ref{lem.small2} and Lemma \ref{lem.small3}, respectively.
		Then $\mu_0<\lambda_0$ if and only if $a<2$.
\end{Lemma}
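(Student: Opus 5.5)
The plan is to work directly with the one-dimensional limit problems from the proofs of Lemmas \ref{lem.small2} and \ref{lem.small3}, and to compare $\mu_0$ with $\lambda_0$ by interlacing. I do not intend to locate roots of the Bessel determinant \eqref{eq.slove} directly.

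Set $L_1=1$ and $L_2=a-1$. The weight $g_N$ is a cone weight $\xi/a$ on $(0,L_1)$ and a reflected cone weight on $(1,a)$ of length $L_2$, and the two pieces agree at $\xi=1$. Now impose the extra constraint $\psi(1)=0$ on the form \eqref{eq.sturm2} without the mean-zero condition. The problem then decouples into two Dirichlet-at-the-junction problems. Their spectra are $\{j_{0,m}^2/L_1^2\}$ from the left piece (the same Bessel computation as in Lemma \ref{lem.small2}, with $z=\xi$) and $\{j_{0,m}^2/L_2^2\}$ from the right piece. In particular $\lambda_0=j_{0,1}^2/L_2^2$.

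The constraint $\psi(1)=0$ has codimension one in $H^1_{g_N}(0,a)$. So the min--max principle gives interlacing between the unconstrained eigenvalues $0=\nu_1<\nu_2=\mu_0\le\nu_3\le\cdots$ and the constrained ones $\delta_1\le\delta_2\le\cdots$:
\[
\delta_1\le \mu_0\le \delta_2,\qquad \delta_1=\frac{j_{0,1}^2}{\max\{L_1,L_2\}^2}.
\]
Here $\delta_2$ is the second smallest element of the union of the two Dirichlet spectra. The comparison with $\lambda_0$ then splits into three cases.

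\emph{Case $a>2$.} Here $L_2>L_1$, so $\lambda_0=\delta_1\le\mu_0$. To exclude equality, suppose $\mu_0=s_0^2$ with $s_0=j_{0,1}/(a-1)$. Then $J_0(s_0(a-1))=0$, so \eqref{eq.slove} reduces to $J_0(s_0)J_1(j_{0,1})=0$. But $J_1(j_{0,1})\neq0$, and $0<s_0<j_{0,1}$ gives $J_0(s_0)>0$, a contradiction. Hence $\mu_0>\lambda_0$.

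\emph{Case $a=2$.} The problem is symmetric about $\xi=1$, so \eqref{eq.slove} reads $2J_0(s)J_1(s)=0$. Its first positive root is $s=j_{0,1}$, since $J_1>0$ on $(0,j_{0,1}]$. Hence $\mu_0=\lambda_0$.

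\emph{Case $1<a<2$.} Now $L_2<L_1$. The interlacing only gives $\mu_0\le\delta_2\le\lambda_0$, since $\lambda_0$ is one of the constrained eigenvalues other than $\delta_1=j_{0,1}^2$. So the main work is ruling out $\mu_0=\lambda_0$. For this I will use that the weighted Sturm--Liouville problem has simple eigenvalues, and that its second eigenfunction has exactly one sign change. This follows from Sturm oscillation, or equivalently from Courant's theorem in one dimension together with $\mu_0$ being the first nonzero eigenvalue.

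Suppose $\mu_0=\lambda_0=s_0^2$. Then the eigenfunction is $AJ_0(s_0\xi)$ on $(0,1)$ and $BJ_0(s_0(a-\xi))$ on $(1,a)$. It vanishes at $\xi=1$ because $s_0(a-1)=j_{0,1}$. The flux condition gives $-AJ_1(s_0)=BJ_1(j_{0,1})$, and continuity gives $AJ_0(s_0)=0$.
\begin{itemize}
\item If $A=0$, the flux condition forces $B=0$, which is impossible.
\item If $A\neq0$, then $J_0(s_0)=0$ with $s_0>j_{0,1}$, so $s_0\ge j_{0,2}$. Then $J_0(s_0\xi)$ changes sign at $\xi=j_{0,1}/s_0\in(0,1)$. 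Together with the sign change at $\xi=1$ (the flux there is nonzero), this gives at least two sign changes, which contradicts the nodal count.
\end{itemize}
Hence $\mu_0<\lambda_0$ for $1<a<2$, which completes the equivalence.

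The main obstacle is justifying the interlacing and the nodal count rigorously in the degenerate weighted space $H^1_{g_N}$. The weight vanishes at both endpoints, so the form must be shown to be closed with compact resolvent there. My plan is to inherit this from the compactness already used in Lemmas \ref{lem.small2} and \ref{lem.small3}. At the degenerate endpoints, the $J_0$ representation (with the $Y_0$ branch excluded by finite energy) makes the Sturm comparison argument explicit.
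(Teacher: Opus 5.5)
Your proof is correct, but it takes a genuinely different route from the paper's.

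\textbf{The paper's route.} The paper works only with the secular equation. With $x=\sqrt{\mu}$ and $c=a-1$, it rewrites \eqref{eq.slove} as $f(x)+f(cx)=0$, where $f=J_0/J_1$ on $(0,j_{1,1})$. It then locates the first positive root relative to $j_{0,1}/c$ using sign information and the intermediate value theorem: for $c>1$ there is no root in $(0,j_{0,1}/c]$, and for $c<1$ there is a root below $j_{0,1}/c$.

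\textbf{Your route and what each buys.} You bracket $\mu_0$ variationally by imposing a Dirichlet condition at the junction $\xi=1$. This splits the limit problem into two cone problems of lengths $1$ and $a-1$. The threshold $a=2$ then becomes transparent: it is simply the question of which cone is longer. The Bessel matching conditions are needed only to exclude equality. The cost is that you need variational facts on the degenerate space $H^1_{g_N}(0,a)$. The paper needs only the characterization of $\mu_0$ as the first positive root of \eqref{eq.slove} and elementary facts about $J_0,J_1$ on $(0,j_{1,1})$.

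\textbf{The obstacle you flagged is smaller than you think.}
\begin{itemize}
\item \emph{Compactness.} The map $\psi\mapsto\psi(\xi)$ embeds $H^1_{g_N}(0,a)$ isometrically into $H^1(T_1)$ as $\eta$-independent functions, so Rellich on $T_1$ suffices.
\item \emph{Case $a>2$.} Full interlacing is unnecessary. If $\phi$ is a minimizer for $\mu_0$, then $\phi-\phi(1)$ vanishes at $\xi=1$. Because $\phi$ has weighted mean zero, subtracting a constant leaves the energy unchanged and increases the weighted $L^2$ norm. So its quotient is at most $\mu_0$, and the decoupled infimum $\min\{j_{0,1}^2,\lambda_0\}=\lambda_0$ gives $\lambda_0\le\mu_0$.
\item \emph{Case $1<a<2$.} You can drop the Sturm/Courant nodal count. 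Let $\psi_L,\psi_R$ be the positive ground states of the two cone problems; they have disjoint supports and quotients $j_{0,1}^2$ and $\lambda_0$. Any weighted-mean-zero combination $\alpha\psi_L+\beta\psi_R$ must have $\alpha\beta<0$. Its Rayleigh quotient is therefore a strict convex combination of $j_{0,1}^2$ and $\lambda_0$, hence strictly below $\lambda_0$ when $a<2$. This gives $\mu_0<\lambda_0$ directly, without oscillation theory in the weighted setting.
\end{itemize}
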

\begin{proof} 
		We first rewrite the equation in a more convenient form. Let 
		$x = \sqrt{\mu}, c = a - 1 > 0.$
		Then the equation \eqref{eq.slove} becomes
		\begin{align} \label{eq.500}
			J_0(x)J_1(cx) + J_1(x)J_0(cx) = 0,
		\end{align}
		and we are looking for its first positive root $x_0 = \sqrt{\mu_0}$.
		The quantity $\lambda_0$ is given by
		$\lambda_0 = \frac{j_{0,1}^2}{(a-1)^2} = \frac{j_{0,1}^2}{c^2}$, Hence
		\begin{align}
			\mu_0 < \lambda_0 \text{ is equivalent to } c x_0 < j_{0,1}.
		\end{align}

        Dividing  the equation \eqref{eq.500} by $J_1(x)J_1(cx)$ yields
		\begin{align} \label{eq.600}
			\frac{J_0(x)}{J_1(x)} + \frac{J_0(cx)}{J_1(cx)} = 0.
		\end{align}
		We define the function $f(t) = \frac{J_0(t)}{J_1(t)}$ for $t \in (0, j_{1,1})$,
        where $j_{1,1}$ is the first positive zero of $J_1$.
        Direct differentiation shows that
\[
f'(t)<-\frac{J_0^2(t)+J_1^2(t)}{2J_1^2(t)}<0,
\]
         so that $f(t)$ is strictly decreasing from $+\infty$ (as $t\to 0^+$)
        to $-\infty$ (as $t\to j_{1,1}^-$). Therefore, 
		it vanishes exactly at the first zero of $J_0$, i.e. $f(j_{0,1}) = 0$,
		where $0 < j_{0,1} < j_{1,1}$.
		Then \eqref{eq.600} becomes 
		\begin{align} \label{eq.700}
			f(x) + f(cx) = 0.
		\end{align}
		
		Let $g(x) = f(x) + f(cx)$. 
		We now discuss three cases.
		
		\noindent Case 1: $c > 1$, i.e. $a > 2$.
		
		For any $x \in \big(0, \frac{j_{0,1}}{c}\big]$,
		we have $x < j_{0,1}$ and $cx \le j_{0,1}$, hence $f(x) > 0$ and $f(cx) \ge 0$,
		which gives $g(x) > 0$.  Therefore there is no root in $\big(0, \frac{j_{0,1}}{c}\big]$,
	    which means that the first positive root must be bigger than $\frac{j_{0,1}}{c}$. 
		Consequently $x_0 > \dfrac{j_{0,1}}{c}$, i.e. $cx_0>j_{0,1}$.
		
		\noindent Case 2: $c = 1$, i.e. $a = 2$. 
		
		The equation \eqref{eq.700} becomes $2f(x) = 0$,  so $x_0 = j_{0,1}$,
		i.e.  $c x_0= j_{0,1}$.

		\noindent Case 3: $c < 1$, i.e. $a < 2$.
		
		If $\frac{j_{0,1}}{c} \ge  j_{1,1}$, then $\lim\limits_{x\to j_{1,1}^-} g(x) =-\infty$.
		It follows from the intermediate value theorem that $x_0<j_{1,1}\le \frac{j_{0,1}}{c}$, i.e. $cx_0<j_{0,1}$.
		
		If $j_{0,1}<\frac{j_{0,1}}{c} <  j_{1,1}$, then $g(\frac{j_{0,1}}{c})=f(\frac{j_{0,1}}{c})+0 <0$.
		Again, the intermediate value theorem gives $x_0<\frac{j_{0,1}}{c}$, i.e. $cx_0<j_{0,1}$.
		
		This completes the proof.
\end{proof}

When $h$ is sufficiently small, the following lemma shows that $\mu_h<\lambda_h$ holds for all $a<2$.

\begin{Lemma}\label{lem.comp}
    	Let $T_h$ be the triangle $P_1P_2P_3$,
		where $P_1$ is the origin, $P_2=(1,0)$ lies on the $x$-axis,
        $P_3=(a,h)$ lies in the first quadrant,
        here $0<a<2$ is a fixed constant.
        Let $\mu_h$ and $\lambda_h$ be the eigenvalues
        given in Lemma \ref{lem.5.a} and Lemma \ref{lem.5.b}, respectively.
        Then there exists a constant $h_0>0$ such that
        $\mu_h<\lambda_h$ holds for all $h<h_0$.
\end{Lemma}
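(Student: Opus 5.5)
The proof splits into two regimes according to whether the Dirichlet edge $P_1P_2$ lies under the whole vertical projection of $T_h$. If $0<a\le 1$, every vertical fiber of $T_h$ ends on $P_1P_2$. If $1<a<2$, the fibers over $(1,a)$ miss $P_1P_2$.

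\textbf{Case $0<a\le1$.} I expect $\lambda_h\to\infty$ while $\mu_h$ stays bounded.

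For the lower bound on $\lambda_h$, every $x\in(0,1)$ has vertical fiber $\{x\}\times(0,\ell(x))$ with $\ell(x)\le h$, and the bottom endpoint lies on the Dirichlet edge. The one-dimensional Poincaré inequality for functions vanishing at one endpoint gives
\[
\int_0^{\ell(x)}|\partial_y\varphi|^2\,dy\ \ge\ \frac{\pi^2}{4h^2}\int_0^{\ell(x)}\varphi^2\,dy .
\]
Integrating in $x$ yields $\lambda_h\ge \pi^2/(4h^2)$.

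For the upper bound on $\mu_h$, I use the lift argument from Lemma~\ref{lem.small3}. Take $\psi(\xi)=\cos(\pi\xi)-c$ on $(0,1)$, where $c$ is chosen so that $\int_0^1 g\psi=0$, with $g$ the normalized vertical thickness of $T_1$. Its lift $\widetilde\varphi(\xi,\eta)=\psi(\xi)$ is independent of $\eta$, so its Rayleigh quotient is independent of $h$. Hence $\mu_h\le C(a)$ for all $h$. Combining the two bounds gives $\mu_h<\lambda_h$ for $h<h_0$ with $h_0\sim \pi/(2\sqrt{C(a)})$.

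\textbf{Case $1<a<2$.} Lemma~\ref{lem.small2} gives $\lambda_h\to\lambda_0=j_{0,1}^2/(a-1)^2$. Lemma~\ref{lem.small3} gives $\mu_h\to\mu_0$. Lemma~\ref{lem.god} gives $\mu_0<\lambda_0$ exactly when $a<2$. So for all sufficiently small $h$, $\mu_h<\lambda_h$ holds, with the margin $\tfrac12(\lambda_0-\mu_0)$.

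\textbf{Where the subtlety lies.} The only delicate point is the first case, and specifically the case $a<1$. There the fibers over $(a,1)$ are bounded by the bottom edge and the slanted edge $P_2P_3$, so one must check that each such fiber still ends on the Dirichlet segment. It does, since the fiber's bottom endpoint is $(x,0)$ with $0<x<1$, and the fiber length is at most $h$. I would state this explicitly.

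For the upper bound on $\mu_h$, I would not rely on Lemma~\ref{lem.small3}, whose statement assumes $a>1$. Instead I would use the explicit test function above, which works uniformly for $0<a<2$.
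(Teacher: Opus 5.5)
Your proof is correct. For $1<a<2$ it is exactly the paper's argument (Lemmas~\ref{lem.small2}, \ref{lem.small3}, \ref{lem.god}). For $0<a\le 1$ you take a genuinely different, self-contained route.

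The paper handles $0<a\le1$ entirely by citation. For $0<a<1$ it quotes Siudeja's Lemma~1, which gives $\mu_h<\lambda_h$ for $0<h<\sqrt{(1-a+a^2)/3}$. For $a=1$ it reflects the right triangle across $P_1P_2$ to get an isosceles triangle with base angle $\arctan(1/h)$, which exceeds $\pi/3$ when $h<1/\sqrt3$, and then applies Lemma~\ref{lem.tri}.

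You instead compare growth rates. The fiberwise Poincar\'e inequality gives $\lambda_h\ge \pi^2/(4h^2)$; this is the estimate of Lemma~\ref{lem.lambda.vertical.lower}, but applied to every admissible function. It works because every vertical fiber starts on the Dirichlet edge when $a\le 1$. An $x$-only mean-zero test function then bounds $\mu_h$ independently of $h$. This treats $a<1$ and $a=1$ uniformly and needs no external eigenvalue comparison.

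What you give up is sharpness. With the simplest choice $\varphi=x-(1+a)/3$ one gets $\mu_h\le 18/(1-a+a^2)$. Hence $h_0=\pi\sqrt{(1-a+a^2)/72}\approx 0.37\sqrt{1-a+a^2}$, which is smaller than Siudeja's $\sqrt{(1-a+a^2)/3}\approx 0.58\sqrt{1-a+a^2}$. So your route would not reproduce the lower bound for $\widetilde h(a)$ stated in the Remark after the proof of Theorem~\ref{thm1.2}. For the lemma itself, which only asks for some $h_0>0$, this does not matter.
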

\begin{proof}

If $0<a<1$, by the result of Siudeja
    \cite[Lemma 1]{Siudeja2015MZ}, one has 
    \[
        \mu_h<\lambda_h
        \qquad\text{for}\qquad
        0<h<\sqrt{\frac{1-a+a^2}{3}}.
    \]
    Hence the desired inequality holds for all sufficiently small $h$. If $a=1$, Lemma \ref{lem.tri} gives the conclusion.
    If $1<a<2$, Lemma \ref{lem.small2}, \ref{lem.small3} and \ref{lem.god} give the conclusion.
\end{proof}

	We next prove transversality at the intersection of $\lambda_h$ and $\mu_h$,
	specifically, $\lambda_h$ crosses $\mu_h$ from above to below at the intersection point.
	We first derive a simple estimate for the $y$-direction energy of the first mixed eigenfunction.
\begin{Lemma} \label{lem.lambda.vertical.lower}
	Let $T_h$ be the triangle $P_1P_2P_3$,
	where $P_1=(0,0)$, $P_2=(1,0)$ and $P_3=(a,h)$ lies
	in the first quadrant, where $0<a<1$ is fixed.
	Let $\lambda_h$ and $\nu_h$ be the eigenvalue and eigenfunction given in Lemma \ref{lem.5.b}, respectively.
	Then
	\begin{align}
		\int_{T_h} (\partial_y \nu_h)^2 dx dy \ge \frac{\pi^2}{4 h^2}.
	\end{align}
\end{Lemma}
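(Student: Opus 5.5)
The plan is to apply a one-dimensional Poincaré inequality along vertical slices of $T_h$ and then integrate in $x$.

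\textbf{Geometry.} Because $0<a<1$, the apex $P_3=(a,h)$ lies directly above the Dirichlet edge $P_1P_2$. So the triangle is the subgraph
\[
T_h=\{(x,y):0<x<1,\ 0<y<t(x)\},
\qquad
t(x)=\begin{cases} \dfrac{h x}{a}, & 0<x\le a,\\[1mm] \dfrac{h(1-x)}{1-a}, & a\le x<1,\end{cases}
\]
with $0<t(x)\le h$. Every vertical slice $\{x\}\times(0,t(x))$ therefore starts on the Dirichlet edge $P_1P_2$, where $\nu_h=0$, and ends on a Neumann edge. This is exactly where the hypothesis $0<a<1$ is used. For $a>1$, slices with $x\in(1,a)$ would touch only Neumann edges and the argument would fail.

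\textbf{One-dimensional inequality.} For $t>0$ and $\psi\in H^1(0,t)$ with $\psi(0)=0$, we have
\[
\int_0^t|\psi'|^2\,dy\ \ge\ \frac{\pi^2}{4t^2}\int_0^t|\psi|^2\,dy .
\]
The constant $\pi^2/(4t^2)$ is the first eigenvalue of $-\psi''$ on $(0,t)$ with Dirichlet condition at $0$ and Neumann condition at $t$, with eigenfunction $\sin(\pi y/(2t))$. Since $t(x)\le h$, each slice then gives
\[
\int_0^{t(x)}(\partial_y\nu_h)^2\,dy\ \ge\ \frac{\pi^2}{4h^2}\int_0^{t(x)}\nu_h^2\,dy .
\]

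\textbf{Regularity step.} To apply this slice by slice, I need that for a.e. $x\in(0,1)$ the function $y\mapsto\nu_h(x,y)$ is in $H^1(0,t(x))$ and vanishes at $y=0$. I will obtain this by approximating $\nu_h$ in $H^1(T_h)$ by functions that are smooth up to the boundary and vanish near $P_1P_2$. Such functions are dense in the space $\{\varphi\in H^1(T_h):\varphi=0 \text{ on } P_1P_2\}$, since $T_h$ is Lipschitz. For these approximants the slice inequality holds pointwise in $x$. Integrating in $x$ gives
\[
\int_{T_h}(\partial_y\varphi)^2\,dx\,dy\ \ge\ \frac{\pi^2}{4h^2}\int_{T_h}\varphi^2\,dx\,dy,
\]
and both sides are continuous in the $H^1$ norm, so the inequality passes to the limit $\varphi\to\nu_h$. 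Alternatively, one can use the standard fact that $H^1$ functions are absolutely continuous on a.e. line, together with Fubini and the trace.

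\textbf{Conclusion.} Applying the integrated inequality to $\varphi=\nu_h$ with $\|\nu_h\|_{L^2(T_h)}=1$ gives $\int_{T_h}(\partial_y\nu_h)^2\,dx\,dy\ge \pi^2/(4h^2)$.

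The only step needing care is the justification of the slice-wise Dirichlet condition, which the density argument above handles. The rest is elementary.
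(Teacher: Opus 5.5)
Your proof is correct and is essentially the paper's argument: a one-dimensional Poincaré inequality with the Dirichlet condition at $y=0$ on each vertical slice of height at most $h$, integrated over $x\in(0,1)$ and combined with $\|\nu_h\|_{L^2(T_h)}=1$. Your density (or absolute-continuity-on-lines) justification of the slice-wise Dirichlet condition is a detail the paper leaves implicit.
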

	\begin{proof} 
	For any fixed $x\in(0,1)$, the maximum height is $y_{\max}(x)\le h$.
	Since $\nu_h$ satisfies the Dirichlet condition on $P_1P_2$,
	applying the Poincaré inequality on the interval $[0,y_{\max}(x)]$,
	we have
	\begin{align}
		\int_0^{y_{\max}(x)} (\partial_y \nu_h)^2 dy \ge \frac{\pi^2}{4 y_{\max}(x)^2} \int_0^{y_{\max}(x)} \nu_h^2 dy \ge \frac{\pi^2}{4 h^2} \int_0^{y_{\max}(x)} \nu_h^2 dy.
	\end{align}
	Integrating this over $x \in (0,1)$ yields
	\begin{align}
		\int_{T_h} (\partial_y \nu_h)^2 dx dy \ge \frac{\pi^2}{4 h^2} \int_{T_h} \nu_h^2 dx dy = \frac{\pi^2}{4 h^2}.
	\end{align}
	\end{proof}

	The following result provides a simple upper bound for the second Neumann eigenvalue on a triangle in terms of its height $h$.
	\begin{Lemma}\label{lem.mu.height.upper}
	Let $T_h$ be the triangle $P_1P_2P_3$,
	where $P_1=(0,0)$, $P_2=(1,0)$ and $P_3=(a,h)$ lies
	in the first quadrant, where $a>0$ is fixed.
	Let $\mu_h$ be the second Neumann eigenvalue of $T_h$.
	Then
	\begin{align}
		\mu_h\leq \frac{18}{h^2}.
	\end{align}
	\end{Lemma}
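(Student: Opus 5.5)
The plan is to plug a single explicit test function into the variational characterization of $\mu_h$:
\[
\mu_h=\inf\Big\{\tfrac{\int_{T_h}|\nabla\varphi|^2}{\int_{T_h}\varphi^2}:\ \varphi\in H^1(T_h),\ \int_{T_h}\varphi=0,\ \varphi\not\equiv0\Big\}.
\]
Since the bound must not depend on $a$, I would use a function of $y$ alone, namely $\varphi(x,y)=y-\bar y$. Here $\bar y=h/3$ is the $y$-coordinate of the centroid of $T_h$. The key observation is that the horizontal cross-section of $T_h$ at height $y\in(0,h)$ has length $w(y)=1-y/h$ for every $a>0$, because $T_h$ is a shear of the triangle with $a=0$. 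Consequently every integral of a function of $y$ over $T_h$ reduces to $\int_0^h w(y)(\cdot)\,dy$, which does not involve $a$.

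The steps, in order, are as follows.

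First, check admissibility. Substituting $y=ht$ gives
\[
\int_{T_h}\varphi=h^2\int_0^1(1-t)\big(t-\tfrac13\big)\,dt=0,
\]
so $\varphi$ has mean zero.

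Second, compute the energy. Since $|\nabla\varphi|=1$, we get $\int_{T_h}|\nabla\varphi|^2=|T_h|=h/2$.

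Third, compute the mass:
\[
\int_{T_h}\varphi^2=h^3\int_0^1(1-t)\big(t-\tfrac13\big)^2\,dt=\frac{h^3}{36}.
\]
This follows from expanding the integrand as $-t^3+\tfrac53t^2-\tfrac79t+\tfrac19$ and integrating term by term, which gives $-\tfrac14+\tfrac59-\tfrac7{18}+\tfrac19=\tfrac1{36}$.

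The Rayleigh quotient of $\varphi$ is therefore $(h/2)/(h^3/36)=18/h^2$, and so $\mu_h\le 18/h^2$.

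There is no real obstacle here. The only point requiring care is to confirm that the cross-sectional width is exactly $1-y/h$ for all $a>0$, including $a>1$, where the triangle leans past $P_2$. This holds because at height $y$ the left edge is at $x=ay/h$ and the right edge is at $x=1+(a-1)y/h$, and their difference is $1-y/h$ regardless of $a$. The elementary integrals are then the only remaining computation.
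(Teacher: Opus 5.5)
Your proof is correct and matches the paper's: the same test function $\varphi=y-h/3$, with energy $h/2$ and mass $h^3/36$, giving the Rayleigh quotient $18/h^2$. Your shear observation, that the width at height $y$ is $1-y/h$ for every $a>0$, spells out the ``direct calculation'' that the paper leaves implicit.
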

	\begin{proof}
		Let $\varphi =y-\frac{h}{3}$. A direct calculation gives $\int_{T_h}\varphi dxdy=0$,
		and hence $\varphi$ is an admissible test function.
		Moreover, a direct integration gives
		\begin{align}
			\int_{T_h}|\nabla \varphi|^2dxdy=\frac{h}{2},
			\qquad
			\int_{T_h}\varphi^2 dxdy=\frac{h^3}{36}.
		\end{align}
		Therefore, the Rayleigh principle yields
		\begin{align}
			\mu_h
			\leq
			\frac{\displaystyle\int_{T_h}|\nabla \varphi|^2 dxdy}
			{\displaystyle\int_{T_h}\varphi^2 dxdy}
			=\frac{18}{h^2}.
		\end{align}
	\end{proof}

	We establish transversality at the intersection of $\lambda_h$ and $\mu_h$.
	\begin{Lemma}\label{lem.unique.crossing}
	Let $T_h$ be the  non-equilateral triangle $P_1P_2P_3$,
	where $P_1=(0,0)$, $P_2=(1,0)$ and $P_3=(a,h)$, where
	$0<a<1$ is fixed. Let $\mu_h$ and $\lambda_h$ be the eigenvalues given in
	Lemmas \ref{lem.5.a} and \ref{lem.5.b}, respectively.
	If, for some $h_0>0$,
	\begin{align*}
		\mu_{h_0}=\lambda_{h_0},
	\end{align*}
	then
	\begin{align*}\label{eq.transverse.crossing}
		\frac{d\lambda_h}{dh}(h_0)
		<
		\frac{d\mu_h}{dh}(h_0).
	\end{align*}
	\end{Lemma}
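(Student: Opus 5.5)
The plan is to compare the two derivative formulas already available. By Lemma \ref{lem.5.a} and Lemma \ref{lem.5.b}, at $h=h_0$ we have
\[
\frac{d\lambda_h}{dh}(h_0)=-\frac{2}{h_0}\int_{T_{h_0}}(\partial_y\nu_{h_0})^2\,dxdy,\qquad
\frac{d\mu_h}{dh}(h_0)=-\frac{2}{h_0}\int_{T_{h_0}}(\partial_y\phi_{h_0})^2\,dxdy,
\]
with both eigenfunctions $L^2$-normalized. Here $\mu_{h_0}$ is simple because $T_{h_0}$ is non-equilateral, and $\lambda_{h_0}$ is simple as a first mixed eigenvalue, so both derivatives exist. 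The claim is therefore equivalent to the strict inequality
\[
\int_{T_{h_0}}(\partial_y\phi_{h_0})^2\,dxdy<\int_{T_{h_0}}(\partial_y\nu_{h_0})^2\,dxdy .
\]

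For the right-hand side I will use Lemma \ref{lem.lambda.vertical.lower}. In fact I will use the sharper weighted form coming from the same one-dimensional Dirichlet Poincar\'e inequality:
\[
\int_{T_{h_0}}(\partial_y\nu_{h_0})^2\,dxdy\ge\frac{\pi^2}{4}\int_{T_{h_0}}\frac{\nu_{h_0}^2}{y_{\max}(x)^2}\,dxdy\ge\frac{\pi^2}{4h_0^2},
\]
where $y_{\max}(x)$ is the vertical thickness of $T_{h_0}$ over $x$. For the left-hand side I will write
\[
\int(\partial_y\phi_{h_0})^2=\mu_{h_0}-\int(\partial_x\phi_{h_0})^2\le\mu_{h_0}=\lambda_{h_0}.
\]
I will then bound $\mu_{h_0}$ from above by two test functions. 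The first is the vertical one of Lemma \ref{lem.mu.height.upper}. The second is the horizontal function $x-\bar x$, which gives
\[
\mu_{h_0}\le\frac{18}{1-a+a^2}.
\]

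Next I will localize the crossing height. By \cite[Lemma 1]{Siudeja2015MZ} we have $\mu_h<\lambda_h$ for $h<\sqrt{(1-a+a^2)/3}$, and by Lemma \ref{lem.large} we have $\lambda_h<\mu_h$ for $h>\sqrt{\max\{2a-a^2,1-a^2\}}$. Hence every crossing satisfies
\[
\frac{1-a+a^2}{3}\le h_0^2\le\max\{2a-a^2,\,1-a^2\}\le 1 .
\]
On this window I will try to show that the upper bound for $\int(\partial_y\phi_{h_0})^2$ falls strictly below the lower bound for $\int(\partial_y\nu_{h_0})^2$.

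The main obstacle is that the crude bounds do not close. Indeed $\pi^2/(4h_0^2)$ is only about $2.47\,h_0^{-2}$, while the test-function bounds for $\mu$ are of size $18h_0^{-2}$ or $24$. What I would try first is to exploit $\mu_{h_0}=\lambda_{h_0}$ more sharply. Subtracting the two energy identities, the desired inequality is equivalent to
\[
\int(\partial_x\nu_{h_0})^2<\int(\partial_x\phi_{h_0})^2 .
\]
I will aim to bound $\int(\partial_x\phi_{h_0})^2$ from below using the horizontal monotonicity and nodal structure of the second Neumann eigenfunction of the triangle (\cite{Chen2026Invent}). I will bound $\int(\partial_x\nu_{h_0})^2$ from above using the weighted Poincar\'e bound above; near the mixed vertices $y_{\max}(x)$ is small, which forces $\nu_{h_0}$ to carry most of its energy in the $y$-direction. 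If this direct route fails, the fallback is to compare the horizontal energy fractions through the thin-triangle scaling $x=\xi$, $y=h\eta$ on the fixed triangle $T_1$, as in Lemmas \ref{lem.5.a}--\ref{lem.5.b}.
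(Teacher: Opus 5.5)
Your reduction is correct. By Lemmas \ref{lem.5.a} and \ref{lem.5.b}, the claim is equivalent to $E:=\int_{T_{h_0}}(\partial_y\phi_{h_0})^2\,dxdy<\int_{T_{h_0}}(\partial_y\nu_{h_0})^2\,dxdy$, and Lemma \ref{lem.lambda.vertical.lower} bounds the right-hand side below by $\pi^2/(4h_0^2)$.

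The gap is that you never obtain a usable upper bound on $E$. The bound $E\le\mu_{h_0}$ loses roughly a factor of eight:
\begin{itemize}
\item with $\mu_{h_0}\le 18/h_0^2$ you would need $18<\pi^2/4$;
\item with $\mu_{h_0}\le 18/(1-a+a^2)$ you would need $h_0^2<\pi^2(1-a+a^2)/72$, which is incompatible with your own window $h_0^2\ge(1-a+a^2)/3$.
\end{itemize}
The routes you list afterwards do not close this either:
\begin{itemize}
\item The weighted Poincar\'e inequality only gives $\int(\partial_x\nu_{h_0})^2\le\Lambda-\pi^2/(4h_0^2)$. Proving $\int(\partial_x\nu_{h_0})^2<\int(\partial_x\phi_{h_0})^2$ this way again requires $E<\pi^2/(4h_0^2)$, so you are back to the original problem.
\item Monotonicity and the nodal structure of $\phi_{h_0}$ give sign information. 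They do not give the quantitative statement that almost all of its Dirichlet energy is horizontal.
\item Thin-triangle asymptotics are unavailable, since $h_0\ge 1/2$ on the window.
\end{itemize}

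The missing idea is to use the crossing hypothesis structurally, not just to localize $h_0$. Put $\Lambda=\mu_{h_0}=\lambda_{h_0}$ and $\phi=\phi_{h_0}$.
\begin{itemize}
\item Because $\Lambda$ is the bottom of the mixed spectrum, the form $\mathcal Q(v,v)=\int|\nabla v|^2-\Lambda\int v^2$ is nonnegative on $\{v\in H^1(T_{h_0}):v=0\text{ on }P_1P_2\}$.
\item This space contains two functions built from the Neumann eigenfunction: $f=y\phi$ and $g=\partial_y\phi$. The latter vanishes on $P_1P_2$ by the Neumann condition.
\item Use $(\Delta+\Lambda)f=2g$, $\partial_nf=n_y\phi$ and $(\Delta+\Lambda)g=0$, together with $g=\tau_y\partial_\tau\phi$ and $\partial_ng=-n_y(\Lambda\phi+\partial_{\tau\tau}\phi)$ on each edge. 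The vertex terms vanish since $\nabla\phi\to0$ at the corners. One finds $\mathcal Q(f,f)=1$ and $\mathcal Q(f,g)=-2E-\mathcal Q(g,g)/\Lambda$.
\item Cauchy--Schwarz for $\mathcal Q$ then yields $2E\le\sqrt{\mathcal Q(g,g)}-\mathcal Q(g,g)/\Lambda\le\Lambda/4$, i.e.\ $E\le\Lambda/8$.
\end{itemize}
With this factor $1/8$ your vertical test-function bound closes: $E\le\frac{18}{8h_0^2}=\frac{9}{4h_0^2}<\frac{\pi^2}{4h_0^2}$, and the derivative formulas give the strict inequality. Without an estimate of this kind the proposal does not prove the lemma.
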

	\begin{proof}
	Let $T=T_{h_0}, \Lambda=\mu_{h_0}=\lambda_{h_0}$, 
	$\nu$ be the first mixed eigenfunction associated with $\Lambda$ satisfying $||\nu||_{L^2(T)}=1$, and
	$\phi$ to be the second Neumann eigenfunction associated with $\Lambda$ satisfying $||\phi||_{L^2(T)}=1$.
	We consider the following set
	\begin{align*}
		\mathcal S=
		\left\{v\in H^1(T):v=0\text{ on }P_1P_2\right\},
	\end{align*}
	and the symmetric bilinear form
	\begin{align*}
		\mathcal Q(u,v) =\int_T\nabla u\cdot\nabla v dxdy -\Lambda\int_Tuv dxdy.
	\end{align*}
	Since $\Lambda$ is the first eigenvalue of the mixed boundary value problem, we have
	\begin{align*}
		\mathcal Q(v,v)\geq0
		\qquad\text{for every }v\in\mathcal S.
	\end{align*}
	
	Let
	\begin{align*}
		f=y\phi,\quad g=\partial_y\phi,
		\quad
		E=\int_T(\partial_y\phi)^2 dxdy.
	\end{align*}
	Because $y=0$ on $P_1P_2$ and the
	Neumann boundary condition $\partial_y\phi=0$ on $P_1P_2$, both $f$ and $g$ belong to $\mathcal S$. Since $T$ is a convex triangle, standard Neumann regularity on convex polygons gives $\phi\in H^2(T)$, so $g=\partial_y\phi\in H^1(T)$. The boundary integrations below can be justified by deleting small neighborhoods of the vertices and passing to the limit; the Neumann corner expansion gives $\nabla\phi\to0$ at each vertex. The direct calculation yields the following result.
	\begin{equation} \label{eq.f.equation}
		(\Delta+\Lambda)f=2g,
	\end{equation}
	and $\partial_nf=n_y\phi$ holds on every edge, where $n_y$ denotes the $y$-component of $n$.
	Hence, by using
	the divergence theorem, we obtain
	\begin{align} \label{eq:q-exp}
		\mathcal Q(f,f)&=-\int_T f(\Delta f+\Lambda f)  dxdy +\int_{\partial T} f\partial_n f ds\\
		&=-2\int_Ty\phi \partial_y\phi dxdy
		+\int_{\partial T}yn_y\phi^2 ds \notag \\
		&=-\int_Ty  \partial_y(\phi^2) dxdy
		+\int_T\partial_y(y\phi^2) dxdy \notag\\
		&=\int_T\phi^2 dxdy=1. \notag
	\end{align}
	
	We next compute $\mathcal Q(g,g)$.
	Let $\tau$ denote the unit tangential vector along edge, so that $\{n, \tau\}$ forms an orthonormal coordinate system.
	Then $\partial_y=\tau_y\partial_{\tau}+n_y\partial_{n}$ holds on any edge,
	where $\tau_y$ denotes the $y$-components of $\tau$.
	On  every  edge, the Neumann boundary condition gives
	\begin{align} \label{eq:g-exp}
		g=\tau_y\partial_{\tau}\phi,
	\end{align}
	and $\partial_{\tau n}\phi=0$. Therefore, together with $\partial_{\tau\tau}\phi+\partial_{nn}\phi=-\Lambda \phi$, we have
	\begin{align}  \label{eq:par-n-g}
		\partial_{n}g = \partial_{y}\partial_n\phi
		=n_y\partial_{nn}\phi
		=-n_y \left(\Lambda\phi+\partial_{\tau\tau}\phi\right).
	\end{align}
	Since $(\Delta+\Lambda)g=0$, $n_y$ and $\tau_y$ are constant along any edge, 
	using \eqref{eq:q-exp}, \eqref{eq:g-exp} and \eqref{eq:par-n-g}, integration by parts gives
	\begin{equation}\label{eq:q-g-g}
	\begin{aligned} 
		\mathcal Q(g,g) &= \int_{\partial T}g \partial_{n}g ds  
			=- \int_{\partial T}  \tau_y n_y\partial_{\tau}\phi \left(\Lambda\phi+\partial_{\tau\tau}\phi\right)ds  \\
			&= -   \Lambda \int_{\partial T} \tau_y n_y\phi \partial_{\tau}\phi ds -
			  \int_{\partial T} \frac{1}{2}\tau_y n_y  \partial_{\tau} (\partial_{\tau}\phi)^2 ds\\
			&=-  \Lambda \int_{\partial T} \tau_y n_y \phi \partial_{\tau}\phi ds -
			\sum_{3\text{ edges } P_iP_j}
			 \frac{1}{2}\tau_y n_y (\partial_\tau \phi)^2 \Big|_{P_i}^{P_j} \\
			&= -  \Lambda \int_{\partial T} \tau_y n_y  \phi \partial_{\tau}\phi ds
			=-  \Lambda \int_{\partial T}n_y\phi g ds.
	\end{aligned}		
	\end{equation}
The endpoint terms in the preceding sum vanish because $\nabla\phi\to0$ at every vertex of the convex triangle.	
	
	We now compute $\mathcal Q(f,g)$.
	By using \eqref{eq.f.equation}, \eqref{eq:q-g-g},  $\partial_nf=n_y\phi$,  and the divergence theorem,  we
	obtain that
	\begin{align*}
		\mathcal Q(f,g)&= -\int_Tg(\Delta f+ \Lambda f) dxdy
		+\int_{\partial T}g \partial_n f ds \\
		&=-2\int_Tg^2 dxdy
		+\int_{\partial T} g n_y\phi  ds \\
		&=-2E-\frac{\mathcal Q(g,g)}{\Lambda}.
	\end{align*}

	The Cauchy--Schwarz inequality for the nonnegative form
	$\mathcal Q$ shows that
	\begin{align*}
		\left(2E+\frac{\mathcal Q(g,g)}{\Lambda}\right)^2
		=|\mathcal Q(f,g)|^2
		\le  \mathcal Q(f,f)\mathcal Q(g,g)=\mathcal Q(g,g).
	\end{align*}
	By using elementary inequality $\frac{\Lambda}{4}+\frac{x}{\Lambda}\ge \sqrt{x}$, we have
	\begin{align*}
		2E \le \sqrt{\mathcal Q(g,g)}-\frac{\mathcal Q(g,g)}{\Lambda} 
		\le \frac{\Lambda}{4}.
	\end{align*}
	Thus, we derive an estimate for the $y$-direction energy of $\phi$, 
	\begin{align}\label{eq.phi.vertical.bound}
		\int_T(\partial_y\phi)^2 dxdy
		=E\le \frac{\Lambda}{8}.
	\end{align}

	Finally, 
	combining estimate \eqref{eq.phi.vertical.bound}, Lemmas \ref{lem.lambda.vertical.lower} and \ref{lem.mu.height.upper}, we obtain
	\begin{align*}
		\int_T(\partial_y\phi)^2 dxdy \le  \frac{\Lambda}{8}
		\le \frac{9}{4h^2_0}
		<\frac{\pi^2}{4h^2_0}\le \int_T(\partial_y\nu)^2 dxdy.
	\end{align*}
	Lemmas \ref{lem.5.a} and \ref{lem.5.b} imply that
	\begin{align*}
		\frac{d\lambda_h}{dh}(h_0)
		=-\frac{2}{h_0}
		\int_T(\partial_y\nu)^2\,dxdy
		<-\frac{2}{h_0}
		\int_T(\partial_y\phi)^2\,dxdy
		=\frac{d\mu_h}{dh}(h_0).
	\end{align*}
	This completes the proof.
\end{proof}

Next we will give the proof of Theorem \ref{thm1.2}.

\begin{proof} [Proof of Theorem \ref{thm1.2}]
	Let $T=K_{a,h}\cap \mathbb{R}^2_+$  be the upper triangle, 
	and let $\mu^s_h$ and $\mu^a_h$ be the smallest symmetric eigenvalue
	and the smallest antisymmetric eigenvalue of $K_{a,h}$.
	Then $\mu^s_h$ equals the second Neumann eigenvalue $\mu(T)$, whereas $\mu^a_h$ equals the first mixed eigenvalue $\lambda^{P_1P_3}(T)$.

    (1) If $0<a<1$  and $a\ne \frac{1}{2}$, Lemma \ref{lem.large} shows that
    $\mu^a_h<\mu^s_h$ holds for sufficiently large $h$,
    and Lemma \ref{lem.comp} shows that
    $\mu^s_h<\mu^a_h$ holds for sufficiently small $h$.
    Hence, there exists at least one $\tilde{h}(a)$ such that
    $\mu^s_{\tilde{h}(a)}=\mu^a_{\tilde{h}(a)}$. Moreover,
    Lemma \ref{lem.unique.crossing} shows that $\mu^a_h$ crosses $\mu^s_h$ from above to below at the intersection point.
    Thus there exists exactly one critical height $\tilde{h}(a)$ such that $\mu^s_{\tilde{h}(a)}=\mu^a_{\tilde{h}(a)}$. If $a=\frac{1}{2}$, then $K_{a,h}$ reduces to a rhombus, $\widetilde h(1/2)=1/2$, by the result of  Proposition  \ref{rhombus},  the claim is trivial. 
   
    (2) If $a=1$, then $K_{a,h}$ reduces to an isosceles triangle, $\widetilde h(1)=1/\sqrt3$ and the claim is trivial. 
    
    (3) If $1<a<2$, Lemma \ref{lem.large} shows that
    $\mu^a_h<\mu^s_h$ holds for sufficiently large $h$,
    and Lemma \ref{lem.comp} shows that
    $\mu^s_h<\mu^a_h$ holds for sufficiently small $h$.
    Hence, there exist $h_0$ and $h_1$, such that
    $\mu^s_h<\mu^a_h$ holds for $h<h_0$,
 which means that the second Neumann eigenfunctions are symmetric about the $x$-axis,
    and $\mu^a_h<\mu^s_h$ holds for $h>h_1$,
 which means that the second Neumann eigenfunctions are antisymmetric about the $x$-axis.

    (4) If $a\ge2$, Lemma \ref{lem.large} shows that
    $\mu^a_h<\mu^s_h$ holds for all $h>0$, hence the second Neumann eigenfunctions are antisymmetric about the $x$-axis.

   Finally, we consider the non-vertex critical points of the second
Neumann eigenfunction.

Suppose first that $u$ is symmetric about the $x$-axis. Then its
restriction to the upper triangle
$T$ is a second Neumann eigenfunction of $T$. By the results on critical
points of second Neumann eigenfunctions on triangles
\cite{Chen2026Invent,Judge2020Ann}, such an eigenfunction has a
non-vertex critical point if and only if $T$ is acute and
non-superequilateral; moreover, this critical point lies in the
interior of the shortest edge of $T$. In the present situation, the
shortest edge cannot be the symmetry edge $P_1P_3$. Indeed, if
$P_1P_3$ were the shortest edge, the mixed eigenvalue ordering from \cite[Theorem~1.4]{Chen2026Invent}, together with \cite[Theorem~1.1]{Siudeja2016PAMS}, would give
\[
\mu_h^a=\lambda^{P_1P_3}(T)<\mu(T)=\mu_h^s,
\]
contrary to the fact that $u$ belongs to the symmetric branch.
Therefore, the critical point lies in the interior of the shorter
outer edge of $T$. By reflection across the $x$-axis, $u$ has two
non-vertex critical points on $K_{a,h}$, lying in the interiors of
the two shorter outer edges.

Suppose next that $u$ is antisymmetric about the $x$-axis. Then its
restriction to $T$ is the first eigenfunction of the mixed problem
with Dirichlet condition on $P_1P_3$ and Neumann conditions on
$P_1P_4\cup P_3P_4$. Thus $P_4$ is the Neumann vertex. By the
critical point results for the first mixed eigenfunction on triangles
\cite{Chen2026Invent,Hatcher2024PAMS,Li2026JFA}, there is no
non-vertex critical point when $\angle P_1P_4P_3$ is non-obtuse,
whereas, if $\angle P_1P_4P_3$ is obtuse and $T$ is non-isosceles,
there is exactly one such critical point in the upper triangle, and
it lies in the interior of the longer Neumann edge. Reflecting across
the $x$-axis, we conclude that $u$ has non-vertex critical points on
$K_{a,h}$ if and only if $T$ is non-isosceles and
$\angle P_1P_4P_3$ is obtuse; in this case, they lie in the interiors
of the two longer outer edges.
\end{proof}

\begin{Remark}

For $0<a<1$, the result of Siudeja \cite[Lemma 1]{Siudeja2015MZ} and
Lemma~\ref{lem.large} give
$\sqrt{(1-a+a^2)/3}\le \tilde{h}(a)\le\sqrt{\max\{2a-a^2,1-a^2\}}.$
\end{Remark}

In the proof of Theorem \ref{thm1.2}, we can also demonstrate the existence of multiple eigenvalues for kites.

\begin{Corollary}
In the kites of Theorem \ref{thm1.2},	when $1<a<2,$ there exists a height $\hat{h}(a)$ such that the multiplicity of the second Neumann eigenvalue is exactly two.
\end{Corollary}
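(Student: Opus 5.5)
The plan is to run an intermediate value argument on the difference of the two symmetry branches. For fixed $1<a<2$ set $\mu^s_h=\mu(T_h)$ and $\mu^a_h=\lambda^{P_1P_3}(T_h)$, where $T_h=K_{a,h}\cap\mathbb{R}^2_+$ is the upper triangle with vertices $(0,0),(1,0),(a,h)$. Define
\[
D(h)=\mu^a_h-\mu^s_h,\qquad h>0 .
\]
By the symmetry decomposition used in the proof of Theorem~\ref{thm1.2}, $\mu_2(K_{a,h})=\min\{\mu^s_h,\mu^a_h\}$. The proof then splits into three steps.

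\textbf{Step 1: signs of $D$ at the two ends.} Lemma~\ref{lem.comp}, which for $1<a<2$ rests on Lemmas~\ref{lem.small2}, \ref{lem.small3} and \ref{lem.god}, gives $D(h)>0$ for all small $h$. Lemma~\ref{lem.large} gives $D(h)<0$ for $h>\sqrt{\max\{0,2a-a^2,1-a^2\}}$.

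\textbf{Step 2: continuity of $D$ on $(0,\infty)$.} For $h$ in a compact subinterval, the triangles $T_h$ are obtained from $T_1$ by the linear map $(\xi,\eta)\mapsto(\xi,h\eta)$. As in Lemmas~\ref{lem.5.a} and \ref{lem.5.b}, the Rayleigh quotients on the fixed triangle $T_1$ are
\[
\frac{\int_{T_1}\bigl(|\partial_\xi\tilde\varphi|^2+h^{-2}|\partial_\eta\tilde\varphi|^2\bigr)}{\int_{T_1}\tilde\varphi^2}.
\]
This form depends continuously on $h$, and for $h'$ near $h$ the two quotients are comparable by factors $(h/h')^{\pm2}$. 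The constraint sets do not depend on $h$: the zero-mean condition transforms to zero mean on $T_1$, and the Dirichlet edge $P_1P_3$ stays at $\eta=0$. Min-max therefore gives
\[
\min\{1,(h/h')^{2}\}\,\mu_h\le \mu_{h'}\le \max\{1,(h/h')^{2}\}\,\mu_h,
\]
and the same bounds for $\lambda$. So both branches are continuous in $h$, and hence so is $D$.

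\textbf{Step 3: crossing and multiplicity.} By the intermediate value theorem there is $\hat h(a)$ with $D(\hat h(a))=0$, that is, $\mu^s=\mu^a=\mu_2(K_{a,\hat h(a)})$. At this height the eigenspace of $\mu_2$ contains a symmetric eigenfunction, the even reflection of the second Neumann eigenfunction of $T$, and an antisymmetric one, the odd reflection of the mixed eigenfunction. These are linearly independent, so the multiplicity is at least two.

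The main obstacle is showing the multiplicity is exactly two; for kites there is no lip-domain simplicity result to call on. We would bound each parity class separately. In the antisymmetric class, the first mixed eigenvalue $\lambda^{P_1P_3}(T)$ is simple, because its eigenfunction can be taken positive. In the symmetric class, the second Neumann eigenvalue of the triangle $T$ is simple by \cite{Judge2020Ann} (Corollary~5.4 there, or the simplicity result for non-equilateral triangles); $T$ is non-equilateral here since $a>1$. Because every eigenfunction splits into symmetric and antisymmetric parts, each contributing at most one dimension, the eigenspace has dimension exactly two.
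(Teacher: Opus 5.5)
Your argument is correct and follows the same route as the paper. You use Lemma~\ref{lem.comp} and Lemma~\ref{lem.large} for the sign change of $\mu^a_h-\mu^s_h$, then the intermediate value theorem, with the even and odd reflections giving two independent eigenfunctions at the crossing height. The only differences are that you make the continuity explicit via the scaling comparison on $T_1$, and you obtain $\dim\le 2$ from the parity splitting together with simplicity of each branch: the first mixed eigenvalue, and $\mu_2(T)$, which holds since $T$ is non-equilateral, indeed obtuse at $P_3$ when $a>1$. The paper instead simply cites \cite[Corollary~5.4]{Judge2020Ann} for that bound.
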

\begin{proof}
By Lemma~\ref{lem.comp}, $\mu_h^s<\mu_h^a$ for all sufficiently small $h$, whereas Lemma~\ref{lem.large} gives $\mu_h^a<\mu_h^s$ for all sufficiently large $h$. By continuity, there exists $\hat h(a)>0$ such that $\mu_{\hat h(a)}^s=\mu_{\hat h(a)}^a$. The two corresponding eigenfunctions have opposite symmetry and are therefore linearly independent, so the second eigenspace has dimension at least two. By \cite[Corollary~5.4]{Judge2020Ann}, its dimension is at most two. Hence the multiplicity is exactly two.
\end{proof}

 In Lemmas \ref{lem.5.a} and \ref{lem.5.b}, we show that the second Neumann eigenvalue
    and the first mixed eigenvalue are decreasing functions of the height.
    In Lemmas \ref{lem.large} and \ref{lem.comp}, we show that the ordering of the two eigenvalues reverses. Based on this,  we give the following conjecture.
    
\begin{Conjecture}
  The constants $h_0$ and $h_1$ in (3) of Theorem \ref{thm1.2} are equal.
\end{Conjecture}

\begin{flushleft}
Haiyun Deng\\
Department of Applied Mathematics, Nanjing Audit University,\\ Nanjing, 211815, China\\
\textit{e-mail:} \verb"hydeng@nau.edu.cn"
\end{flushleft}

\begin{flushleft}
Changfeng Gui\\
Department of Mathematics, University of Macau,\\ Taipa, Macau\\
\textit{e-mail:} \verb"changfenggui@um.edu.mo"
\end{flushleft}

\begin{flushleft}
Xuyong Jiang\\
Department of Mathematics, Changzhou University,\\ Changzhou, 213164, China\\
\textit{e-mail:} \verb"jiangxy@cczu.edu.cn"
\end{flushleft}

\begin{flushleft}
Xiaoping Yang\\
School of Mathematics, Nanjing University,\\ Nanjing, 210093, China\\
\textit{e-mail:} \verb"xpyang@nju.edu.cn"
\end{flushleft}

\begin{flushleft}
Ruofei Yao\\
School of Mathematics, South China University of Technology,\\ Guangzhou, 510641, China\\
\textit{e-mail:} \verb"ruofeiyaopde@gmail.com"
\end{flushleft}

\begin{flushleft}
Jun Zou\\
School of Mathematics, Nanjing University,\\ Nanjing, 210093, China\\
\textit{e-mail:} \verb"jun_zou@smail.nju.edu.cn"
\end{flushleft}

\end{document}